\theoremstyle{plain}
\newtheorem{theorem}{Theorem}[section]
\newtheorem{corollary}[theorem]{Corollary}
\newtheorem{lemma}[theorem]{Lemma}
\newtheorem{proposition}[theorem]{Proposition}
\theoremstyle{definition}
\newtheorem{definition}[theorem]{Definition}
\newtheorem{example}[theorem]{Example}
\theoremstyle{remark}
\newtheorem{remark}[theorem]{Remark}
\newcommand{\ZZ}{\ensuremath{{\mathbb Z}}}
\newcommand{\DD}{\ensuremath{{\mathbb D}}}
\newcommand{\CC}{\ensuremath{{\mathbb C}}}
\newcommand{\CW}{\ensuremath{{\widehat{\mathbb C}}}}
\newcommand{\RR}{\ensuremath{{\mathbb R}}}
\newcommand{\NN}{\ensuremath{{\mathbb N}}}
\newcommand{\PSL}{\ensuremath{PSL(2,\CC)}}
\newcommand{\R}{\ensuremath{{\mathcal R}}}
\newcommand{\A}{\ensuremath{{\mathcal A}}}
\newcommand{\M}{\ensuremath{{\mathcal M}}}
\newcommand{\Z}{\ensuremath{{\mathcal Z}}}
\newcommand{\Po}{\ensuremath{{\mathcal P}}}
\newcommand{\G}{\ensuremath{{\mathbf G}}}
\newcommand{\CP}{\ensuremath{{\mathbb{CP}}}}
\font\myfont=cmr10 at 12pt
\newcommand{\e}{{\text{\myfont e}}}
\newcommand{\zbar}{{\bar{z}}}
\newcommand{\abs}[1]{\left\lvert#1\right\rvert}
\renewcommand{\Re}[1]{{\mathfrak{Re}\left(#1\right)}}
\renewcommand{\bar}[1]{{\overline{#1}}}
\newcommand{\del}[2]{\frac{\partial #1}{\partial #2}}
\title[Classification of rational 1--forms on \CW, via their isotropy group]{Classification of rational differential forms on the Riemann sphere, via their isotropy group}
\author[A. Alvarez--Parrilla, M.E. Fr\'ias--Armenta, C. Yee--Romero]{Alvaro Alvarez--Parrilla}
\address{Grupo Alximia SA de CV, 
M\'exico}
\email{alvaro.uabc@gmail.com}
\author[]{Mart\'in Eduardo Fr\'ias--Armenta}
\address{Departamento de Matem\'aticas, 
Universidad de Sonora, Hermosillo, Sonora, M\'exico}
\email{martineduardofrias@gmail.com}
\author[]{Carlos Yee--Romero}
\address{Facultad de Ciencias, Universidad Aut\'onoma de Baja California, 
M\'exico}
\email{carlos.yee@uabc.edu.mx}
\date{November 2, 2017}
\begin{document}

\begin{abstract}
We classify the rational differential 1--forms with simple poles and simple zeros on the Riemann sphere 
according to their isotropy group;
when the 1--form has exactly two poles the isotropy group is 
isomorphic to $\CC^{*}$, namely $\{z\mapsto az\ \vert\ a\in\CC, a\neq0\}$, and when the 1--form has 
$k\geq 3$ poles the isotropy group is finite.
\\
In particular we show that all the finite subgroups of $\PSL$ are realizable as isotropy groups for a rational 
1--form on $\CW$. 
We also present local and global geometrical conditions for their classification.  
The classification result enables us to describe the moduli space of rational 1--forms with finite isotropy that 
have exactly $k$ simple poles and $k-2$ simple zeros on the Riemann sphere. 
Moreover, we provide sufficient (geometrical) conditions for when the 
1--forms are isochronous. 
\\
Concerning the recent work of J.C.~Langer, we reflect on the strong relationship between our work and his and 
provide a partial answer regarding polyhedral geometries that arise from rational quadratic differentials on the 
Riemann sphere.
\end{abstract}

\keywords{
Rational 1--forms, Isotropy groups, Rational vector fields, Isochronous field, Quadratic differentials
}

\subjclass{37F10, 51M20, 20H15, 51M15, 37F20}
\maketitle

\section{Introduction}

The study of meromorphic 1--forms dates back to N.~H.~Abel and B.~Riemann who classified them as first, 
second and third type according to their regularity: whether they are holomorphic, they have zero residue poles 
or they have non-zero residue poles \cite{Brieskorn}. Later on, F.~Klein (see \cite{Klein}), describes 
geometrically the integrals of meromorphic 1--forms, in his personal memories (\emph{The Klein Protokols}) he 
further presents images of tessellations of the Riemann sphere related to the 1--forms he studies.

More recently R.~S.~Kulkarni \cite{KULKARNI} treats pseudo--Riemannian space forms of positive constant 
sectional curvature and studied the subgroups of isotropy under the orthogonal transformations.
In \cite{Adem} A.~Adem et al.~consider the problem of characterization of  finite groups that act freely on 
products of spheres.

In \cite{Mucino2}, M.~E.~Frias and J.~Muci\~no--Raymundo, study quotient spaces of holomorphic 1--forms 
over the Riemann sphere under the action of different groups. 
One of the most important groups they consider is $\PSL$ since it is the group of automorphisms of the 
Riemann sphere.
Also some continuous subgroups of $\PSL$ that appear as 
isotropy groups of rational 1--forms are studied.

On the other hand, A.~Alvarez--Parrilla and J.~Muci\~no--Raymundo, see \cite{AlvarezMucino2}, while 
studying (complex) analytic 1--forms over the Riemann sphere that have $r$ zeros and either a pole of 
order $-(r+2)$ or an essential singularity (satisfying certain requirements) at $\infty\in\CW$, classify their 
isotropy subgroups; showing that exactly the cyclic groups $\ZZ_{s}$ appear as non--trivial isotropy groups.

In \cite{Julio}, J. Maga\~na shows that there are three equivalent complex structures on the space 
$\Omega^{1}(-s)$ of rational 1--forms on the sphere with exactly $s\geq2$ simple poles: coefficients, 
residues--poles and zeros--poles of the 1--forms 
(note that in the case of the characterization of $\eta\in\Omega^{1}(-s)$ by its zeros--poles, it is also 
necessary to specify the principal coefficient).
He proves 
that the subfamily of rational isochronous 1--forms 
$\mathcal{RI}\Omega^{1}(-s)$ is a $(3s-1)$--dimensional real analytic sub--manifold of $\Omega^{1}(-s)$. 
Since the complex Lie group $\PSL$ acts holomorphically on $\Omega^{1}(-s)$ with the action being proper 
for $s\geq3$, an understanding of the non--trivial isotropy groups for $\eta\in\Omega^{1}(-s)$ allows him to 
prove that $\mathcal{RI}\Omega^{1}(-s)/\PSL$ is a stratified manifold; with the singular orbits arising precisely 
from the 1--forms with non--trivial isotropy. 
Moreover, 
he also shows that every finite subgroup of $\PSL$ appears as an isotropy subgroup for some 
isochronous $\eta\in\mathcal{RI}\Omega^{1}(-s)$.

The Lie group $PSL(2,\CC)$ acting on the space of $1$-forms 
$\Omega^1\{\CC\}=\bigcup_{s}\Omega^1(-s)$ 
leaves invariant the residues and the associated metric.
A natural question is to consider the quotient space 
\\ \centerline{
$\Omega^1\{\CC\}/PSL(2,\CC)$}
and ask when the fiber is not 
$PSL(2,\CC)$.
A quotient space has singularities when the isotropy group is not the identity. 
Hence this last question is related to which 1--forms do not have trivial isotropy group, particularly which 
1--forms have finite isotropy group under the action of $PSL(2,\CC)$.

\medskip
\noindent
However the classification question for isotropy groups of $\eta\in\Omega^{1}\{\CC\}$, \emph{how do the finite 
subgroups of $\PSL$ realize as isotropy groups of rational 1--forms over the Riemann sphere} is still 
unanswered.

\medskip
\noindent
Since rational 1--forms with simple poles and simple zeros on the Riemann sphere are an open and dense\footnote{
The set of polynomials $P$ of degree at most $k$ with at least one multiple root can be characterized as the algebraic variety given by discriminant of $P$ and $P'$ equal to cero (the discriminant being an algebraic equation of the $k+1$ coefficients of $P$), which shows that this set is closed and not dense in the vector space (of dimension $k+1$) of polynomials of degree at most $k$. Thus the polynomials of degree at most $k$ with simple roots are an open and dense set in the vector space of polynomials of degree at most $k$.
\\
Considering now rational 1--forms, apply the above to the numerator and denominator.
} 
set in the (vector) space of rational 1--forms on the Riemann sphere, we shall from hereafter concern 
ourselves with rational 1--forms on the Riemann sphere with simple poles and simple zeros, unless we specify 
otherwise.
In this paper we:
\begin{enumerate}
\item Show that all finite subgroups of $PSL(2,\CC)$ are realizable  as isotropy groups of some 
$1$-form (not necessarily isochronous).
\item 
Classify the rational
$1$-forms that have finite isotropy group $G$.
This is done first, in Theorem \ref{caracterizacion1}, by considering the complex structure arising from the 
location of poles and zeros and requiring that the sets of poles and zeros be $G$--invariant. 
However since this is not enough, we provide an easy to check \emph{local--geometric condition} that states 
that every non--trivial element of the group $g\in G$ has two fixed points and that these must be poles or zeros 
of the $G$ invariant 1--form.

\noindent
From this main theorem we then prove classification theorems based on whether $G$ is: a platonic subgroup 
(Theorem \ref{main2}), a dihedral group (Theorem \ref{teoremaDihedrico} and Theorem \ref{D22}), or a cyclic 
group (Theorem \ref{Z2} and Theorem \ref{main3b}).

\item
We summarize the above theorems in terms of \emph{global--geometric conditions} as 
Corollary \ref{genclass}.

\noindent
The main result: \emph{the classification of rational 1--forms on the Riemann sphere with simple poles and simple zeros according to their isotropy group} follows immediately as Theorem \ref{mainresult}.

\item 
In \S\ref{sec:variedad} we describe the moduli space of rational 1--forms with finite isotropy that have exactly $k$ 
simple poles and $k-2$ simple zeros. This is done by placing $\ell_{1}$ zeros and $\ell_{2}$ poles in a 
quasi--fundamental region $\widehat{\R}_{G}$, where the quasi--fundamental region is a simply connected set 
containing one representative of those orbits that have the same number of elements as $G$, 
Theorem \ref{variedad}.

\item In Theorem \ref{sufficientIsochronous}, we provide sufficient conditions for 
isochronicity with finite isotropy. 

\item In \S\ref{secLanger} we reflect on the strong relationship between the work of J.~C.~Langer, 
see \cite{Langer},
and our work; partially answering a question on polyhedral geometries associated to rational quadratic 
differentials, Proposition \ref{geomPolyIsocrono}.

\item In \S\ref{ejemplos}, we provide interesting and beautiful examples of realizations of 1--forms with finite 
isotropy group $G$ for each finite subgroup of $\PSL$: in \S\ref{a4} the case of $A_{4}$ is considered, 
in \S\ref{s4} the case of $S_{4}$ is presented, in \S\ref{a5} the case of $A_{5}$, and finally in \S\ref{cyclic} 
and \S\ref{dihedric} the cases of the cyclic and dihedral groups, respectively, are given.

\end{enumerate}

As a first observation, note that rational 1--forms $\eta$, with finite isotropy group, must have at least one zero 
and three poles (by Gauss--Bonnet $Card(\Po_{\eta})-Card(\Z_{\eta})$ $=2$, where 
$\Po_{\eta}$ and $\Z_{\eta}$ are the set of poles and set of zeros of $\eta$ respectively), otherwise the 
isotropy group is continuous.
Also, given a 1--form  
$\eta\in\Omega^{1}(-s)$ and a subgroup $G<\PSL$, 
the two obvious and natural conditions are that $\Po_{\eta}$ and $\Z_{\eta}$  be invariant under 
the action of $G<\PSL$. These conditions are necessary but not sufficient for invariance of 
the 1--form $\eta$.

\noindent
In fact there is an additional obstruction which can be observed in the following examples.

\begin{example}\label{ejemplo1}
Consider the  case of the cyclic group $\ZZ_4$ generated by $T(z) = i z$. 
Then the 1--form:
$$\eta=f(z)dz= \lambda\ \frac{z\ dz}{(z-1)(z-i)(z+1)(z+i)},$$   
has poles at $1$,$-1$, $i$ and $-i$, and has a  zeros at infinity and origin.
It is clear that $T$ fixes the set of poles and the set of zeros of $\eta$.
The  push--forward  of $\eta$ via $T$ is:
$$T_{*}\eta=\lambda\ \frac{-w\ dw}{(w-1)(w-i)(w+1)(w+i)}.$$ 
We observe that  
$T_{*}\eta=-\eta$, hence $T^{2}(z)=-z$ and of course $T^{2}_{*}\eta=\eta$, so the isotropy group of $\eta$ is $\ZZ_{2}$ generated by $T^{2}$.
\end{example}

\begin{example}\label{ejemplo2}
Another example is 
$$\eta=\frac{(z^2-4) dz}{z^4-1}.$$ 
Here the set of zeros and the set of poles are both invariant under the group $\ZZ_{2}$ generated by 
$T(z)=-z$, but $\eta$
has trivial isotropy group.
\end{example}

These two examples are the minimum (in terms of the number of poles) such that hypothesis 1 and 2 of 
Theorem \ref{caracterizacion1} are satisfied (the set of poles and zeros are invariant), but condition 3  of 
Theorem \ref{caracterizacion1} is not satisfied: in 
Example \ref{ejemplo1}, $T(z)=i z$ is of order 4 and fixes a zero 
(since $T$ is of order $\geq3$, the fixed points of $T$ should be poles);
in Example \ref{ejemplo2}, $T(z)=-z$ is of order 2, but 
$0,\infty\in\CW$ 
are fixed points of $T$ that are not zeros or poles of $\eta$.

\begin{example}\label{ejemplo3}
A non--trivial example is 
$$\eta=\frac{(z^3-27)(z^3-1/27) dz}{z(z^3-8)(z^3-1/8)}.$$ 
Here the set of zeros and the set of poles are both invariant under the group $\DD_{3}$ generated by 
$\{T_1(z)=1/z,T_2(z)=e^{2\pi i/3}z\}$, but $\eta$ has  isotropy group $\ZZ_3$.
Of course condition 3 of Theorem \ref{caracterizacion1} is also satisfied with $G=\ZZ_{3}$, 
but not with $G=\DD_{3}$.
\end{example}


\section{Background}
\smallskip
\noindent
First recall the classification of the finite subgroups of $PSL(2,\mathbb{C})$ (see for instance \cite{Klein1}, 
\cite{Toth} chapter 1 and \cite{Du}).
The resulting possible cases are the conjugacy classes (in $\PSL$) of the: 
\begin{itemize}
\item group of isometries of the tetrahedron, isomorphic to $A_4$, 
\item group of isometries of the cube, or of the octahedron, both isomorphic to $S_4$,
\item group of isometries of the dodecahedron, or of the icosahedron, both isomorphic to $A_5$,
\item cyclic groups ($\ZZ_{n}$, $n>1$) and
\item dihedral groups ($\DD_{n}$, $n>1$).
\end{itemize}

On the other hand, the
notion of {\it center} goes back to Poincar\'{e} (see \cite{Po}). He
defined it for differential systems on the real plane; 
{\it i.e.} given a vector field $X$, a \emph{center for $X$} is a
singular point surrounded by a neighborhood filled by closed
orbits of $X$ with the unique exception of the singular point.
An \emph{isochronous center} is a center all of whose orbits have the same period. 
In particular for the case at hand, that is for complex analytic vector fields $X$, a simple \emph{flow--box} 
argument shows that if $X$ has a center then it is an isochronous center. 

The centers and particularly the isochronous centers have been studied widely in Hamiltonian systems, see for 
example \cite{CLV2}, \cite{Llibre}; in holomorphic systems see for example \cite{Mucino1}, \cite{Mucino2}; and for a wide 
survey see \cite{Chavarriga-Sabatini}. 

An \emph{isochronous field} 
is a vector field such that all the zeros are centers. 
We shall say that \emph{a 1--form is isochronous} if its associated vector field (given by the 
correspondence \eqref{correspondencia} below) is isochronous.
In \cite{Mucino1} the isochronous fields arising from polynomial fields are classified and studied.

On the Riemann sphere infinity is a regular point, thus meromorphic is equivalent to (complex) analytic  
\cite{AlvarezMucino1}, \cite{Mucino1}, \cite{Mucino2}. Rational functions have only zeros and poles, so in our context 
(complex) analytic, meromorphic and rational \emph{functions} are all equivalent.

\noindent
Moreover, as is explained in \cite{AlvarezMucino1}, \cite{AlvarezMucino2}, 
on any Riemann surface $M$ there is a one to one canonical correspondence between:
\begin{enumerate}
\item 
Singular analytic \emph{vector fields} $X=f\del{}{z}$.
\item 
Singular analytic \emph{differential forms} $\omega=\frac{dz}{f}$.
\item 
Global singular analytic (additively automorphic, probably multivalued) 
\emph{distinguished parameters (functions)}\\
\centerline{ $
\Psi (z)= \int^z \omega .
$}
\end{enumerate}

\noindent
This correspondence can be represented by the following diagram 
(see  \cite{AlvarezMucino1}, \cite{AlvarezMucino2}, \cite{Mucino1}, \cite{Mucino3} for the complete details of the diagram 
and further correspondences):
\medskip
\begin{equation}\label{correspondencia}
\begin{array}{cccc}
&\qquad \omega_{X}=\frac{dz}{f}  \\
&\qquad \qquad \qquad \qquad \nwarrow \searrow \\
& \downarrow \uparrow &  X = f \del{}{z}, \\ 
&\qquad \qquad \qquad \qquad \nearrow \swarrow \\
&\qquad \Psi_X(z)= \int^z \omega_X 
\end{array}
\end{equation}

\medskip
\noindent
where the subindex $X$ recalls the dependence on the original vector field, 
which we omit when it is unnecessary. 

In terms of pullbacks and push--forwards, if $T\in PSL(2,\mathbb{C})$ and $X$ is the singular analytic vector 
field associated to the 1--form $\omega_{X}$ then $T^*X$ is the singular analytic vector field corresponding to 
the 1--form $T_*\omega_{X}$, see \cite{AlvarezMucino1}, \cite{Mucino1}, \cite{Mucino2}. 
In fact,  as shown in \cite{AlvarezMucino1}:

\noindent
\emph{`Every singular analytic vector field $X$ on $M$ can be expressed as the pullback, 
via certain singular analytic probably multivalued maps $\Psi$ and $\Phi$, of the 
simplest analytic vector fields $\del{}{t}$ or $-w\del{}{w}$ on 
the Riemann sphere $\CW$.'} 

\smallskip
\noindent 
In other words the following commutative diagram holds true
\begin{center}
\begin{picture}(210,55)
\put(-75,20){\vbox{\begin{equation}\label{diagrama-basico}\end{equation}}}

\put(17,8){$ (\CW,\del{}{t}) $}
\put(50,12){\vector(1,0){80}}
\put(73,0){$\exp(-t)$}

\put(135,8){$ (\CW,-w\del{}{w}) $,}
\put(122,34){$\Phi $}
\put(113,42){\vector(1,-1){20}}

\put(76,45){$(M,X)$}
\put(52,34){$\Psi$}
\put(70,42){\vector(-1,-1){20}}

\end{picture}
\end{center}

\noindent
where $\Phi = \exp \circ (-\Psi)$. 
In the language of differential equations:
\begin{enumerate}

\item [$\bullet$]
$X=\Psi^{*}(\del{}{t})$ means that $X$ has 
a \emph{global flow--box}, {\it i.e.} the local rectifiability can be continued 
analytically to $M$ minus the singular set of $X$.

\item [$\bullet$]
$X=\Phi^{*}(-w\del{}{w})$ states that $X$ is the  \emph{global Newton vector field 
of $\Phi$}, 
{\it i.e.} $X$ has sinks exactly at the zeros of $\Phi$.
\end{enumerate} 

\noindent
In particular, the fact that every singular analytic vector field $X$ is a global Newton vector field, is used 
to visualize $X$, 
and hence the associated 1--form $\omega_{X}$, see 
\cite{AlvarezMucino1}, \cite{Alvaro-Carlos-Selene-Jesus} for further details. 
\begin{remark}
Because of the duality between vector fields $X$ and the associated 1--form $\omega_{X}$, 
the poles of $X$ are the zeros of $\omega_{X}$, and the zeros of $X$ are the poles of $\omega_{X}$. 
In this work we will agree to speak of poles and zeros of the \emph{1--form} unless explicitly stated.
\end{remark}

\section{Classification of rational 1--forms on the Riemann sphere with simple poles and simple zeros according to their isotropy group}
It is clear that the number $k$ of poles of a rational 1--form on the Riemann sphere is at least 2.

\begin{remark}\label{separacionviak}
1. Any rational 1--form that has exactly 2 poles on the Riemann sphere is conjugate, via an element of $\PSL$, 
to $\eta=\frac{\lambda}{z}\,dz$, for some $\lambda\in\CC^{*}$, and thus its isotropy group is isomorphic 
to $\CC^{*}=\{z\mapsto az\ \vert\ a\in\CC^{*}\}$, see \cite{Julio} lemma 3.17 pp.~44.

\noindent
2. The rational 1--forms that have at least 3 poles have finite 
isotropy group, see \cite{Julio} corollary 3.6 pp~34 
(the idea being that an element $g$ of the isotropy group will permute the poles of the 1--form, since the 1--form has a finite number of poles the result follows). 
\end{remark}

Hence, in what follows we shall classify the rational 1--forms whose isotropy group are non--trivial, in the understanding that any other rational 1--form with at least 3 poles has trivial isotropy group.

\subsection{Local--geometric characterization of rational 1--forms with finite isotropy}
In this section we provide analytical classification results for all possible rational 1--forms with simple poles and simple zeros on $\CW$ that have non--trivial finite isotropy groups. 

As previously mentioned, $\mathcal{P}_{\eta}$ and $\mathcal{Z}_{\eta}$ are, respectively, the set of poles and the set of zeros of the 1--form $\eta$.

\begin{lemma}\label{afirmacion0}
If $T\in\PSL$ has invariant set with cardinality at least 3, then $T$ is an elliptic transformation.
\\
In particular if $\eta$ is a rational 1-form and $T\in\PSL$ leaves invariant the 1--form, then $T$ is an elliptic 
transformation.
\end{lemma}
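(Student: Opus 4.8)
The plan is to recall the standard classification of nontrivial elements of $\PSL$ into elliptic, parabolic, and loxodromic (hyperbolic) types, and then to rule out the parabolic and loxodromic cases under the hypothesis that $T$ has an invariant set of cardinality at least $3$. First I would normalize: after conjugating by an element of $\PSL$ (which does not change the conjugacy type and merely moves the invariant set), a parabolic transformation may be taken to be $z\mapsto z+1$ with unique fixed point $\infty$, and a loxodromic (including hyperbolic) transformation may be taken to be $z\mapsto \mu z$ with $|\mu|\neq 1$ (or $\mu$ not a root of unity), with fixed points $0$ and $\infty$.

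Next I would argue that in each of these two cases, any finite invariant set must be contained in the fixed point set, which has at most $2$ elements, contradicting the cardinality hypothesis. For the parabolic case $z\mapsto z+1$: if $S$ is a finite invariant set and $p\in S$ with $p\neq\infty$, then $p, p+1, p+2,\dots$ are all in $S$, giving infinitely many points, so $S\subseteq\{\infty\}$. For the loxodromic case $z\mapsto\mu z$: if $p\in S$ with $p\notin\{0,\infty\}$, then $\mu^n p$ for $n\in\ZZ$ are all in $S$; since $|\mu|\neq 1$ these are infinitely many distinct points, again forcing $S\subseteq\{0,\infty\}$. Hence $|S|\leq 2$ in both cases, so an invariant set with at least $3$ elements forces $T$ to be elliptic. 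An elliptic $T$ is conjugate to $z\mapsto e^{i\theta}z$ and, being of finite or infinite order on a circle, genuinely can have large (even infinite) invariant sets, so no contradiction arises there.

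For the "in particular" clause, I would observe that if $T$ leaves the rational $1$-form $\eta$ invariant, then $T$ permutes $\mathcal{P}_\eta$ and permutes $\mathcal{Z}_\eta$; since $\eta$ has at least two poles, and (by the Gauss–Bonnet count $\operatorname{Card}(\mathcal{P}_\eta)-\operatorname{Card}(\mathcal{Z}_\eta)=2$ noted in the introduction) the total number of poles and zeros is at least $2$, I need the union $\mathcal{P}_\eta\cup\mathcal{Z}_\eta$ to have cardinality at least $3$. This holds unless $\eta$ has exactly two poles and no zeros, which is exactly the excluded continuous-isotropy case ($\eta$ conjugate to $\frac{\lambda}{z}dz$ with isotropy $\CC^*$); in the remaining cases $\mathcal{P}_\eta\cup\mathcal{Z}_\eta$ is a $T$-invariant set of cardinality $\geq 3$, and the first part applies. (If one wants the statement literally for \emph{every} invariance-leaving $T$, including the two-pole case, one notes that $z\mapsto az$ is elliptic only when $|a|=1$, so the clause should be read as: the finite-isotropy elements are elliptic; I would state this carefully or restrict to the $k\geq 3$ setting consistent with Remark~\ref{separacionviak}.)

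The main obstacle is not the elimination of parabolic/loxodromic elements — that is a short orbit-growth argument — but rather pinning down the precise hypothesis in the "in particular" part so that the invariant set really does have at least three points; this requires invoking the Gauss–Bonnet relation and the two-pole normal form from Remark~\ref{separacionviak} to confirm that, outside the continuous-isotropy case, $\mathcal{P}_\eta\cup\mathcal{Z}_\eta$ is large enough. Everything else is routine normalization in $\PSL$.
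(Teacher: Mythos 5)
Your proposal is correct and takes essentially the same route as the paper: non-elliptic elements of $\PSL$ are conjugates of translations or homothecies, whose finite invariant sets have at most two points, while invariance of $\eta$ makes the finite set of poles (and zeros) a $T$--invariant set of cardinality at least $3$. Your explicit orbit-growth argument and your caveat about the two-pole case (where the isotropy is $\CC^{*}$ and the statement must be read in the $k\geq 3$ setting of Remark~\ref{separacionviak}) merely spell out what the paper's proof leaves implicit.
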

\begin{proof}
If $T$ is a homothecy, a translation (or a conjugate of either), then it's invariant set has at most two elements 
in $\CW$. On the other hand since $T_{*}\eta=\eta$ then $\mathcal{P}_{\eta}$ must be $T$--invariant. 
Thus $Card(\mathcal{P}_{\eta})\leq 2$, contradiction. Hence $T$ must be an elliptic transformation. 
\end{proof}

The following is a very simple result which will be useful.
\begin{lemma}\label{pushForma}
Let $\eta$ be a 1--form whose isotropy group is $G<\PSL$. If $T\in\PSL$ then $T_{*}\eta$ has isotropy group 
$T G T^{-1}$.
\end{lemma}
\begin{proof}
Let $g\in G$, then a simple calculation using the fact that $g_{*}\eta=\eta$ shows that 
\begin{equation}
\label{pullbackdeconjugacion}
(TgT^{-1})_{*}(T_{*}\eta)=T_{*} g_{*} T^{-1}_{*}(T_*\eta)=T_*\eta.
\end{equation}
 
\end{proof}

\begin{remark}\label{todaselipticas}
Given a non--trivial $g\in G$, by Lemma \ref{afirmacion0}, $g$ must be an 
elliptic transformation. Denote the order of $g$ by $k\geq 2$.
There exists $T\in \PSL$ such that $T(x)=0$, $T(y)=\infty$ where $\{x,y\}$ are the fixed  points of $g$, hence 
$\widehat{g}=T g T^{-1}$ fixes $\{0,\infty\}\subset\CW$, in fact $\widehat{g}(z)= \e^{i 2\pi/k}z$ 
(in fact, there are an infinitude of such $T\in\PSL$: three points completely determine a unique $T\in\PSL$).
\end{remark}

\begin{theorem}[Characterization of rational 1--forms with finite isotropy]\label{caracterizacion1}
\hfill\\
Let $G$ be a finite subgroup of $\PSL$, and  let $\eta$ be a $1$-form with simple poles and zeros. 
\\
$\eta$ is $G$--invariant if and only if the following three conditions are met:

\begin{enumerate}
\item[1)] $\Po_{\eta}$ is $G$--invariant.

\item[2)] $\Z_{\eta}$ is $G$--invariant.

\item[3)] For each non--trivial $g\in G$, let $\{x,y\}$ be the set of fixed points of $g$. 
One of the next statements is satisfied:
\begin{enumerate}
\item[a)] $g^2$ is the identity and $\{x,y\}\subset \Po_{\eta}\cup \Z_{\eta}$,
\item[b)] $g$ is of order greater than 2 and $\{x,y\}\subset \Po_{\eta}$.
\end{enumerate}
\end{enumerate}
Moreover, $G$ is the maximal group, as a subgroup of $\PSL$, satisfying (1)--(3) if and only if  $\eta$ has 
isotropy $G$.  
\end{theorem}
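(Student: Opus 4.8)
The strategy is to use the canonical correspondence \eqref{correspondencia} together with the normal form for elliptic transformations (Remark \ref{todaselipticas}) to reduce the invariance of $\eta$ under a single elliptic element $g$ to an explicit computation at the two fixed points of $g$. Write $\eta = f(z)\,dz$ with $f$ rational, having simple zeros at $\Z_\eta$ and simple poles at $\Po_\eta$; the associated vector field is $X = (1/f)\,\partial/\partial z$. For the forward implication, assume $\eta$ is $G$--invariant, i.e. $g_*\eta = \eta$ for all $g\in G$. Conditions (1) and (2) are immediate since a biholomorphism carries poles to poles and zeros to zeros of the same order, and these are exactly the remarks already made before the theorem. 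For condition (3), fix a non--trivial $g\in G$ of order $k\ge 2$ with fixed points $\{x,y\}$; by Lemma \ref{pushForma} we may conjugate by the $T\in\PSL$ of Remark \ref{todaselipticas} so that $\widehat g(z) = \zeta z$ with $\zeta = \e^{2\pi i/k}$ and the fixed points are $0,\infty$. Invariance $\widehat g_*(T_*\eta) = T_*\eta$ says, for $\tilde\eta = \tilde f(z)\,dz := T_*\eta$, that $\tilde f(\zeta^{-1} w)\cdot \zeta^{-1} = \tilde f(w)$, i.e. $\tilde f(w) = \zeta^{-1}\tilde f(\zeta^{-1} w)$. Examining the Laurent expansion of $\tilde f$ at $w=0$: if $\tilde f(w) = \sum_{n\ge m} a_n w^n$ then the functional equation forces $a_n = 0$ unless $n \equiv -1 \pmod k$. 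Thus the order of vanishing (or pole order) $m$ of $\tilde\eta$ at $0$ satisfies $m \equiv -1 \pmod k$; since $\eta$ has only simple zeros and simple poles, $m \in \{-1, 1\}$ (the residue $a_{-1}$ being unconstrained; any other admissible $m$ would be $\ge k-1 \ge 1$ with equality only when $k=2$). If $k\ge 3$ then $m = -1$ is forced, so $0$ — hence $x$ — is a simple pole of $\eta$; this is case 3(b). If $k = 2$ then $m \in \{-1,1\}$, so $x\in\Po_\eta\cup\Z_\eta$; this is case 3(a). The same argument at $\infty$ handles $y$.

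For the converse, assume (1)--(3) hold and fix a non--trivial $g\in G$ of order $k$; we must show $g_*\eta = \eta$. After the conjugation of Remark \ref{todaselipticas} it suffices to show $\widehat g_* \tilde\eta = \tilde\eta$ where $\widehat g(z) = \zeta z$ and $\tilde\eta = \tilde f\,dz$. Now $\widehat g_*\tilde\eta$ is again a rational $1$--form with the same principal parts structure, and $\widehat g_*\tilde\eta / \tilde\eta$ is a rational function on $\CW$. Conditions (1) and (2), applied to the conjugated data, say the divisor of $\tilde\eta$ is $\widehat g$--invariant, so this ratio has neither zeros nor poles anywhere on $\CW$ — it is a nonzero constant $c$. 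To pin down $c = 1$ we evaluate at a fixed point of $\widehat g$, say $0$: if $0$ is a simple pole of $\tilde\eta$ with residue $r\ne 0$, then $\widehat g_*\tilde\eta$ also has a simple pole at $0$ with residue $r$ (residues are preserved by push--forward under a Möbius map fixing the pole, since they are intrinsic), forcing $c = 1$. This is exactly why condition 3(b) is needed when $k\ge 3$: then one of the fixed points must be a (simple) pole, giving us a residue to compare. When $k = 2$, condition 3(a) allows the fixed point to be a zero instead; in that case one compares the leading Taylor coefficients, and the constraint is that $\widehat g = -\mathrm{id}$ satisfies $\widehat g^2 = \mathrm{id}$, so $c^2 = 1$, and a direct check of the sign using that a simple zero at $0$ contributes a factor $(-1)$ from $d(\zeta^{-1}w)$ together with $(-1)$ from $\tilde f(\zeta^{-1}w)$ at leading order (with $\zeta = -1$) gives $c = +1$. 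Since $g$ was an arbitrary non--trivial element and the trivial element acts trivially, $\eta$ is $G$--invariant. (One subtlety: if $G$ has two distinct non--trivial elements sharing a fixed point, the conditions must be simultaneously satisfiable; but they are, since condition (3) is imposed for every $g$ and the divisor conditions (1)--(2) are about the full group.)

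For the maximality statement: the isotropy group $\mathrm{Iso}(\eta) = \{T\in\PSL : T_*\eta = \eta\}$ is a subgroup of $\PSL$, and by Remark \ref{separacionviak} it is finite once $\eta$ has at least $3$ poles (which it does, since a finite $G$ with a non--trivial element of order $\ge 2$ forces at least one fixed point to be a pole, and Gauss--Bonnet then forces $\ge 3$ poles — as noted in the introduction). By the equivalence just proved, a finite subgroup $H<\PSL$ satisfies (1)--(3) for $\eta$ if and only if $H < \mathrm{Iso}(\eta)$; conversely $\mathrm{Iso}(\eta)$ itself is finite and satisfies (1)--(3). Hence $\mathrm{Iso}(\eta)$ is the unique maximal subgroup of $\PSL$ satisfying (1)--(3), and $G$ satisfies (1)--(3) maximally if and only if $G = \mathrm{Iso}(\eta)$.

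**Main obstacle.** The delicate point is the converse direction, specifically nailing the constant $c = 1$ rather than merely $|c| = 1$ or $c$ a root of unity. The clean argument is the residue comparison at a fixed point that is a pole, which is why condition 3(b) is exactly the right hypothesis for $k\ge 3$; the order--$2$ case 3(a) requires the small extra sign computation because a zero (not a pole) may be the only fixed point available, and one must verify the two sign contributions cancel. A secondary but routine technicality is making sure the Laurent--coefficient argument in the forward direction correctly excludes multiple zeros/poles at fixed points — this is where the standing hypothesis of simple poles and simple zeros is used, and it is what makes $m\in\{-1,1\}$ the only options.
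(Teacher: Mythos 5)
Your proof of the main equivalence is correct, but it takes a genuinely different route from the paper's. For ($\Rightarrow$) the paper writes $T_*\eta$ globally as a product over $\widehat{g}$--orbits (equation \eqref{Tforma}), applies Gauss--Bonnet in the form $k(\ell_1-\ell_2)+d_1+d_2=2$, and runs a case analysis on $GB\in\{0,\dots,4\}$, discarding the impossible cases by computing $\widehat{g}_*T_*\eta=\e^{i\theta}T_*\eta\neq T_*\eta$; your argument is purely local: the functional equation $\tilde f(w)=\zeta^{-1}\tilde f(\zeta^{-1}w)$ kills every Laurent coefficient $a_n$ with $n\not\equiv -1 \pmod k$, and simplicity of the zeros and poles then forces order $-1$ at each fixed point when $k\geq 3$, and order $\pm 1$ when $k=2$. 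For ($\Leftarrow$) the paper writes explicit normal forms in each case and verifies invariance by substitution; you instead note that (1)--(2) together with simplicity make the divisors of $\widehat{g}_*\tilde\eta$ and $\tilde\eta$ coincide, so their ratio is a nonzero constant $c$, and condition (3) supplies a fixed point that is a pole (residue comparison) or, when $k=2$, possibly a zero (leading--coefficient comparison, where the two signs cancel) to anchor $c=1$. Your version avoids the Gauss--Bonnet bookkeeping, isolates exactly where hypothesis (3) enters (without an anchor $c$ is only a $k$--th root of unity, which is precisely the phenomenon of Example \ref{ejemplo1}), and even disposes of a $k=2$ subcase (both fixed points regular, $d_1=d_2=0$) that the paper's $GB=2$ case passes over without comment; the paper's computation, in exchange, yields the explicit invariant normal forms that are reused in the later classification theorems.

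One repair is needed in your maximality discussion (which the paper states but does not prove, so your addition is welcome): you justify finiteness of $\mathrm{Iso}(\eta)$ by claiming that a nontrivial finite $G$ satisfying (1)--(3) forces at least three poles, but Gauss--Bonnet only gives $Card(\Po_\eta)=Card(\Z_\eta)+2$, so this fails when $\Z_\eta=\emptyset$: for $\eta=\lambda\,dz/z$ and $G\cong\ZZ_n$ acting by rotations, conditions (1)--(3) hold, $\eta$ has only two poles, and $\mathrm{Iso}(\eta)$ is the continuous group $\{z\mapsto az\}$ of Remark \ref{separacionviak}. Your argument goes through once that case is split off: if $Card(\Po_\eta)\geq 3$, any subgroup satisfying (1) injects into the permutations of $\Po_\eta$ (a M\"obius transformation fixing three points is the identity), so $\mathrm{Iso}(\eta)$ is finite, satisfies (1)--(3) by your forward direction, and contains every subgroup satisfying (1)--(3) by your backward direction, giving the stated equivalence; in the two--pole case no finite subgroup satisfying (1)--(3) is maximal (each cyclic rotation group sits inside a larger one) and $\eta$ has no finite isotropy, so the equivalence holds there as well, but not by the sentence you wrote.
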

\begin{proof}
($\Rightarrow$)\\
Since $\eta$ is $G$--invariant, it is clear that (1) and (2) hold.

\noindent
Let $T$ and $\widehat{g}$ be as in Remark \ref{todaselipticas}.
To prove condition (3), consider the orbits under the action of $\widehat{g}(z)=\e^{i 2\pi/k}z$. 
For $z_{0}\in\CW\backslash\{0,\infty\}$, the orbit of $z_{0}$ has $k$ elements, so $T_{*}\eta$ has an 
expression of the form
\begin{equation}\label{Tforma}
T_*\eta=\lambda\frac{\prod_{\iota=1}^{\ell_2}(z^k-q_\iota^k)}{z^{d_1}\prod_{\iota=1}^{\ell_1}(z^k-p_\iota^k)}dz,
\quad\text{for }\lambda\in\CC^{*},
\end{equation}
where $\{q_{\iota}\}$ are zeros of $T_{*}\eta$ and similarly $\{p_{\iota}\}$ are poles of $T_{*}\eta$. 
Note that 
\begin{itemize}
\item if the origin is a pole then $d_1=1$,
\item if the origin is a zero then $d_1=-1$,
\item if the origin is a regular point then $d_1=0$.
\end{itemize}
Of course at $\infty\in\CW$ there could also be a pole, zero or a regular point, hence we shall have the 
existence of $d_2\in\{-1,0,1\}$ with $d_{2}$ following the same conventions as $d_{1}$ but at $\infty\in\CW$.
\\
By Gauss--Bonnet: 
\\
\centerline{$k (\ell_1- \ell_2)+d_1+d_2=2$.}
\\
Let $GB=k (\ell_1- \ell_2)$, then $GB=(2-d_1-d_2)\in\{0,1,2,3,4\}$. 
We examine these cases.
\begin{description}
\item [$GB=0$:] implies that $d_1=d_2=1$, $\ell_1=\ell_2$, for arbitrary $k\geq 2$ and condition (3.a) follows. 
\item [$GB=1$:] implies that $k=1$, which leads to a contradiction.
\item [$GB=2$:] implies that $k=2$, $d_1=-d_2=\pm 1$, and $\ell_1=\ell_2+1$ so condition (3.b)  holds:
either $0$ is a pole and $\infty$ is a zero, or viceversa.
\item [$GB=3$:] implies that $k=3$,  $\ell_1=\ell_2+1$,  so it follows that $d_1=-1$ and $d_2=0$, 
or $d_1=0$ and $d_2=-1$, but $\widehat{g}_*T_*\eta=e^{i 2 \theta \pi/3 }T_*\eta\neq T_*\eta$ with 
$\theta\in \{1,2\}$, which is a contradiction.
\item [$GB=4$:] we have two sub cases
\begin{enumerate}
\item[a)] $k=2$, $\ell_1=\ell_2+2$, $d_1=d_2=-1$ so the condition (3.a)  holds:
both fixed points $\{0,\infty\}$ are zeros.
\item[b)] $k=4$, $\ell_1=\ell_2+2$, $d_1=d_2=-1$ but $\widehat{g}_*T_*\eta=e^{i  \pi/2 }T_*\eta$ contradiction.
\end{enumerate}
\end{description}

\smallskip
\noindent
($\Leftarrow$)
Because of (1) and (2) Remark \ref{todaselipticas} holds.
\\
Assume that given a non--trivial $g\in G$, conditions (3.a) or (3.b) are met:
\begin{description}
\item [(3.a)] Thus by Gauss--Bonnet equation \eqref{Tforma} is:
$$
T_*\eta=\lambda\frac{\prod_{\iota=1}^{\ell}(z^k-q_\iota^k)}{z\prod_{\iota=1}^{\ell}(z^k-p_\iota^k)}dz.
$$
\item [(3.b)] In this case $\widehat{g}(z)=-z$ and by Gauss-Bonet we have three sub--cases for 
equation \eqref{Tforma}:
\begin{itemize}
\item[] $$
T_*\eta=\lambda\frac{z \prod_{\iota=1}^{\ell}(z^2-q_\iota^2)}{\prod_{\iota=1}^{\ell+1}(z^2-p_\iota^2)}dz,
$$
\item[] $$
T_*\eta=\lambda\frac{ \prod_{\iota=1}^{\ell}(z^2-q_\iota^2)}{z\prod_{\iota=1}^{\ell+1}(z^2-p_\iota^2)}dz,
$$
\item[] $$
T_*\eta=\lambda\frac{z \prod_{\iota=1}^{\ell}(z^2-q_\iota^2)}{\prod_{\iota=1}^{\ell+2}(z^2-p_\iota^2)}dz.
$$
\end{itemize}

\end{description} 
In all the cases $\widehat{g}_*T_*\eta=T_*\eta$ and so $g_*\eta=\eta$. 
\end{proof}
\begin{remark}\label{elmeollodelasunto}
Condition (3) of Theorem \ref{caracterizacion1} is key, in words it states that the fixed points for the 
non--trivial elements of the group must be zeros or poles.
\end{remark}

\begin{remark}\label{contraejemplos}
Condition (3) is a non--trivial condition.
For examples of rational 1--forms with simple poles and simple zeros that satisfy conditions (1) and (2) of 
Theorem \ref{caracterizacion1} but are not invariant under the action of $G$, see Examples \ref{ejemplo1}, 
\ref{ejemplo2}, \ref{ejemplo3}, \ref{ejemploTetra} and \ref{ejemploLangerOcta}.
\end{remark}
\noindent
It is to be noted that for $G\cong A_{5}$ condition (3) is automatically satisfied.
The statement and proof is presented in \S\ref{MobiusPolyhedra} as Proposition \ref{caracterizacionA5}.

\smallskip
Note that even though the classification result given by Theorem \ref{caracterizacion1} is quite general, it has a 
``local--geometric'' nature (in the sense that one needs to check a condition for each non--trivial element 
of $G$), a natural question is to ask whether there is a more ``global--geometric'' characterization. As we will 
see in the next section this indeed turns out to be the case.

\subsubsection{The case of $G\subset\PSL$ finite and not isomorphic to $\ZZ_{n}$.}\label{MobiusPolyhedra}

Recalling that the platonic polyhedra, namely the tetrahedra, octahedra (cube), icosahedra (dodecahedra) 
have isotropy group isomorphic to the finite subgroups $A_{4}$, $S_{4}$, $A_{5}$ respectively; 
a natural question is to ask whether there exist polyhedra whose isotropy groups are isomorphic to the cyclic 
and the dihedric groups, $\ZZ_{n}$ and $\DD_{n}$ respectively. 

\noindent
The answer is no, however by allowing \emph{spherical 
polyhedra} we obtain a positive answer in the case of $\DD_{n}$.

\begin{definition}
We will say that $\A\subset\CW$ is a \emph{spherical polyhedra} if $\A$ is a tiling of the sphere in which the 
sphere is partitioned by great arcs into spherical polygons. 
\end{definition}

\begin{definition}
Let $A$ be a polyhedra or a spherical polyhedra, an embedding $H:A\longrightarrow\A\subset\CW$ is a 
\emph{conformal embedding} if the image of every edge of $A$ is an arc of a circle in $\CW$ and the angle 
formed by any two edges of $A$ is preserved by $H$.
\\
Moreover, we shall say that $\A$ is a \emph{platonic polyhedra embedded in $\CW$} if it is a conformal 
embedding of a platonic polyhedra.
\end{definition}

\begin{definition}
We shall say that a \emph{(regular) $n$--gonal hosohedron $\mathfrak{H}_{n}$} is the spherical polyhedra 
obtained by embedding, via the inverse of a stereographic projection $\Psi^{-1}:\CC\longrightarrow\CW$, the 
set $\mathfrak{H}_{n}$ formed by the $n$ straight line segments $\{L_{j}\}_{j=1}^{n}$ that start at the origin 
(with angle $2\pi j/n$ respectively, $j=1,\ldots,n$) together with $0,\infty\in\CW$.
\\
We shall say that a \emph{(regular) $n$--gonal dihedron $\mathfrak{D}_{n}$} is the spherical polyhedra 
obtained by embedding, via the inverse of a stereographic projection $\Psi^{-1}:\CC\longrightarrow\CW$, the 
set $\mathfrak{D}_{n}$  formed by a regular $n$--sided polygon with vertices at 
$\{\e^{i 2\pi j/n}\}_{j=1}^{n}\subset\CC$ on the unit circle.
\end{definition}
It is immediately clear that the $n$--gonal hosohedron $\mathfrak{H}_{n}$ and the $n$--gonal dihedron 
$\mathfrak{D}_{n}$,
\begin{enumerate}
\item[1)] are spherical polyhedra, 
\item[2)] are duals of each other, moreover the $2$--gonal hosohedron and the $2$--gonal dihedron are self duals,
\item[3)] the isotropy group of either is precisely the dihedric group $\DD_{n}$.
\end{enumerate}
See figure \ref{HosohedraDihedra}.

\begin{figure}[htbp]
\begin{center}
\includegraphics[width=.95\textwidth]{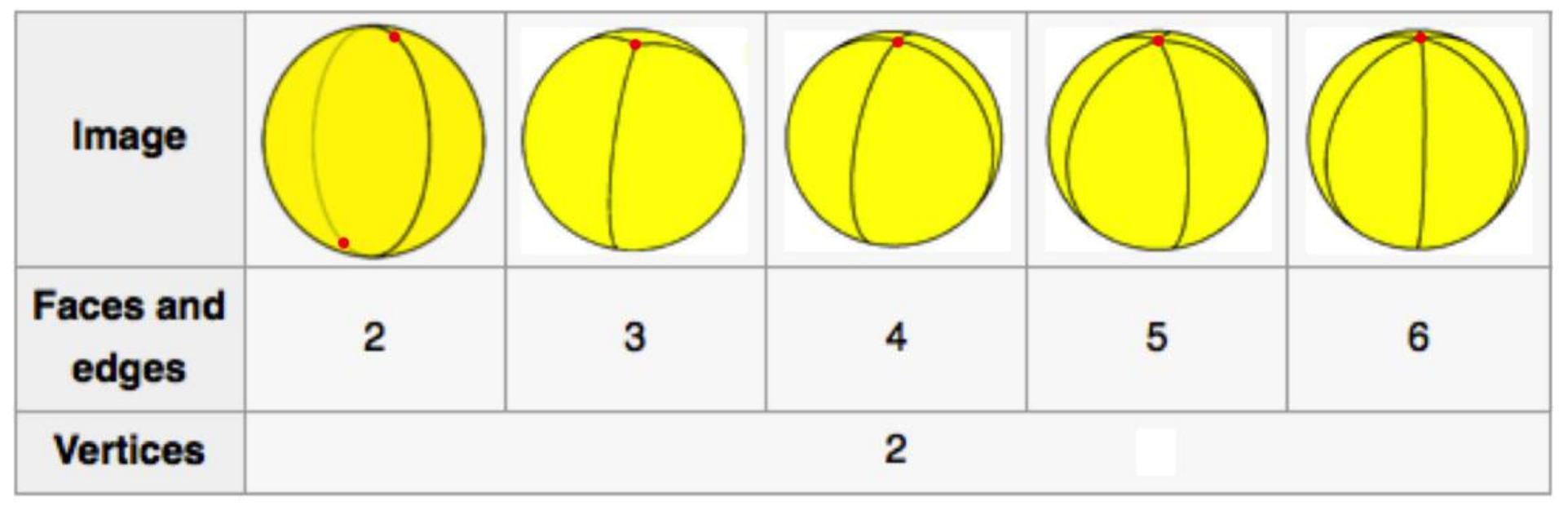}
\\
\includegraphics[width=.95\textwidth]{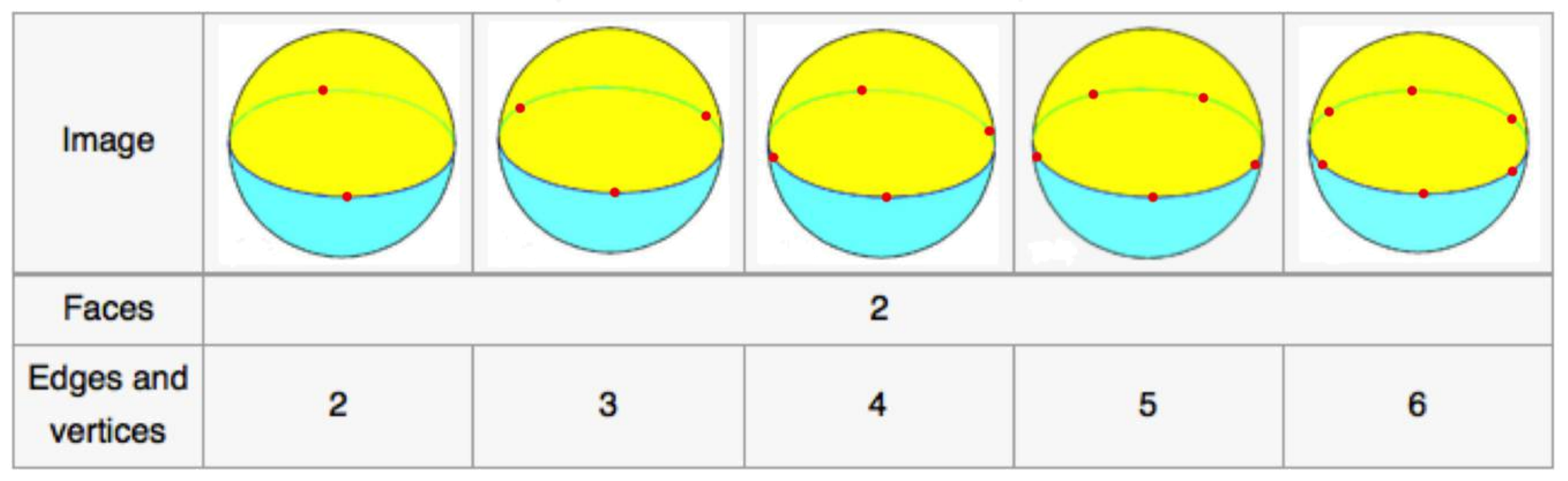}
\caption{
Hosohedra $\mathfrak{H}_{n}$, top figures, and dihedra $\mathfrak{D}_{n}$, bottom figures, are spherical 
polyhedra, duals of each other and with isotropy groups being the dihedral groups $\DD_{n}$.
}
\label{HosohedraDihedra}
\end{center}
\end{figure}

\begin{definition}
We shall say that the subset of spherical polyhedra comprised of the platonic polyhedra embedded in $\CW$, 
the $n$--gonal hosohedra and the $n$--gonal dihedra is the set of \emph{M\"obius polyhedra}. 
\end{definition}

\begin{remark}
Note that all M\"obius polyhedra can be obtained as the image of a conformal embedding 
$H:A\longrightarrow\A\subset\CW$ with $A$ being a platonic polyhedra, $\mathfrak{H}_{n}$ or 
$\mathfrak{D}_{n}$.
\end{remark}

\begin{definition}\label{antipodas}
Let $A$ be a platonic polyhedra, $\mathfrak{H}_{n}$ or $\mathfrak{D}_{n}$.
Let $H:A\longrightarrow\A\subset\CW$ be a conformal embedding.
\\
Let $p\in\A\subset\CW$, then $\widehat{p}\in\A\subset\CW$ is the \emph{antipode (in $\A$) of $p$} if 
$H^{-1}(\widehat{p})\in A$ is the antipode of $H^{-1}(p)\in A$.
\end{definition}
\begin{remark}
For the platonic polyhedra the antipode is as usual, in the case of $\mathfrak{H}_{n}$ and $\mathfrak{D}_{n}$
in $\CC$ define the antipode as the image via $z\mapsto -1/z$, for $z\neq0$, the antipode of $z=0$ is clear 
since  $H(\CC)=\CW\backslash\{point\}$.
\end{remark}
\begin{remark}
Center of a face, center of an edge, for a M\"obius polyhedra, are defined similarly.
\end{remark}

Note that if $\A$ is a spherical polyhedra and $H:\A\longrightarrow\CW$ is a conformal embedding, 
then $H\in\PSL$.
Given two conformal embeddings $\A_{1}$ and $\A_{2}$ of a M\"obius polyhedra $A$, 
the actual conformal embeddings will not be relevant (since $\A_{1}=T(\A_{2})$, for 
$T=H_{1}\circ H_{2}^{-1}\in\PSL$), hence, when not explicitly needed, we shall omit the reference of the 
conformal embedding.

Given a M\"obius polyhedra $\A$ 
denote by $V(\A)$ the vertices of $\A$, $E(\A)$ the centers of the edges of $\A$, and by 
$F(\A)$ the centers of the faces of $\A$. The cardinalities of these sets will be denoted by:
\\
$v:=Card(V(\A))$, $e:=Card(E(\A))$ and $f:=Card(F((\A))$. 
 
\begin{proposition}\label{encajeconforme}
Let $\A_{1}, \A_{2}$ be two M\"obius polyhedra with isotropy group $G$. 
Let $H_{1}:A\longrightarrow\A_{1}\subset\CW$ and $H_{2}:A\longrightarrow\A_{2}\subset\CW$ be two 
conformal embeddings of $A$ in the Riemann sphere $\CW$.
Let $\Z$ and $\Po$ be two subsets of $A$.
Let $\eta_{j}$, for $j=1,2$, be 1--forms with zeros in $H_{j}(\Z)$ and poles in $H_{j}(\Po)$.
\\
Then $\eta_{1}$ has isotropy group isomorphic to $G$ if and only if $\eta_{2}$ has isotropy group isomorphic 
to $G$.
Moreover, there exists a 
$T\in\PSL$ and $\lambda\in\CC^{*}$ such that $\eta_2=\lambda T_{*}\eta_1$.
\end{proposition}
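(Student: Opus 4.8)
The plan is to transport $\eta_1$ onto $\eta_2$ by a single M\"obius transformation and then read off the statement from Lemma \ref{pushForma}. I read the hypothesis on $\eta_j$ as saying that its simple zeros are exactly the points of $H_j(\Z)$ and its simple poles exactly the points of $H_j(\Po)$. First I would set $T:=H_2\circ H_1^{-1}$; as in the remark preceding the proposition, the composition of (the extensions of) two conformal embeddings of the same M\"obius polyhedron is a M\"obius transformation, so $T\in\PSL$, with $T(\A_1)=\A_2$ and $T(H_1(x))=H_2(x)$ for every $x\in A$. Since $H_1$ and $H_2$ are injective, $T$ carries the finite set $H_1(\Z)$ bijectively onto $H_2(\Z)$ and $H_1(\Po)$ bijectively onto $H_2(\Po)$.

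Next I would compare $T_*\eta_1$ with $\eta_2$. Push--forward by the automorphism $T$ of $\CW$ takes rational $1$--forms to rational $1$--forms and preserves the multiplicity of every zero and every pole, so $T_*\eta_1$ is a rational $1$--form whose zero set is exactly $T(H_1(\Z))=H_2(\Z)$ and whose pole set is exactly $T(H_1(\Po))=H_2(\Po)$, all of them simple; that is, $T_*\eta_1$ and $\eta_2$ have the same (simple) zero set and the same (simple) pole set. Invoking the zeros--poles description of rational $1$--forms from \cite{Julio} --- a rational $1$--form with simple poles and simple zeros on $\CW$ is determined by its zero set and its pole set up to a multiplicative factor in $\CC^{*}$ --- we obtain $\lambda\in\CC^{*}$ with $\eta_2=\lambda\,T_*\eta_1$, which is the ``moreover'' clause.

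The statement about isotropy groups then follows almost formally. Write $G_1<\PSL$ for the isotropy group of $\eta_1$. By Lemma \ref{pushForma}, $T_*\eta_1$ has isotropy group $TG_1T^{-1}$; and since $g_*(\lambda\eta)=\lambda\,g_*\eta$ for $\lambda\in\CC^{*}$, scaling by $\lambda$ does not change the isotropy group. Hence $\eta_2=\lambda\,T_*\eta_1$ has isotropy group $TG_1T^{-1}$, which is conjugate in $\PSL$, and in particular isomorphic, to $G_1$; therefore $\eta_1$ has isotropy isomorphic to $G$ if and only if $\eta_2$ does. I expect the only step that is not bookkeeping to be the uniqueness (up to a constant) of a rational $1$--form with prescribed simple zeros and poles --- exactly the zeros--poles coordinate description borrowed from \cite{Julio} --- together with the small verification that an element of $\PSL$, being an automorphism of $\CW$, carries simple zeros (resp.\ poles) to simple zeros (resp.\ poles) and creates no new ones, so that $T_*\eta_1$ really has the claimed zero and pole sets.
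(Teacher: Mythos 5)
Your proof is correct and follows essentially the same route as the paper's: the paper's entire argument is to take the M\"obius transformation obtained by composing the two conformal embeddings and feed it into Lemma \ref{pushForma}, exactly as you do with $T=H_{2}\circ H_{1}^{-1}$. You merely spell out the additional bookkeeping the paper leaves implicit (that $T_{*}\eta_{1}$ and $\eta_{2}$ share the same simple zero and pole sets, so the zeros--poles description forces $\eta_{2}=\lambda T_{*}\eta_{1}$, and that scaling by $\lambda\in\CC^{*}$ does not change the isotropy group), which is a sound and welcome elaboration rather than a different method.
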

\begin{proof}
Consider $T=H_{1}\circ H_{2}^{-1}$ in Lemma \ref{pushForma}.
\end{proof}

\begin{remark}
As it turns out, the case of the cyclic group $\ZZ_{n}$ for $n\geq 2$ will be different.
In fact all the non--trivial finite groups $G\subset \PSL$, with the exception of the cyclic groups, have a corresponding 
M\"obius polyhedra with isotropy group $G$.
What follows will apply for all non--trivial finite groups $G\subset\PSL$ except $\ZZ_{n}$. 
The case of $\ZZ_{n}\subset\PSL$ will be treated in \S\ref{ciclico}.
\end{remark}

\begin{lemma}
\label{classFinitos}
There exist
\begin{enumerate}
\item[1)] realizations $G_{1}$ of $A_4$, $G_{2}, G_{3}$ of $S_4$ and $G_{4}, G_{5}$ of $A_5$,
$G_{6}, G_{7}$ of $\DD_{ n}$ as subgroups of $\PSL$, 
and
\item[2)] embeddings of a regular tetrahedra $\A_{1}$, regular octahedra $\A_{2}$, cube $\A_{3}$, 
regular icosahedra $\A_{4}$, regular dodecahedra $\A_{5}$, the dihedron $\A_{6}=\mathfrak{D}_{n}$, 
and the hosohedron $\A_{7}=\mathfrak{H}_{n}$,
\end{enumerate}
such that the isotropy group of $\A_{i}$ is $G_{i}$ for $i=1,\ldots,7$.
\end{lemma}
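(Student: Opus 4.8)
The plan is to construct each group $G_i$ and each polyhedron $\A_i$ explicitly, using standard normalized coordinates on $\CW$, and then simply verify that the isotropy group of $\A_i$ (that is, the subgroup of $\PSL$ fixing $\A_i$ setwise) equals $G_i$. For the three platonic cases ($A_4$, $S_4$, $A_5$) this is classical: one places the vertices of a regular tetrahedron, octahedron/cube, or icosahedron/dodecahedron at well-known points of the Riemann sphere (e.g.\ the octahedron at $0,\infty,\pm1,\pm i$; the tetrahedron as an alternating subset of the cube vertices; the icosahedron via the golden-ratio coordinates), and takes $G_i$ to be the classically known group of rotations of that solid, viewed in $\PSL$ through stereographic projection. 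For the dihedral cases, I would take $\A_6=\mathfrak{D}_n$ and $\A_7=\mathfrak{H}_n$ exactly as in their definitions above, with $G_6 = G_7$ the group generated by $z\mapsto \e^{i2\pi/n}z$ and $z\mapsto 1/z$, which is manifestly $\DD_n$.

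The key steps, in order, would be: (i) fix the explicit coordinate models for $\A_1,\dots,\A_7$ as just described, noting that $\A_6$ and $\A_7$ are already M\"obius polyhedra by the remark following their definition, and that the embedded platonic solids $\A_1,\dots,\A_5$ are M\"obius polyhedra because stereographic projection is conformal and sends great circles/planes through the solid to circles in $\CW$; (ii) for each $i$, exhibit $G_i$ as an explicit finite subgroup of $\PSL$ and check $G_i$ preserves $\A_i$ — this is immediate since each generator is a rotation of the corresponding solid; (iii) show there is no larger subgroup of $\PSL$ preserving $\A_i$. For step (iii) the cleanest argument is: any $T\in\PSL$ preserving $\A_i$ permutes the finite vertex set $V(\A_i)$, permutes the edge-centers $E(\A_i)$ and face-centers $F(\A_i)$, and hence is determined by a permutation of a finite set; since a M\"obius transformation is determined by its action on three points, the stabilizer is finite, and comparing with the known orders ($|A_4|=12$, $|S_4|=24$, $|A_5|=60$, $|\DD_n|=2n$) — which one gets from the classification of finite subgroups of $\PSL$ recalled in the Background section — forces the stabilizer to be exactly $G_i$. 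One should also note that for the cube and octahedron (resp.\ dodecahedron and icosahedron) the full symmetry group in $\PSL$ is the rotation group and coincides for dual pairs, which is why $\A_2,\A_3$ share $S_4$-type groups and $\A_4,\A_5$ share $A_5$-type groups; the "two realizations" $G_2,G_3$ and $G_4,G_5$ can be taken genuinely distinct (non-identical as subsets of $\PSL$) by choosing the octahedron and cube in non-aligned positions, using the freedom of conjugation.

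I expect the main obstacle to be purely bookkeeping rather than conceptual: assembling consistent explicit coordinates for all seven polyhedra and verifying in each case that the claimed generators really do generate the full isometry (rotation) group and nothing larger — in particular making sure one does not accidentally include orientation-reversing maps (which are not in $\PSL$) and that the dihedral generators $z\mapsto\e^{i2\pi/n}z$, $z\mapsto1/z$ indeed exhaust the stabilizer of $\mathfrak{D}_n$ and $\mathfrak{H}_n$ (the potential subtlety being the small values $n=2$, where $\mathfrak{D}_2$ and $\mathfrak{H}_2$ are self-dual and one must check no extra symmetry sneaks in — which it does not, because the four marked points $\{0,\infty,1,-1\}$ type configuration has stabilizer exactly $\DD_2$ of order $4$ once one also remembers the vertex/face distinction is recorded in the polyhedral structure). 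Since every one of these realizations is classical and the finiteness of each stabilizer follows from Remark \ref{separacionviak} together with the classification of finite subgroups of $\PSL$, the proof reduces to citing these standard facts and pinning down one explicit model per group; I would present it as a short verification organized by the seven cases.
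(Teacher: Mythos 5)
Your proposal is correct and follows essentially the same route as the paper: explicit coordinate models for the seven M\"obius polyhedra, explicit generators realizing each finite group in $\PSL$ (rotations $z\mapsto \e^{2\pi i/n}z$ and $z\mapsto 1/z$ in the dihedral case, classical rotation groups of the solids in the platonic cases), and duality to handle the cube, dodecahedron and hosohedron. The only differences are cosmetic: the paper simply takes $G_3=G_2$, $G_5=G_4$, $G_7=G_6$ for the dual pairs and asserts (via the identification of isometry and isotropy groups) that the stated generators give the full stabilizer, whereas you add an explicit maximality check by comparing orders against the classification of finite subgroups of $\PSL$ — a harmless, slightly more careful elaboration of the same argument.
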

\noindent
\emph{Proof.} We present explicit examples for the pairs $(G_{i}, \A_{i})$, for $i=1,\ldots,7$.
\begin{description}
\item[$(G_{1}, \A_{1})$]
We embed a tetrahedron $\A_{1}$ in the Riemann sphere $\CW$ in such way that  
$$\left\{\frac{1}{\sqrt{2}},\ \frac{e^{i\frac{2\pi}{3}}}{\sqrt{2}},\ \frac{e^{i\frac{4\pi}{3}}}{\sqrt{2}},\ \infty\right\}$$ 
are its vertices. The six edges are segments of circles (great arcs) on $\CW$ from each vertex to the three 
adjacent vertices. And the four faces are the (open) triangles formed by removing the vertices and edges from 
$\CW$.

\noindent
The transformations 
\begin{equation}
T_1(z)=e^{i\frac{2\pi}{3}} z \quad \text{ and }\quad
T_2(z)=\frac{(\sqrt{2} +i \sqrt{6}) z+2+2 i \sqrt{3}}{2 \sqrt{2}-4 z},
\end{equation}
generate the tetrahedron's isometry\footnote{
Since $G_{1}$ is the isometry group of the tetrahedron $\A_{1}$ then it also is the isotropy group of the 
tetrahedron. The same is true for the other cases.
} 
group $G_{1}$ which is isomorphic to $A_{4}$.
\\
The orbit of $\frac{1}{\sqrt{2}}$ under $T_2$ is 
$\left\{\frac{1}{\sqrt{2}}, \infty, \frac{e^{i\frac{4\pi}{3}}}{\sqrt{2}}\right\}$.
\\
The triangle formed by the vertices 
$\left\{\frac{1}{\sqrt{2}},\: \frac{e^{i\frac{2\pi}{3}}}{\sqrt{2}},\: \frac{e^{i\frac{4\pi}{3}}}{\sqrt{2}}\right\}$ 
is a face and its center is $0$.
The midpoint of the edge with vertices $\frac{1}{\sqrt{2}}$ and $\frac{e^{i\frac{2\pi}{3}}}{\sqrt{2}}$ is 
$b=\frac{\sqrt{6}}{3+\sqrt{3}}e^{\frac{i \pi}{3}}$. 
\\
The orbit of $0$ (under the whole group) is the set of centers of the four faces, and the orbit of $b$ 
is the set of the midpoints of the six edges.

\item[$(G_{2}, \A_{2})$]
The origin, the fourth roots of unity and $\infty\in\CW$ are the six vertices of an octahedron $\A_{2}$. 
The twelve edges of $\A_{2}$ are the segments of circle $(0,1)$, $(0,-1)$, $(0,i)$, $(0,-i)$, $(1,\infty)$, 
$(-1,\infty)$, $(i,\infty)$, $(-i,\infty)$, and the segments on the unit circle between $1$, $i$, $-1$ and $-i$. The 
eight faces are the (open) triangles formed by removing the vertices and edges from $\CW$.
The isometry group $G_{2}$ of $\A_{2}$ is generated by 
$$T_3(z)=i z \quad \text{and} \quad T_4(z)=\frac{z+1}{-z+1}$$ 
and is isomorphic to $S_{4}$.

\item[$(G_{3}, \A_{3})$] 
For $\A_{3}$ consider the dual of $\A_{2}$. $G_{3}=G_{2}$.

\item[$(G_{4}, \A_{4})$]
Let 
\begin{multline}
T_5(z)=e^{2 \pi i/5} z   \quad \text{and} \\
T_6(z)=\frac{(\sqrt{5}+1) z- 2 e^{i\frac{2\pi}{5}}}
{\left(1-e^{i\frac{2\pi}{5}}+e^{i\frac{4\pi}{5}}\right) \left(3+\sqrt{5}\right) z-e^{i\frac{4\pi}{5}} \left(1+\sqrt{5}\right)}.
\end{multline}
Then $G_{4}$ generated by $T_5$ and $T_6$ is the isometry group of the icosahedra $\A_{4}$ whose twelve 
vertices are the orbit of $0$. The thirty edges of $\A_{4}$ are the orbit under $G_{4}$ of the segment 
$\overline{0\ T_{6}(0)}$. The twenty faces are as usual obtained by removing the vertices and edges from 
$\CW$. Finally note that $G_{4}\cong A_{5}$.
\item[$(G_{5}, \A_{5})$] 
For $\A_{5}$ consider the dual of $\A_{4}$. $G_{5}=G_{4}$.
\item[$(G_{6}, \A_{6})$] 
In this case the M\"obius polyhedra is the dihedron $\A_{6}=\mathfrak{D}_{n}$. The vertices are the $n$--th 
roots of unity, the respective segments of the unit circle are the edges and the faces are the upper and lower 
hemispheres. The isometry group of the dihedron $\A_{6}$ is generated by 
$$T_7(z)=e^{2 \pi i/n} z\quad \text{and}\quad T_8(z)=\frac{1}{z}$$ 
and is isomorphic to $\DD_n$.
\item[$(G_{7}, \A_{7})$] 
For $\A_{7}$ consider the dual of $\A_{6}$. $G_{7}=G_{6}$.\hfill\qed
\end{description}

\begin{proposition}\label{embedding}
Let $G$ be a finite subgroup of $\PSL$. If $G$ is isomorphic to $A_4$, $S_4$, $A_5$, or $\DD_{ n}$ then 
there exists an embedding, in the usual Riemann sphere \CW, of the tetrahedra, octahedra/cube, 
icosahedra/dodecahedra, or dihedron/hosohedron respectively, whose isotropy group is $G$.
\end{proposition}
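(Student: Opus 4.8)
The plan is to reduce Proposition~\ref{embedding} to the explicit constructions already carried out in Lemma~\ref{classFinitos} together with the conjugacy classification of finite subgroups of $\PSL$. The key observation is that the statement only asks for the \emph{existence} of an embedding whose isotropy group is $G$, and Lemma~\ref{classFinitos} already produces, for each abstract isomorphism type $A_4$, $S_4$, $A_5$, $\DD_n$, a specific pair $(G_i,\A_i)$ consisting of a concrete subgroup $G_i<\PSL$ and a concrete M\"obius polyhedron $\A_i$ with isotropy group exactly $G_i$. What remains is to pass from ``some realization'' to ``an arbitrary given $G$ of that isomorphism type''.

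First I would recall that, by the classification of finite subgroups of $\PSL$ stated in the Background section, any finite $G<\PSL$ isomorphic to $A_4$ (resp.\ $S_4$, $A_5$, $\DD_n$) is \emph{conjugate in $\PSL$} to the corresponding model subgroup $G_i$ built in Lemma~\ref{classFinitos}. (For $S_4$ and $A_5$ the two listed realizations $G_2,G_3$ and $G_4,G_5$ coincide, $G_3=G_2$ and $G_5=G_4$, so there is a single model in each case.) Fix $T\in\PSL$ with $G = T\,G_i\,T^{-1}$. Then I would set $\A := T(\A_i)$. Since $T$ is a M\"obius transformation it carries circles to circles and preserves angles, so $\A$ is again a conformal embedding of the same platonic polyhedron, hosohedron, or dihedron $A$; concretely, if $H_i:A\to\A_i$ is the conformal embedding realizing $\A_i$, then $H:=T\circ H_i:A\to\A$ is a conformal embedding in the sense of the definition. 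This is exactly the mechanism already used in Proposition~\ref{encajeconforme} and Lemma~\ref{pushForma}: conjugation by $T$ transports polyhedra to polyhedra and isotropy groups to conjugate isotropy groups.

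Next I would verify that the isotropy group of $\A$ is precisely $G$. The isotropy group of a subset of $\CW$ is the stabilizer $\{S\in\PSL : S(\A)=\A\}$. Since $\A = T(\A_i)$, we have $S(\A)=\A$ if and only if $(T^{-1}ST)(\A_i)=\A_i$, i.e.\ $S\in T\cdot\mathrm{Isot}(\A_i)\cdot T^{-1} = T\,G_i\,T^{-1} = G$. Hence $\mathrm{Isot}(\A)=G$, which is the claim. The case analysis over the isomorphism types is then just bookkeeping: $A_4\leftrightarrow$ tetrahedron, $S_4\leftrightarrow$ octahedron (and its dual, the cube), $A_5\leftrightarrow$ icosahedron (and its dual, the dodecahedron), $\DD_n\leftrightarrow$ dihedron $\mathfrak D_n$ (and its dual, the hosohedron $\mathfrak H_n$); in each case Lemma~\ref{classFinitos} supplies the needed model and the conjugation argument finishes it.

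The only real content beyond Lemma~\ref{classFinitos} is the appeal to the conjugacy (not merely abstract isomorphism) of finite subgroups of $\PSL$, so the ``hard part'' is essentially invoking the right classical theorem correctly; everything else is the transport-of-structure argument above, which is routine. One subtlety worth a sentence in the write-up is that the dihedral case for small $n$ needs the models $\mathfrak D_n$, $\mathfrak H_n$ to genuinely have isotropy group $\DD_n$ and not something larger (e.g.\ for $n=2$, where $\DD_2\cong\ZZ_2\times\ZZ_2$ and the $2$--gonal hoso/dihedron are self-dual) — but this was already recorded in the list of properties following the definition of $\mathfrak H_n$ and $\mathfrak D_n$, so it can simply be cited. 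I would therefore present the proof as: fix the model from Lemma~\ref{classFinitos}, conjugate by a $T$ realizing $G=T G_i T^{-1}$, check that $T$ maps the conformal embedding to a conformal embedding and that the isotropy group is carried to $G$, and conclude.
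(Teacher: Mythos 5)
Your proposal is correct and follows essentially the same route as the paper: both invoke Klein's conjugacy classification to write $G = T G_i T^{-1}$ with the model pairs $(G_i,\A_i)$ from Lemma~\ref{classFinitos} and take $\A = T(\A_i)$ as the desired embedding. If anything, your explicit stabilizer computation showing $\mathrm{Isot}(\A)$ equals exactly $G$ (not merely contains it) is slightly more careful than the paper's one-line justification.
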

\begin{proof}
From Klein's classical result on the classification of finite subgroups of $\PSL$, 
there exists $T\in\PSL$ such that $G=TG_{k} T^{-1}$, for some $k\in\{1,\ldots,7\}$, 
where $G_{k}$ is as in Lemma \ref{classFinitos}. 
Clearly $G$ fixes $T(\A_{k})$, and since $T$ is an isometry of the Riemann sphere $\CW$, then $T(\A_{k})$ is 
the sought after embedding.
\end{proof}

\begin{lemma}\label{ptosfijossonverticesaristascaras}
Let $\A$ be a M\"obius polyhedra with isotropy group $G$.

\noindent
Then 
\\ \centerline{
$\{$fixed points of non--trivial elements of $G\} = V(\A)\cup E(\A)\cup F(\A)$.}

\end{lemma}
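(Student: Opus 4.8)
The plan is to reduce the statement to the seven explicit round models of Lemma \ref{classFinitos}, and then prove the two inclusions separately: the inclusion $\supseteq$ by a one--line orbit--stabilizer count, and the inclusion $\subseteq$ by the classical ``counting of the poles'' argument underlying Klein's classification. For the reduction, Klein's theorem together with Lemma \ref{classFinitos} shows that $G$ is conjugate in $\PSL$ to one of $G_1,\dots,G_7$; since both sides of the asserted equality are $\PSL$--equivariant (an element $T$ carries fixed points of $g$ to fixed points of $TgT^{-1}$ and carries vertices, edge--centres and face--centres of $\A$ to those of $T(\A)$), we may, using Proposition \ref{encajeconforme}, assume $(\A,G)=(\A_k,G_k)$ for some $k\in\{1,\dots,7\}$, the two members of each dual pair being treated identically. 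Now $G\subset PSU(2)$ acts by rotations of the round sphere, $\A$ is a round M\"obius polyhedron, and $V(\A),E(\A),F(\A)$ are genuine vertices, edge--midpoints and face--centres: they are pairwise disjoint, each is preserved by every element of $G$, and by regularity each is a single $G$--orbit (use the explicit actions in Lemma \ref{classFinitos}). Write $v,e,f$ for their cardinalities and $P$ for the set of fixed points of non--trivial elements of $G$.

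For the inclusion $V(\A)\cup E(\A)\cup F(\A)\subseteq P$ I would use orbit--stabilizer: for a vertex $x$ the orbit $Gx$ is contained in $V(\A)$, so $|\mathrm{Stab}_G(x)|=|G|/|Gx|\ge|G|/v$, and a glance at the seven models shows $v,e,f<|G|$ in every case: $(v,e,f)=(4,6,4)$ with $|G|=12$ for the tetrahedron; $(6,12,8)$ or $(8,12,6)$ with $|G|=24$ for the octahedron/cube; $(12,30,20)$ or $(20,30,12)$ with $|G|=60$ for the icosahedron/dodecahedron; and $(n,n,2)$ or $(2,n,n)$ with $|G|=2n$ for the dihedron/hosohedron. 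Hence $\mathrm{Stab}_G(x)$ is non--trivial, i.e.\ $x\in P$, and the identical computation handles edge--midpoints and face--centres.

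For the reverse inclusion I would count incidences. Each non--trivial $g\in G$ is elliptic (Lemma \ref{afirmacion0}, applied to the invariant set $V(\A)\cup E(\A)\cup F(\A)$, which has at least three points), hence fixes exactly two points of $\CW$. Counting the pairs $(g,p)$ with $g\in G\setminus\{\mathrm{id}\}$ and $g(p)=p$ in two ways gives $2(|G|-1)=\sum_{i=1}^r|O_i|(n_i-1)$, where $O_1,\dots,O_r$ are the $G$--orbits contained in $P$ and $n_i=|G|/|O_i|\ge2$ are the orders of the (cyclic) stabilizers; dividing by $|G|$ yields $\sum_{i=1}^r(1-1/n_i)=2-2/|G|$. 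Each summand lies in $[\tfrac12,1)$ and the sum lies in $(1,2)$, so $r\in\{2,3\}$; and $r=2$ would force $|O_1|=|O_2|=1$, i.e.\ $G$ would fix two points and be cyclic, which is impossible here. Therefore $r=3$: $P$ is the union of exactly three $G$--orbits. By the previous step the pairwise disjoint orbits $V(\A),E(\A),F(\A)$ are three of these orbits, hence they are all of them, and $P=V(\A)\cup E(\A)\cup F(\A)$.

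The step I expect to be the real obstacle is the reduction together with the identification in the last step: one must be sure that an arbitrary M\"obius polyhedron with isotropy $G$ is carried by some element of $\PSL$ onto a round model (so that edge-- and face--centres transform equivariantly), and that for non--cyclic $G$ the incidence count leaves no room beyond the three orbits $V(\A),E(\A),F(\A)$ — which is exactly the combinatorial core of Klein's classification. A minor point to check by hand is the degenerate case $\DD_2\cong\ZZ_2\times\ZZ_2$ (and small $n$ for $\mathfrak{H}_{n}$, $\mathfrak{D}_{n}$), where one verifies directly that $V,E,F$ remain disjoint and that $v,e,f<|G|$ still holds.
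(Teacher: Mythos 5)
Your argument is correct, but it follows a genuinely different route from the paper's. The paper's own proof is a two--line geometric argument with symmetry axes: a fixed point $x$ of a non--trivial (hence elliptic) $g\in G$ comes paired with a second fixed point $y$, the pair $\{x,y\}$ is a rotation axis of $\A$, and the paper then invokes the classical fact that the axes of a regular (M\"obius) polyhedron pass exactly through vertices, edge--midpoints and face--centres to get both inclusions at once. You instead reduce, via Proposition \ref{encajeconforme} and Klein's classification, to the seven round models of Lemma \ref{classFinitos}, prove $V(\A)\cup E(\A)\cup F(\A)\subseteq P$ by orbit--stabilizer (using $v,e,f<\abs{G}$), and prove the reverse inclusion by the Klein/Riemann--Hurwitz count $\sum_i\bigl(1-\tfrac1{n_i}\bigr)=2-\tfrac2{\abs{G}}$, which forces exactly three exceptional orbits once the cyclic case $r=2$ is excluded; since $V(\A)$, $E(\A)$, $F(\A)$ are three disjoint $G$--orbits inside $P$, they exhaust it. What your approach buys is self--containedness: it actually proves the ``axes only pass through $V\cup E\cup F$'' statement that the paper's proof tacitly assumes, at the price of the reduction to round models and the transitivity/disjointness checks (which you rightly flag for $\DD_2$ and small $n$); the paper's proof buys brevity and geometric transparency but leans on that unproved classical fact. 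Two small remarks: ellipticity of every non--trivial $g\in G$ follows already from finiteness of $G$ (no need to feed the invariant set into Lemma \ref{afirmacion0}), and your assertion that the sum lies in $(1,2)$ uses $\abs{G}\geq 3$, which holds here because the isotropy group of a M\"obius polyhedron has order at least $4$.
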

\begin{proof}
Let $x\in\CW$ a fixed point for a non--trivial $g\in G$. Since $g$ is elliptic there is another fixed point of $g$, 
namely $y\in\CW$, $x\neq y$. This pair $\{x,y\}$ defines a symmetry axis. 
On the other hand since $\A$ is a M\"obius polyhedra with $G$ as its isometry group, then 
through each element of $V(\A)\cup E(\A)\cup F(\A)$ there is a symmetry axis going through it, 
hence $\{x,y\}\subset V(\A)\cup E(\A)\cup F(\A)$.

\noindent
Now let $q\in V(\A)\cup E(\A)\cup F(\A)$, since through each element of $V(\A)\cup E(\A)\cup F(\A)$ there is a symmetry axis going through it, it follows that there is a non--trivial $g\in G$ with fixed point $q$.
\end{proof}

\begin{lemma}\label{VerticesAristasCaras}
Let $\A$ be a M\"obius polyhedra with isotropy group $G$, and let $\eta$ be a $G$--invariant rational 1--form.
Then $V(\A)\cup E(\A)\cup F(\A)\subset \Po_\eta\cup\Z_\eta$.
\end{lemma}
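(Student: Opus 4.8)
The plan is to derive this as an immediate consequence of the two preceding lemmas together with condition (3) of Theorem \ref{caracterizacion1}. First I would invoke Lemma \ref{ptosfijossonverticesaristascaras}: since $\A$ is a M\"obius polyhedra with isotropy group $G$, the set $V(\A)\cup E(\A)\cup F(\A)$ is \emph{exactly} the set of fixed points of the non--trivial elements of $G$. Thus the inclusion to be proved is equivalent to the statement that every fixed point of every non--trivial $g\in G$ is a pole or a zero of $\eta$.

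Next I would feed this into Theorem \ref{caracterizacion1}. By hypothesis $\eta$ is a rational $1$--form with simple poles and simple zeros and it is $G$--invariant (with $G$ finite, as part of the standing assumptions on M\"obius polyhedra), so the theorem applies and condition (3) holds. Concretely, fix a non--trivial $g\in G$ and let $\{x,y\}$ be its fixed--point set. If (3.a) holds then $\{x,y\}\subset\Po_\eta\cup\Z_\eta$ directly; if (3.b) holds then $\{x,y\}\subset\Po_\eta\subset\Po_\eta\cup\Z_\eta$. In either case the fixed points of $g$ lie in $\Po_\eta\cup\Z_\eta$.

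Finally I would combine the two observations: given $q\in V(\A)\cup E(\A)\cup F(\A)$, Lemma \ref{ptosfijossonverticesaristascaras} produces a non--trivial $g\in G$ fixing $q$, and the previous paragraph shows $q\in\Po_\eta\cup\Z_\eta$. Since $q$ was arbitrary, $V(\A)\cup E(\A)\cup F(\A)\subset\Po_\eta\cup\Z_\eta$, as claimed.

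I do not expect any genuine obstacle here; the content of the lemma is entirely carried by Lemma \ref{ptosfijossonverticesaristascaras} (the geometry of symmetry axes of M\"obius polyhedra) and by condition (3) of Theorem \ref{caracterizacion1} (the arithmetic/Gauss--Bonnet obstruction forcing fixed points to be poles or zeros). The only point meriting a line of care is checking that the hypotheses of Theorem \ref{caracterizacion1} are in force, namely that $\eta$ has simple poles and simple zeros and that $G$ is finite, both of which are part of the setup for M\"obius polyhedra and for the standing conventions of this section.
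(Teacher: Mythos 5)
Your proof is correct and follows essentially the same route as the paper's: it combines Lemma \ref{ptosfijossonverticesaristascaras} (identifying $V(\A)\cup E(\A)\cup F(\A)$ with the fixed points of non--trivial elements of $G$) with condition (3) of Theorem \ref{caracterizacion1} to conclude that each such point lies in $\Po_\eta\cup\Z_\eta$. The only cosmetic difference is that the paper also explicitly cites Lemma \ref{afirmacion0} to note the elements are elliptic, which your argument absorbs into the application of condition (3).
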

\begin{proof}
By Lemma \ref{ptosfijossonverticesaristascaras}, given $x\in V(\A)\cup E(\A)\cup F(\A)$ there exists non--trivial $g\in G$ with fixed points $\{x,y\}$, for some $y\in\CW$. 
By Lemma \ref{afirmacion0} $g$ is an elliptic element and by Theorem \ref{caracterizacion1}.3 the fixed points $\{x,y\}\in\Po_\eta\cup\Z_\eta$. 
\end{proof}

\begin{definition}\label{defRA}
1. The \emph{fundamental region for $\A$} denoted by $\mathcal{R}_{\A}$ 
will be 
the interior of the triangle formed by the center of a face and the two vertices of an edge of the same face; 
half of the interior of said edge; 
the segment that joins one of the vertices of the edge to the center of the face; 
one of the two vertices of the edge and the center of the face (see figure \ref{regionfundamental}). 

\noindent
2. Let $\widehat{\mathcal{R}}_{\A}=\mathcal{R}_{\A} \backslash\{V(\A) \cup E(\A) \cup F(\A)\}$, we shall call 
this is a \emph{quasi--fundamental region of $\A$}.

\noindent
3. A \emph{fundamental region} $\mathcal{R}_{G}$ for the action of the group $G$, is a maximal connected 
region on $\CW$ such that for $a\in\mathcal{R}_{G}$ the orbit $\mathcal{O}(a)$ of $a$ only has one element 
in $\mathcal{R}_{G}$, that is $\mathcal{O}(a)\cap\mathcal{R}_{G}=\{a\}$.
\end{definition}

\begin{figure}[htbp]
\begin{center}
\includegraphics[width=\textwidth]{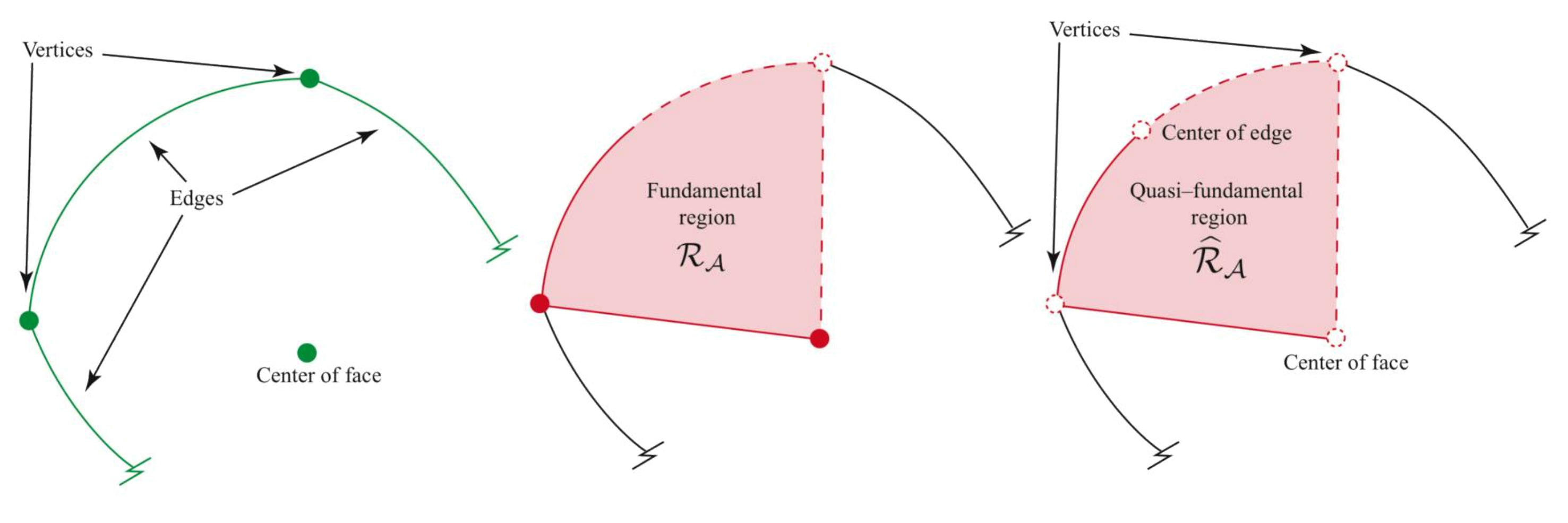}
\caption{\emph{Fundamental and quasi--fundamental regions}. In (a) we have 1 complete edge, a center of 
face and the two vertices of the edge.
In (b) the construction of the fundamental region for $\A$ is exemplified: the fundamental region 
$\mathcal{R}_{\A}$ is 
the interior of the triangle formed by the center of a face and the two vertices of an edge of the same face; 
half of the interior of said edge; 
the segment that joins one of the vertices of the edge to the center of the face; 
one of the two vertices of the edge and the center of the face.
In (c) the quasi--fundamental region 
$\widehat{\mathcal{R}}_{\A}=\mathcal{R}_{\A}\backslash\{V(\A) \cup E(\A) \cup F(\A)\}$ is shown.
}
\label{regionfundamental}
\end{center}
\end{figure}

\begin{remark}
It follows that if $\A$ is a M\"obius polyhedra and $G$ leaves invariant $\A$, then $\mathcal{R}_{\A}$ is a 
fundamental region for the action of $G$. In other words $\mathcal{R}_{\A}$ is one of many possible 
$\mathcal{R}_{G}$.
\end{remark}

\begin{lemma}\label{ordenRegionQuasi}
Let $\A$ be a M\"obius polyhedra and $G$ the isotropy group of $\A$, then for 
$a\in\widehat{\mathcal{R}}_{\A}$, $Card(\mathcal{O}(a))=Card(G)$.
\end{lemma}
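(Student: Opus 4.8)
The plan is to deduce the statement from the orbit--stabilizer theorem, once we know that a point of the quasi--fundamental region is fixed by no non--trivial element of $G$. For $a\in\CW$ write $G_{a}=\{g\in G\mid g(a)=a\}$ for its isotropy (stabilizer) subgroup; since $G$ is finite (Remark \ref{separacionviak}), one has $Card(\mathcal{O}(a))\cdot Card(G_{a})=Card(G)$, so the claim $Card(\mathcal{O}(a))=Card(G)$ is equivalent to showing $G_{a}$ is trivial.

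First I would unwind the definition: by Definition \ref{defRA}.2 we have $\widehat{\mathcal{R}}_{\A}=\mathcal{R}_{\A}\backslash\big(V(\A)\cup E(\A)\cup F(\A)\big)$, so any $a\in\widehat{\mathcal{R}}_{\A}$ in particular satisfies $a\notin V(\A)\cup E(\A)\cup F(\A)$. Then I would invoke Lemma \ref{ptosfijossonverticesaristascaras}, which identifies the set of fixed points of the non--trivial elements of $G$ with $V(\A)\cup E(\A)\cup F(\A)$. Since $a$ lies outside this set, no non--trivial $g\in G$ can fix $a$, i.e. $G_{a}=\{\mathrm{id}\}$. Combining this with the orbit--stabilizer count above yields $Card(\mathcal{O}(a))=Card(G)$.

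There is essentially no technical obstacle here: the argument is a one--line consequence of the preceding lemmas. The only points requiring care are citing the correct prior results (Lemma \ref{ptosfijossonverticesaristascaras} for the identification of the fixed--point set, and the finiteness of $G$), together with the purely set--theoretic observation that deleting all the fixed points from any subset of $\CW$ leaves only points with trivial stabilizer. One could alternatively argue through the Remark following Definition \ref{defRA}, using that $\mathcal{R}_{\A}$ is a fundamental region for the $G$--action so distinct elements of $G$ send $a$ to distinct points; but the orbit--stabilizer route is cleaner and transfers verbatim to the cyclic case treated later.
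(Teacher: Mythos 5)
Your argument is correct and is essentially the paper's own: the paper also reduces the claim to showing that a point with orbit smaller than $Card(G)$ must be fixed by some non--trivial $g\in G$ and then invokes Lemma \ref{ptosfijossonverticesaristascaras} to conclude such points lie in $V(\A)\cup E(\A)\cup F(\A)$, which $\widehat{\mathcal{R}}_{\A}$ excludes. You merely phrase the counting step explicitly via orbit--stabilizer instead of leaving it implicit, which is a harmless (and slightly cleaner) presentation of the same proof.
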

\begin{proof}
Let $x\in \A$ such that $Card(\mathcal{O}(x))\neq Card(G)$, then by Lemma \ref{afirmacion0} there exists 
non--trivial $g\in G$ elliptic and $x$ is a fixed point of $g$. 
By Lemma \ref{ptosfijossonverticesaristascaras} 
it follows that if 
$$a\in\CW_{\A}:=\CW\backslash\big( V(\A)\cup E(\A) \cup F(\A)\big)$$ 
then $Card(\mathcal{O}(a))=Card(G)$.
\end{proof}

The table on page 18 of \cite{Toth}, shows the order of the subgroups $S$ that leave invariant 
$V(\A)$, $E(\A)$ and $F(\A)$.
An appropriate interpretation of the aforementioned table (or a straightforward counting argument) leads to our 
Table \ref{tablaBuena}, which will be useful in what follows.
\begin{table}[htp]
\caption{Cardinality of $V(\A)$, $E(\A)$ and $F(\A)$ for the M\"obius polyhedra $\A$ associated to the finite 
isotropy groups $G\subset\PSL$ (excluding $\ZZ_{n}$).
Recall that $v=Card(V(\A))$, $e=Card(E(\A))$ and $f=Card(F((\A))$.}
\begin{center}
\begin{tabular}{|c|c|c|c|c|c|c|c|}
\hline
group $G$  & $A_{4}$ & $S_{4}$ & $S_{4}$ & $A_{5}$ & $A_{5}$ & $\DD_{ n}$ & $\DD_{ n}$ \\
\hline
M\"obius  & Tetra- & Cube & Octa- & Icosa- & Dodeca- & Di- & Hoso- \\
polyhedra  & hedron & & hedron & hedron & hedron & hedron & hedron \\
 \hline
 \hline
 $v$
 & 4 & 8 & 6 & 12 & 20 & n & 2\\
 \hline
 $e$ 
 & 6 & 12 & 12 & 30 & 30 & n & n\\
 \hline
 $f$
  & 4 & 6 & 8 & 20 & 12 & 2 & n\\
 \hline
 $Card(G)$ & 12 & 24 & 24 & 60 & 60 & 2n & 2n \\
 \hline

\end{tabular}
\end{center}
\label{tablaBuena}
\end{table}%

\medskip
As mentioned before, condition (3) of Theorem \ref{caracterizacion1} is automatically satisfied for 
$G\cong A_{5}$. This is the content of the next result.
\begin{proposition}[Characterization of rational 1--forms with isotropy $A_{5}$]\label{caracterizacionA5}
\hfill\\
Let $G$ be a finite subgroup of $\PSL$ isomorphic to $A_{5}$ and  let $\eta$ be a $1$-form with simple poles 
and zeros. 
\\
The 1--form $\eta$ 
has isotropy group $G$
if and only if the following two conditions are met:
\begin{enumerate}
\item[1)] $\Po_{\eta}$ is $G$--invariant.
\item[2)] $\Z_{\eta}$ is $G$--invariant.
\end{enumerate}
\end{proposition}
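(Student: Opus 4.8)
The plan is to reduce the statement to Theorem~\ref{caracterizacion1}. Conditions (1) and (2) above are exactly conditions (1)--(2) of that theorem, and a $1$--form with isotropy $G$ automatically has $G$--invariant pole and zero sets, so the ``only if'' direction is immediate. The whole content is thus: when $G\cong A_{5}$, conditions (1) and (2) force condition (3) of Theorem~\ref{caracterizacion1}, and a $G$--invariant $\eta$ then has isotropy \emph{exactly} $G$.

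First I would fix, via Proposition~\ref{embedding}, a M\"obius polyhedron $\A$ (the icosahedron) with isotropy group $G$. Since the $G$--orbit of a vertex already has $|G|/5=12$ elements, $G$ acts transitively on $V(\A)$, and similarly on $E(\A)$ and $F(\A)$; hence $V(\A)$, $E(\A)$, $F(\A)$ are single $G$--orbits, of cardinalities $v=12$, $e=30$, $f=20$ (Table~\ref{tablaBuena}), with point--stabilizers cyclic of orders $5$, $2$, $3$. By Lemma~\ref{ptosfijossonverticesaristascaras} these three sets comprise all points of $\CW$ with non--trivial stabilizer, so by Lemma~\ref{ordenRegionQuasi} every other $G$--orbit has $|G|=60$ elements. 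Consequently every finite $G$--invariant subset of $\CW$ is a disjoint union of finitely many orbits of size $60$ together with some sub-collection of $\{V(\A),E(\A),F(\A)\}$. Assuming (1) and (2), I would therefore write
\[
Card(\Po_{\eta})=60a+12x_{P}+20y_{P}+30z_{P},\qquad
Card(\Z_{\eta})=60b+12x_{Z}+20y_{Z}+30z_{Z},
\]
with $a,b\in\ZZ_{\geq0}$ and $x_{\bullet},y_{\bullet},z_{\bullet}\in\{0,1\}$ recording whether $V(\A)$ (resp.\ $F(\A)$, resp.\ $E(\A)$) is contained in the pole set ($\bullet=P$) or the zero set ($\bullet=Z$), noting $x_{P}+x_{Z}\leq1$, $y_{P}+y_{Z}\leq1$, $z_{P}+z_{Z}\leq1$ since a point is never both.

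The crux is then an arithmetical observation. The Gauss--Bonnet identity $Card(\Po_{\eta})-Card(\Z_{\eta})=2$ reads
\[
60(a-b)+12(x_{P}-x_{Z})+20(y_{P}-y_{Z})+30(z_{P}-z_{Z})=2 ,
\]
and I would reduce it modulo $5$, modulo $3$ and modulo $4$ in turn. Modulo $5$ it gives $2(x_{P}-x_{Z})\equiv2$, hence $x_{P}-x_{Z}\equiv1\pmod 5$ and so $x_{P}=1$, $x_{Z}=0$; modulo $3$ it gives $2(y_{P}-y_{Z})\equiv2$, hence $y_{P}-y_{Z}\equiv1\pmod 3$ and so $y_{P}=1$, $y_{Z}=0$; modulo $4$ it gives $2(z_{P}-z_{Z})\equiv2\pmod 4$, so $z_{P}-z_{Z}$ is odd and exactly one of $z_{P},z_{Z}$ equals $1$. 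That is, $V(\A)\subset\Po_{\eta}$, $F(\A)\subset\Po_{\eta}$, and $E(\A)\subset\Po_{\eta}\cup\Z_{\eta}$. To verify condition (3) I would then take any non--trivial $g\in G$: by Lemma~\ref{afirmacion0} it is elliptic of order $k\in\{2,3,5\}$, and since the cyclic subgroups of $A_{5}$ have orders only $1,2,3,5$ the stabilizer of each fixed point of $g$ has order exactly $k$; so the two fixed points of $g$ lie in $V(\A)$ if $k=5$, in $F(\A)$ if $k=3$, and in $E(\A)$ if $k=2$. Hence for $k\in\{3,5\}$ both fixed points are poles (condition (3.b)), and for $k=2$ -- where $g^{2}=\mathrm{id}$ -- both are poles or zeros (condition (3.a)). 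Theorem~\ref{caracterizacion1} then yields that $\eta$ is $G$--invariant. Finally its isotropy group $\widehat G$ contains $G$ and is finite by Remark~\ref{separacionviak}(2), since $Card(\Po_{\eta})\geq 12>2$; and in Klein's classification $A_{5}$ is a maximal finite subgroup of $\PSL$ (it is neither cyclic nor dihedral, and $|A_{5}|=60>|S_{4}|>|A_{4}|$), so $\widehat G=G$.

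The only step genuinely special to $A_{5}$, and the one I expect to carry the argument, is the modular reduction: it works precisely because the three exceptional orbit sizes $12,20,30$ are separately detectable modulo $5$, $3$ and $4$, which is exactly why the analogous statement fails for $A_{4}$, $S_{4}$, $\DD_{n}$ and $\ZZ_{n}$ and condition (3) of Theorem~\ref{caracterizacion1} must there be imposed by hand. As consistency checks one should confirm the three congruences against Table~\ref{tablaBuena}, and observe that running the argument with the dodecahedron in place of the icosahedron merely interchanges $v=12$ with $f=20$, which by Proposition~\ref{encajeconforme} leaves the conclusion unchanged.
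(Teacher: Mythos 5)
Your proof is correct and follows essentially the same route as the paper: decompose the $G$--invariant pole and zero sets into free orbits of size $60$ plus subsets of $V(\A)$, $E(\A)$, $F(\A)$, feed this into Gauss--Bonnet, and use congruences (mod $5$, mod $3$, and parity) to force the order--$5$ and order--$3$ fixed points to be poles and $E(\A)$ to be entirely poles or entirely zeros, so that condition (3) of Theorem~\ref{caracterizacion1} holds, with maximality of $A_{5}$ among finite subgroups of $\PSL$ closing the argument. Your only deviations are cosmetic: you work with the icosahedron rather than the paper's dodecahedron (which just swaps $v=12$ and $f=20$) and count free orbits directly instead of points in the quasi--fundamental region.
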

\begin{proof}
($\Rightarrow$) This is immediate.

\medskip
\noindent
($\Leftarrow$) Since $G\cong A_{5}$, there is a dodecahedron $\A\subset\CW$ whose isotropy group is $G$.

\noindent
Let 
$l_1\in\{0,1\}$ be the number of poles on $f\in F(\A)$.

\noindent
Let
$k_1\in\{0,1\}$ be the number of zeros on $f\in F(\A)$.

\noindent
Since $F(\A)$ is an orbit of $G$, then $l_1+k_1\in \{0,1\}$.

\smallskip
\noindent
Let
$l_2\in\{0,1\}$ be the number of poles on $e\in E(\A)$.

\noindent
$k_2\in\{0,1\}$ be the number of zeros on $e\in E(\A)$.

\noindent
Since $E(\A)$ is an orbit of $G$, thus $l_2+k_2\in \{0,1\}$.

\smallskip
\noindent
Let
$l_3\in\{0,1\}$ be the number of poles on $v\in V(\A)$.

\noindent
Let
$k_3\in\{0,1\}$ be the number of zeros on $v\in V(\A)$.

\noindent
Once again, since $V(\A)$ is an orbit of $G$, then $l_3+k_3\in \{0,1\}$.

\smallskip
\noindent
Let
$l_4$ be the number of poles in the quasi--fundamental region $\widehat{\R}_{\A}$.

\noindent
Let
$k_4$ be the number of zeros in the quasi--fundamental region $\widehat{\R}_{\A}$.

\noindent
In this case, since $a\in\widehat{\R}_{\A}$ satisfies $\mathcal{O}(a)\cap\mathcal{R}_{G}=\{a\}$, then $l_4,k_4\in\NN\cup\{0\}$.

\smallskip
By Gauss--Bonet, and/or observing Table \ref{tablaBuena}, we have:

\begin{equation}\label{GBparaDode}
12 l_1+30 l_2+ 20 l_3+ 60 l_4-12 k_1-30 k_2-20 k_3-60 k_4=2.
\end{equation}

\noindent 
Hence it follows that
$$5|(-12 l_1+12 k_1+ 2),$$
which implies that $l_1=1$ and $k_1=0$.
Therefore, upon substitution into equation \eqref{GBparaDode} and dividing by 10 we obtain:
\begin{equation}\label{GB2}
3 l_2+ 2 l_3+6 l_4-3 k_2- 2 k_3- 6 k_4=-1,
\end{equation}
so it follows that
$3|(-1-2 l_3+2 k_3)$
and hence $l_3=1$ and $k_3=0$.

\noindent
Upon substitution in equation \eqref{GB2} we have
$$l_2+ 2 l_4-k_2- 2 k_4=-1,$$
so $2|(-l_2+k_2-1)$ and we obtain two cases: 
\begin{enumerate}
\item[a)] $l_2=1$ and $k_2=0$ or
\item[b)] $l_2=0$ and $k_2=1$.
\end{enumerate}

\noindent
Summarizing we have:
$F(\A)\cup V(\A)\subset\Po_{\eta}$ and either
\begin{enumerate}
\item[a)] $E(\A)\subset\Po_{\eta}$ or
\item[b)] $E(\A)\subset\Z_{\eta}$.
\end{enumerate}
In any case, condition (3) of Theorem \ref{caracterizacion1} is true.
So $\eta$ is $A_5$ invariant.

\smallskip
\noindent
Finally note that the minimality condition of Theorem \ref{caracterizacion1} is automatically met since 
there are no finite subgroups of $\PSL$ that contain as a proper subgroup a group isomorphic to $A_{5}$.
\end{proof}

\begin{definition}
We will say that $G<\PSL$ is a \emph{platonic} subgroup if it is a finite subgroup not isomorphic to a cyclic or a 
dihedric ({\it i.e.} it is isomorphic to $A_{4}$, $S_{4}$ or $A_{5}$).
\end{definition}

The following result classifies the rational 1--forms with simple poles (and zeros) whose isotropy groups are 
platonic.

\begin{theorem}[Classification of 1--forms with simple poles and zeros having isotropy a platonic subgroup]
\label{main2}
Let $G<\PSL$ be a platonic subgroup. 
Let $\eta$ be a 1--form with simple poles and simple zeros. 
\\
Then the 1--form $\eta$, with $k$ poles and $k-2$ zeros, 
is  $G$--invariant 
if and only if 
there is a platonic polyhedra $\A$ conformally embedded in $\CW$ with isotropy group $G$ such that 
\begin{enumerate}
\item[$\bullet$] $V(\A)\cup F(\A)\subset\Po_{\eta}$, 

\item[$\bullet$] 
either
\begin{enumerate}
\item 
$E(\A)\subset\Z_{\eta}$: In which case there are $\ell$ poles and $\ell$ zeros in the 
quasi--fundamental region $\widehat{\mathcal{R}}_{\A}$, 
for some non negative $\ell$ satisfying $$k=\ell\times Card(G)+v+f, \text{ or}$$

\item $E(\A)\subset\Po_{\eta}$: In which case there are $\ell$ poles and $\ell+1$ zeros in the 
quasi--fundamental region $\widehat{\mathcal{R}}_{\A}$,  
for some non negative $\ell$ satisfying $$k=\ell\times Card(G)+v+f+e.$$ 
\end{enumerate}
\end{enumerate}
Moreover $G$ is the maximal group, as a subgroup of $\PSL$, satisfying the above conditions if and only if 
$\eta$ has isotropy group $G$.\\
In the case that $G$ is isomorphic to $A_5$ or $S_4$, the maximality condition is automatically satisfied.
\end{theorem}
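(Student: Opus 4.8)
The plan is to derive Theorem~\ref{main2} from the local characterization Theorem~\ref{caracterizacion1}, converting its pointwise condition~(3) into the stated global data on a single platonic polyhedron and using Gauss--Bonnet for the bookkeeping.

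For necessity I would start from a $G$--invariant $\eta$, fix by Proposition~\ref{embedding} a platonic polyhedron $\A\subset\CW$ conformally embedded with isotropy group $G$, and apply Lemma~\ref{VerticesAristasCaras} to get $V(\A)\cup E(\A)\cup F(\A)\subset\Po_\eta\cup\Z_\eta$. Next I would observe that in any platonic solid at least three faces meet at each vertex and each face has at least three sides, so the $G$--stabilizer of a vertex and of a face centre has order $\geq 3$; hence every point of $V(\A)\cup F(\A)$ is a fixed point of some non--trivial $g\in G$ of order $\geq 3$, and Theorem~\ref{caracterizacion1}(3.b) forces $V(\A)\cup F(\A)\subset\Po_\eta$. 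Since $E(\A)$ is a single $G$--orbit lying in the $G$--invariant set $\Po_\eta\cup\Z_\eta$, it must be entirely in $\Po_\eta$ or entirely in $\Z_\eta$. It then remains to count: as $\mathcal{R}_{\A}$ is a strict fundamental region for $G$, every pole or zero lies in $V(\A)\cup E(\A)\cup F(\A)$ or in the orbit of a unique point of $\widehat{\mathcal{R}}_{\A}$, and by Lemma~\ref{ordenRegionQuasi} such orbits have exactly $Card(G)$ elements. Writing $\ell,\ell'$ for the numbers of poles and zeros in $\widehat{\mathcal{R}}_{\A}$ and reading $v,e,f,Card(G)$ off Table~\ref{tablaBuena} --- in particular $v-e+f=2$ and $Card(G)=2e$ --- I would expand $Card(\Po_\eta)-Card(\Z_\eta)=2$: if $E(\A)\subset\Z_\eta$ this gives
$$(v+f+\ell\,Card(G))-(e+\ell'\,Card(G))=2,\qquad\text{so}\qquad(\ell-\ell')\,Card(G)=2-(v-e+f)=0,$$
whence $\ell'=\ell$ and $k=\ell\,Card(G)+v+f$; if $E(\A)\subset\Po_\eta$ it gives
$$(v+f+e+\ell\,Card(G))-\ell'\,Card(G)=2,\qquad\text{so}\qquad(\ell'-\ell)\,Card(G)=(v-e+f)+2e-2=2e,$$
whence $\ell'=\ell+1$ and $k=\ell\,Card(G)+v+f+e$. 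That is the asserted alternative.

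For sufficiency, given such an $\A$ and configuration I would verify the three hypotheses of Theorem~\ref{caracterizacion1}. Conditions~(1) and~(2) are immediate because $\Po_\eta$ and $\Z_\eta$ are unions of $G$--orbits (the orbits $V(\A),F(\A)$, possibly $E(\A)$, and the orbits of the points of $\widehat{\mathcal{R}}_{\A}$ carrying a pole or a zero). For~(3), take a non--trivial $g\in G$: by Lemma~\ref{afirmacion0} it is elliptic, and by Lemma~\ref{ptosfijossonverticesaristascaras} its two fixed points lie in $V(\A)\cup E(\A)\cup F(\A)$. If $g$ has order $\geq 3$ the fixed points cannot be edge centres (the stabilizer of an edge centre has order $2$: an element fixing it fixes the edge setwise, and an order $\geq 3$ such element would have its square fixing three distinct points of the edge, hence be the identity), so they lie in $V(\A)\cup F(\A)\subset\Po_\eta$ and~(3.b) holds; if $g$ has order $2$ they lie in $V(\A)\cup E(\A)\cup F(\A)\subset\Po_\eta\cup\Z_\eta$ and~(3.a) holds. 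Theorem~\ref{caracterizacion1} then gives $G$--invariance of $\eta$.

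For the maximality clause I would note that, for a fixed platonic $G$, the conditions of the present theorem are equivalent to conditions~(1)--(3) of Theorem~\ref{caracterizacion1}, so ``$G$ maximal (in $\PSL$) satisfying them'' means the same thing in both, and the maximality clause of Theorem~\ref{caracterizacion1} transfers verbatim. The last assertion follows because $S_4$ and $A_5$ are maximal among finite subgroups of $\PSL$ (Klein's list: $A_5$ of order $60$ is largest, and $S_4$ is neither cyclic nor dihedral nor a subgroup of $A_5$ since $24\nmid 60$), so a $G$--invariant $\eta$ with $G\cong S_4$ or $A_5$ automatically has isotropy exactly $G$. The step I expect to take the most care is the necessity count: one must confirm \emph{uniformly} over the five platonic solids that vertex and face--centre stabilizers have order $\geq 3$ while edge--centre stabilizers have order $2$, and then see that $v-e+f=2$ together with $Card(G)=2e$ is exactly what collapses Gauss--Bonnet to ``$\ell$ poles and $\ell$ zeros'' or ``$\ell$ poles and $\ell+1$ zeros'' in $\widehat{\mathcal{R}}_{\A}$.
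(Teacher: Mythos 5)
Your proposal is correct and follows essentially the same route as the paper's proof: Proposition~\ref{embedding} plus Lemma~\ref{VerticesAristasCaras} for the necessity, Theorem~\ref{caracterizacion1}(3.b) to force $V(\A)\cup F(\A)\subset\Po_\eta$, Lemma~\ref{ordenRegionQuasi} with Gauss--Bonnet for the count, and verification of conditions (1)--(3) of Theorem~\ref{caracterizacion1} for sufficiency. You merely spell out details the paper leaves implicit (the stabilizer orders at vertices, face centres and edge midpoints, the explicit $v-e+f=2$, $Card(G)=2e$ bookkeeping, and the maximality of $S_4$ and $A_5$ among finite subgroups of $\PSL$), all of which are accurate.
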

\begin{proof}
($\Rightarrow$) By Proposition \ref{embedding} there exists a spherical polyhedra $\A$ such that 
$G$ is its isotropy group.

\noindent
By Lemma \ref{VerticesAristasCaras}, $V(\A)\cup E(\A)\cup F(\A)\subset \Po_\eta\cup\Z_\eta$.

\noindent
Since $\A$ is a platonic polyhedra conformally embedded in $\CW$, 
the only fixed points of $\A$ of order 2 are on $E(\A)$. 
Hence by Theorem \ref{caracterizacion1}.3.b, $V(\A)\cup F(\A)\subset\Po_\eta$.

\noindent
Since $E(\A)=\mathcal{O}(e)$ for $e\in E(\A)$ then $E(\A)$ is either entirely contained in $\Z_\eta$ or entirely 
contained in $\Po_\eta$ which give rise to conditions (a) and (b) respectively.

\noindent
To finish the proof we need to examine how many zeros and poles are in the quasi--fundamental region.

\noindent
By Lemma \ref{ordenRegionQuasi} if $a\in\widehat{\mathcal{R}}_{\A}$ then $Card(\mathcal{O}(a))=Card(G)$. 
Hence the corresponding formula for the number $k$ of poles follows immediately.

\smallskip
\noindent
($\Leftarrow$)
Assuming $V(\A)\cup F(\A)\subset\Po_{\eta}$ and either (a) or (b) above, the conditions (1)--(3) of 
Theorem \ref{caracterizacion1} are satisfied. Hence the 1--form $\eta$ is $G$--invariant.
\end{proof}

\begin{remark} When constructing the 1--form the following choices are to be made: 
\begin{enumerate}
\item Either (a) or (b) can occur (but not both). This choice determines the non negative integer $\ell$, 
that satisfies the corresponding relation with the number of poles $k$ of $\eta$.
\item The placement of the $\ell$ poles (and the corresponding zeros) inside 
$\widehat{\mathcal{R}}_{\A}$ is arbitrary, each one giving rise to a $G$--invariant 1--form $\eta$.
\item Case (a) with $\ell=0$ corresponds to the examples in \S\ref{ejemplos}. 
\end{enumerate}
\end{remark}

Since the dihedron is the dual of the hosohedron, the following theorems for the dihedric case will be stated 
for the dihedron $\mathfrak{D}_{n}$, leaving the case of the dual $\mathfrak{H}_{n}$ for the interested reader.
\begin{theorem}[Classification of 1--forms with simple poles and zeros having isotropy a dihedric subgroup]
\label{teoremaDihedrico}
Let $G<\PSL$ be a subgroup isomorphic to $\DD_{ n}$ with $n\geq 3$. 
Let $\eta$ be a 1--form with simple poles and simple zeros. 
\\
Then the 1--form $\eta$, with $k$ poles and $k-2$ zeros, 
is  $G$--invariant 
if and only if 
there is a dihedron $\A=\mathfrak{D}_{n}$ with isotropy group $G$  
such that one of the following cases is true

\begin{enumerate}

\item[A)]
	\begin{enumerate}
	\item[$\bullet$] $V(\A)\cup F(\A)\subset\Po_{\eta}$, 

	\item[$\bullet$] 
	either
		\begin{enumerate}
		\item[a)]
		$E(\A)\subset\Z_{\eta}$: In which case there are $\ell$ poles and $\ell$ zeros in the quasi--
		fundamental region $\widehat{\mathcal{R}}_{\A}$, 
		for some non negative $\ell$ satisfying $$k=\ell\times Card(G)+v+f, \text{ or}$$

		\item[b)] $E(\A)\subset\Po_{\eta}$: In which case there are $\ell$ poles and $\ell+1$ zeros in the quasi--
		fundamental region $\widehat{\mathcal{R}}_{\A}$,  
		for some non negative $\ell$ satisfying $$k=\ell\times Card(G)+v+f+e.$$ 
	\end{enumerate}
\end{enumerate}

\item[B)]
		\begin{enumerate}
		\item[$\bullet$] $V(\A)\cup E(\A)\subset\Z_{\eta}$ and $F(\A)\subset\Po_{\eta}$

		\item[$\bullet$] There are $\ell$ poles and $\ell-1$ zeros in the quasi--fundamental region 
		$\widehat{\mathcal{R}}_{\A}$,  
		for some non negative $\ell$ satisfying $$k=\ell\times Card(G)+f.$$
	\end{enumerate}
\end{enumerate}
Moreover $G$ is the maximal group, as a subgroup of $\PSL$, satisfying either (A) or (B) if and only if $\eta$ 
has isotropy group $G$.
\end{theorem}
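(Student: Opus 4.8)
The plan is to adapt, almost verbatim, the proof of Theorem~\ref{main2}, the one new feature being that for a dihedron $\mathfrak{D}_{n}$ with $n\geq 3$ the two face centres carry an isotropy subgroup of order $n\geq 3$, whereas every vertex and every edge centre carries an isotropy subgroup of order exactly $2$. It is this last fact that produces the extra case (B), which has no analogue in the platonic setting. The one point requiring genuine care — and the essential difference from both the platonic case and a naive expectation — is a symmetry trick collapsing a fourth a priori combination onto case (A.a); everything else is bookkeeping with Table~\ref{tablaBuena} and Gauss--Bonnet.

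\noindent\textbf{($\Rightarrow$).} By Proposition~\ref{embedding} there is a dihedron $\A_{0}=\mathfrak{D}_{n}$ whose isotropy group is $G$, and by Lemma~\ref{VerticesAristasCaras}, $V(\A_{0})\cup E(\A_{0})\cup F(\A_{0})\subset\Po_{\eta}\cup\Z_{\eta}$. A point of $F(\A_{0})$ is fixed by the order--$n$ rotation generating the cyclic part of $G$; since $n\geq 3$, Theorem~\ref{caracterizacion1}.3.b forces $F(\A_{0})\subset\Po_{\eta}$. Each of $V(\A_{0})$ and $E(\A_{0})$ is a single $G$--orbit, so each lies entirely in $\Po_{\eta}$ or entirely in $\Z_{\eta}$; and since the non--trivial stabilizer of any of their points is an involution, whichever alternative occurs Theorem~\ref{caracterizacion1}.3.a is met. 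This leaves four combinations, always with $F(\A_{0})\subset\Po_{\eta}$: $V(\A_{0}),E(\A_{0})\subset\Po_{\eta}$ (this is (A.b)); $V(\A_{0})\subset\Po_{\eta}$, $E(\A_{0})\subset\Z_{\eta}$ (this is (A.a)); $V(\A_{0}),E(\A_{0})\subset\Z_{\eta}$ (this is (B)); and the remaining $V(\A_{0})\subset\Z_{\eta}$, $E(\A_{0})\subset\Po_{\eta}$. This last one is reduced to (A.a) by replacing $\A_{0}$ by the dihedron $\A=\rho(\A_{0})$, where $\rho$ is the rotation by $\pi/n$ about the axis through the two face centres of $\A_{0}$: working in coordinates in which $G$ is the normal form $\langle T_{7},T_{8}\rangle$ of Lemma~\ref{classFinitos} (so $\rho(z)=e^{i\pi/n}z$), a direct computation gives $\rho T_{7}\rho^{-1}=T_{7}$ and $\rho T_{8}\rho^{-1}(z)=e^{2\pi i/n}/z=T_{7}T_{8}(z)\in G$, hence $\rho G\rho^{-1}=G$; meanwhile $\rho$ interchanges $V(\A_{0})$ with $E(\A_{0})$ and fixes $F(\A_{0})$, so relative to $\A$ the configuration is exactly (A.a). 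Finally, in each of the surviving cases one counts poles and zeros: by Lemma~\ref{ordenRegionQuasi} a point of $\widehat{\R}_{\A}$ has orbit of size $Card(G)=2n$, so $\ell$ poles there contribute $2n\ell$ poles; adding the contributions of $V(\A),E(\A),F(\A)$ read from Table~\ref{tablaBuena} and imposing $Card(\Po_{\eta})-Card(\Z_{\eta})=2$ yields in (A.a) that $\widehat{\R}_{\A}$ carries $\ell$ zeros and $k=2n\ell+v+f$, in (A.b) that it carries $\ell+1$ zeros and $k=2n\ell+v+f+e$, and in (B) that it carries $\ell-1$ zeros and $k=2n\ell+f$ (forcing $\ell\geq 1$), as claimed.

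\noindent\textbf{($\Leftarrow$).} Given a dihedron $\A=\mathfrak{D}_{n}$ with isotropy $G$ together with one of (A.a), (A.b), (B), we verify conditions (1)--(3) of Theorem~\ref{caracterizacion1}. Conditions (1) and (2) hold because in each case $\Po_{\eta}$ and $\Z_{\eta}$ are unions of $G$--invariant sets: $F(\A)$, possibly $V(\A)$ and/or $E(\A)$, and the $G$--orbit of the $\ell$ points chosen in $\widehat{\R}_{\A}$. For condition (3): every non--trivial $g\in G$ is elliptic (Lemma~\ref{afirmacion0}) and its fixed point pair lies in $V(\A)\cup E(\A)\cup F(\A)$ by Lemma~\ref{ptosfijossonverticesaristascaras}; if $g$ has order $\geq 3$ its fixed point set is $F(\A)\subset\Po_{\eta}$, so (3.b) holds, while if $g$ is an involution its fixed point set is contained in $V(\A)\cup E(\A)\cup F(\A)\subset\Po_{\eta}\cup\Z_{\eta}$, so (3.a) holds. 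Hence $\eta$ is $G$--invariant. The maximality clause follows at once from the maximality clause of Theorem~\ref{caracterizacion1}, since the argument above shows that ``$G\cong\DD_{n}$ satisfies (A) or (B) for some dihedron $\mathfrak{D}_{n}$ with isotropy $G$'' is equivalent to ``$G$ satisfies conditions (1)--(3) of Theorem~\ref{caracterizacion1}''. The main obstacle in carrying this out is precisely the $\pi/n$--rotation step: one must check both that it swaps $V(\A)$ with $E(\A)$ and that it normalises $G$, which is what legitimises listing only three cases instead of four.
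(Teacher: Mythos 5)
Your proposal is correct and follows essentially the same route as the paper: force $F(\A)\subset\Po_{\eta}$ via Theorem \ref{caracterizacion1}.3.b, enumerate the four placements of $V(\A)$ and $E(\A)$, collapse the case $V\subset\Z_{\eta}$, $E\subset\Po_{\eta}$ onto (A.a) by the $\pi/n$ rotation about the face-center axis, count with Lemma \ref{ordenRegionQuasi} and Gauss--Bonnet, and get the converse from Theorem \ref{caracterizacion1}. Your explicit verification that the rotation normalizes $G$ and swaps $V(\A)$ with $E(\A)$ is a welcome detail the paper leaves implicit, but it is not a different argument.
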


\begin{proof}
($\Rightarrow$) By Proposition \ref{embedding} there is a dihedron $\A=\mathfrak{D}_{n}$ such that $G$ is its 
isotropy group.

\noindent
By Lemma \ref{VerticesAristasCaras}, $V(\A)\cup E(\A)\cup F(\A)\subset \Po_\eta\cup\Z_\eta$.

\noindent
Since $\A$ is a dihedron, $F(\A)=\{x,y\}$ are the fixed points of the order $n$ elements in 
$\DD_{ n}$. 
Thus, since $n\geq 3$, Theorem \ref{caracterizacion1} requires that $F(\A)\subset\Po_\eta$. 

\noindent
Without loss of generality we can assume that the dihedron $\A$ has $F(\A)=\{0,\infty\}\subset\CW$, hence in 
particular the order $n$ elements of $G$ will be rotations by $2\pi/n$.

\noindent
If $V(\A)\subset\Po_\eta$ we have condition $V(\A)\cup F(\A)\subset\Po_{\eta}$. In which case either 
$E(\A)\subset\Z_{\eta}$ or 
$E(\A)\subset\Po_{\eta}$ that is conditions (A.a) and (A.b) respectively.

\noindent
If $V(\A)\subset\Z_\eta$ we have two cases:
either $E(\A)\subset\Z_{\eta}$ giving rise to condition (B), or $E(\A)\subset\Po_{\eta}$ which is equivalent to 
conditions (A.a) with a different dihedron $\A'$ 
which can be obtained from the original $\A$ by rotating by an angle of $\pi/n$, 
around the fixed points $F(\A)$.

\noindent
To finish the proof we need to examine how many zeros and poles are in the quasi--fundamental region.

\noindent
By Lemma \ref{ordenRegionQuasi} if $a\in\widehat{\mathcal{R}}_{\A}$ then $Card(\mathcal{O}(a))=Card(G)$. Hence the corresponding formula for the number $k$ of poles follows immediately for each case.

\smallskip
\noindent
($\Leftarrow$)
Once again, 
given any of the corresponding cases of Theorem \ref{teoremaDihedrico}, the conditions (1)--(3) of Theorem \ref{caracterizacion1} are satisfied. Hence the 1--form $\eta$ is $G$--invariant.
\end{proof}

\begin{theorem}[Case for $\DD_{ 2}$]\label{D22}\hfill\\
Let $G<\PSL$ be a subgroup isomorphic to $\DD_{ 2}$.
Let $\eta$ be a 1--form with simple poles and simple zeros.
\\
Then the 1--form $\eta$, with $k$ poles and $k-2$ zeros, is $G$--invariant if and only if there is a dihedron 
$\mathfrak{D}_{2}$ such that
\begin{enumerate}

\item[A)]
	\begin{enumerate}
	\item[$\bullet$] $V(\A)\cup F(\A)\subset\Po_{\eta}$, 

	\item[$\bullet$] 
	either
		\begin{enumerate}
		\item[a)] 
		$E(\A)\subset\Z_{\eta}$: In which case there are $\ell$ poles and $\ell$ zeros in the 
		quasi--fundamental region $\widehat{\mathcal{R}}_{\A}$, 
		for some non negative $\ell$ satisfying $$k=\ell\times Card(G)+v+f, \text{ or}$$

		\item[b)] $E(\A)\subset\Po_{\eta}$: In which case there are $\ell$ poles and $\ell+1$ zeros in the quasi--
		fundamental region $\widehat{\mathcal{R}}_{\A}$,  
		for some non negative $\ell$ satisfying $$k=\ell\times Card(G)+v+f+e.$$ 
		\end{enumerate}
	\end{enumerate}

\item[B)] 
	\begin{enumerate}
		\item[$\bullet$] $V(\A)\cup E(\A)\subset\Z_{\eta}$ and $F(\A)\subset\Po_{\eta}$

		\item[$\bullet$] There are $\ell$ poles and $\ell-1$ zeros in the quasi--fundamental region 
		$\widehat{\mathcal{R}}_{\A}$,  
		for some non negative $\ell$ satisfying $$k=\ell\times Card(G)+f.$$
	\end{enumerate}
	
\item[C)] 
	\begin{enumerate}
		\item[$\bullet$] $V(\A)\cup E(\A)\cup F(\A)\subset\Z_{\eta}$, 

		\item[$\bullet$] There are $\ell$ poles and $\ell-2$ zeros in the quasi--fundamental region 
		$\widehat{\mathcal{R}}_{\A}$,
		for some non--negative $\ell$ satisfying $$k=(\ell)\times Card(G)=4\times\ell.$$
	\end{enumerate}

\end{enumerate}
Moreover $G$ is the maximal subgroup of $\PSL$ satisfying one of (A)--(C) if and only if $\eta$ has isotropy 
group $G$.
\end{theorem}
\begin{proof}
($\Rightarrow$)
By Proposition \ref{embedding} there is a dihedron $\A=\mathfrak{D}_{2}$ such that $G=\DD_{2}$ is its 
isotropy group.

\noindent
By Lemma \ref{VerticesAristasCaras}, $V(\A)\cup E(\A)\cup F(\A)\subset \Po_\eta\cup\Z_\eta$.

\noindent
However, since all the non--trivial elements of $G$ have order 2, {\it a--priori} there is no way to know which of 
the sets $V(\A)$, $E(\A)$, $F(\A)$ are subsets of $\Z_{\eta}$. Thus we have to consider all the possible cases:
\begin{itemize}
\item None of $V(\A)$, $E(\A)$, $F(\A)$ are subsets of $\Z_{\eta}$. This is case (A.b).

\item Only one of $V(\A)$, $E(\A)$, $F(\A)$ is a subset of $\Z_{\eta}$. Because of the high symmetry of the 
action of $G=\DD_{2}$ on $\A=\mathfrak{D}_{2}$ all 3 possible cases are the same, so we assume without 
loss of generality that $E(\A)\subset\Z_{\eta}$, this is case (A.a).

\item Exactly two of $V(\A)$, $E(\A)$, $F(\A)$ are subsets of $\Z_{\eta}$. Once again all 3 possible cases are 
the same so without loss of generality we assume that $V(\A) \cup E(\A) \subset \Z_{\eta}$, this is case (B).

\item $V(\A) \cup E(\A) \cup F(\A) \subset \Z_{\eta}$. This is case (C).
\end{itemize}

\noindent
The rest of the proof is as in the previous cases.
\end{proof}

\begin{example}\label{ejemploTetra}
Let 
$$\eta(z)
=\frac{ z^3-\frac{1}{\sqrt{8}} }{ z^6-\sqrt{50} z^3-1 }\, dz
.$$ 
The phase portrait of the vector field associated to $\eta$ can be seen in Figure \ref{TetraContra1}.
It can be readily seen that the poles and zeros are invariant under the isotropy group $G\cong A_{4}$ of a 
Tetrahedron $\A$, but $\eta$ is not invariant under $G$, see Figure \ref{TetraContra1}.a. 

\noindent
In fact, it's isotropy group is $\DD_{2}$ in accordance with Theorem \ref{D22} case (A.b) with 
$\A=\mathfrak{D}_{2}$ and $\ell=0$, see Figure \ref{TetraContra1}.b.
\end{example}

\begin{figure}[htbp]
\begin{center}
\includegraphics[height=0.45\textwidth]{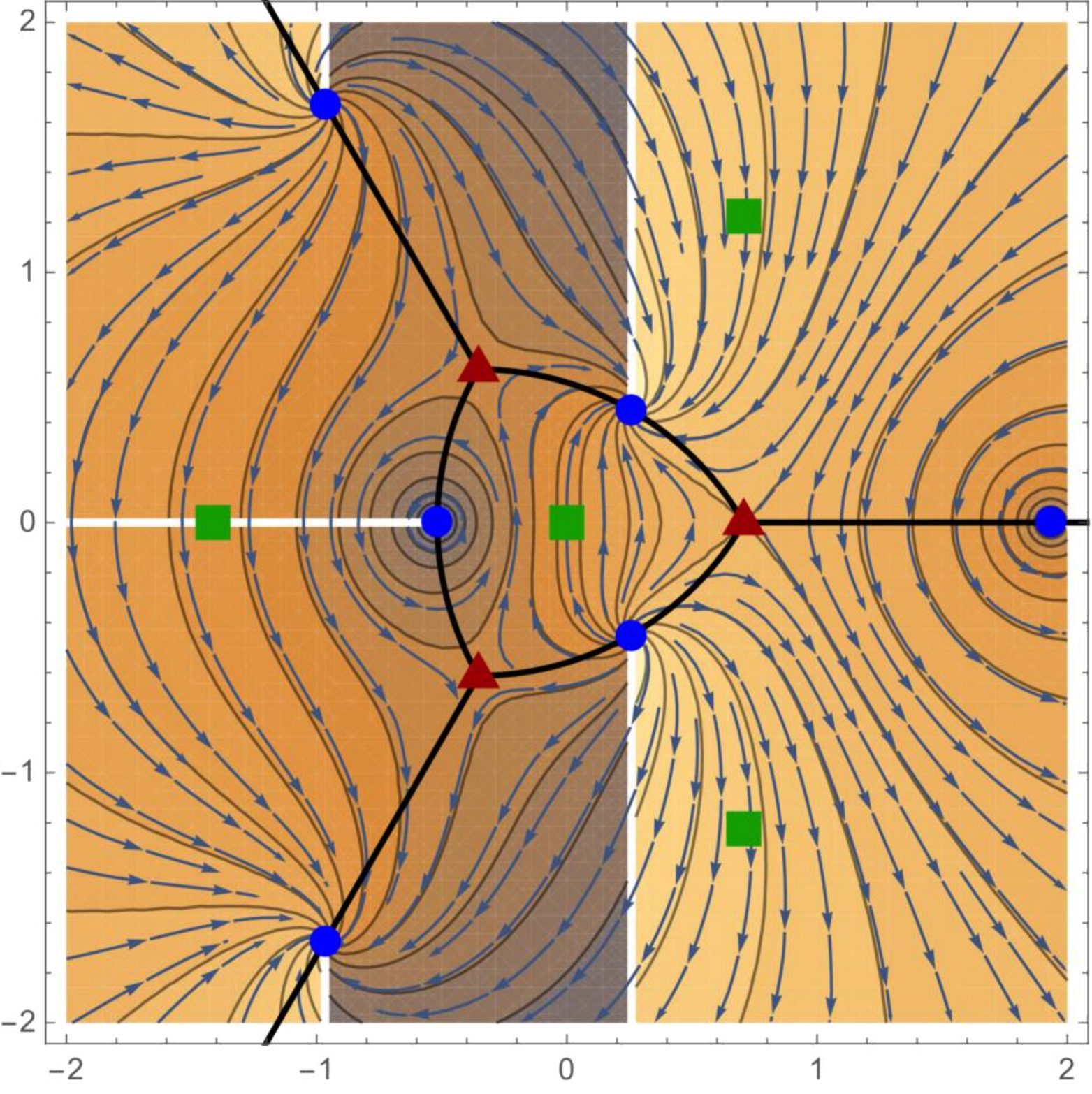}
\hskip 10pt
\includegraphics[height=0.45\textwidth]{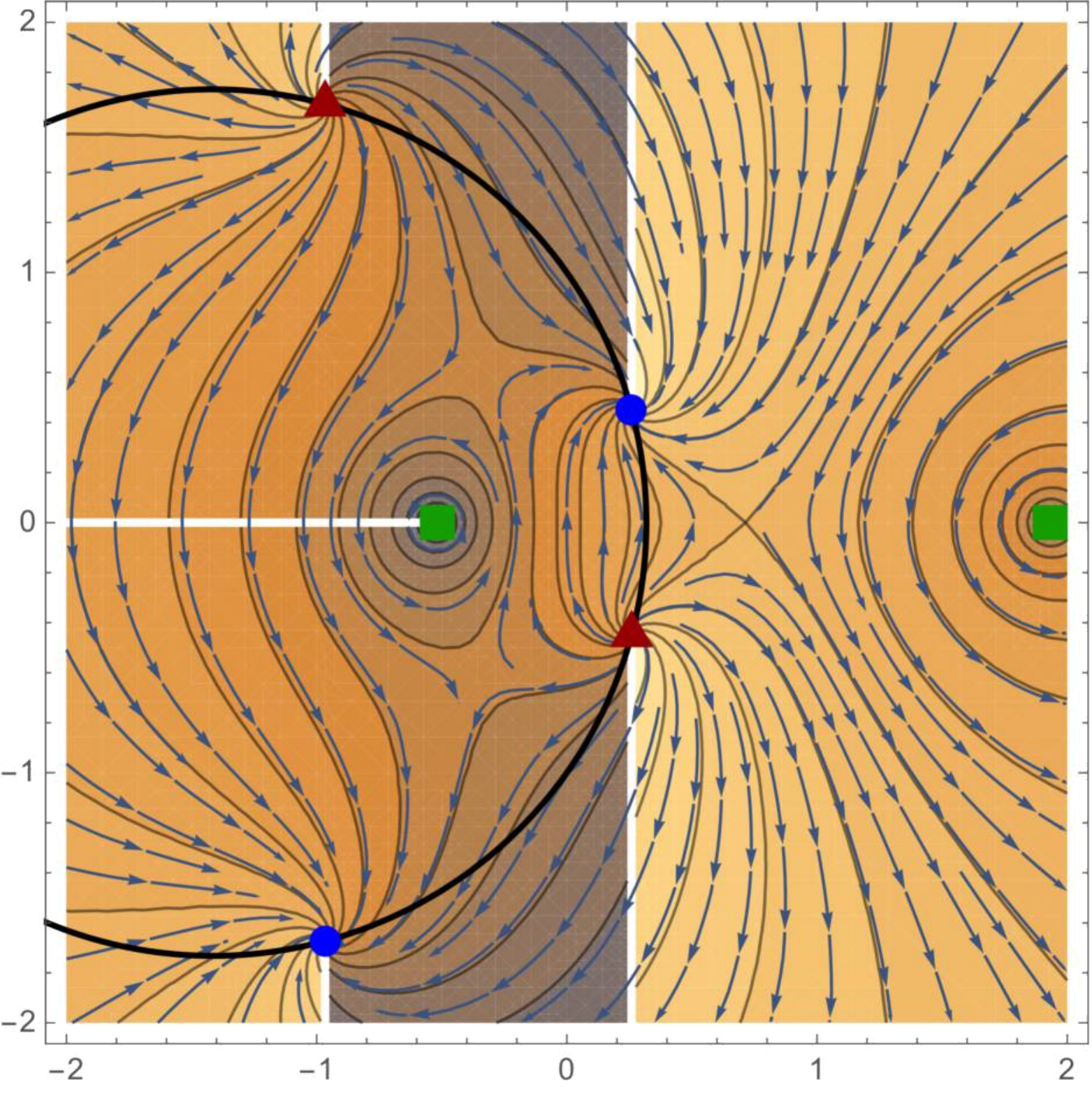}
\caption{Phase portrait of $\eta$ as in Example \ref{ejemploTetra}.
(a) Note that the poles and zeros are invariant under the isotropy group $G\cong A_{4}$ of a tetrahedron. 
(b) However the isotropy group of $\eta$ is in fact $\DD_{2}$. In both cases of the spherical polyhedra, vertices are represented by (red) triangles, centers of edges by (blue) dots and centers of faces by (green) squares.
}
\label{TetraContra1}
\end{center}
\end{figure}

\subsubsection{The case of $G$ isomorphic to the cyclic group $\ZZ_{n}$ for $n\geq 2$}\label{ciclico}
Since there is no spherical polyhedra $\A$ whose isotropy group is isomorphic to $\ZZ_{n}$ for $n\geq2$, we 
can not apply the techniques developed in the previous section to obtain a characterization of the 1--forms 
$\eta$ with isotropy groups isomorphic to $\ZZ_{n}$.

However, when $G\cong\ZZ_{n}$, with $n\geq2$, is a subgroup of $\PSL$, we can recall 
Definition \ref{defRA}.3 of a fundamental region $\R_{\ZZ_{n}}$ and define a \emph{quasi--fundamental region 
for $G$} as 

\centerline{
$\widehat{\R}_{\ZZ_n}=\R_{\ZZ_n}\backslash\{x,y\}$
}

\noindent
where $\{x,y\}\subset\CW$ are the fixed points of $G$ 
(if $g\in G\cong\ZZ_{n}$ is a generator, then $g$ is an order $n$ elliptic element that fixes $\{x,y\}\subset\CW$; 
in fact $\{x,y\}\subset\CW$ are the fixed points of $G$).

It will be useful to note that even though the hosohedra $\A=\mathfrak{H}_{n}$ is $G$--invariant with 
$G\cong\ZZ_{n}$ the fundamental and quasi--fundamental region of $\A$ do not agree with the fundamental 
and quasi--fundamental region of $G$. 
With this in mind we will use the hosohedra $\A=\mathfrak{H}_{n}$ and the quasi--fundamental region 
$\widehat{\R}_{\ZZ_n}$ of $G$ in the statements of the theorems in this section.

\smallskip
\noindent
For the case $n=2$ we have.
\begin{theorem}[Classification of 1--forms with simple poles and simple zeros having isotropy $G\cong\ZZ_2$]
\label{Z2}\hfill\\
Let $G<\PSL$ be a subgroup isomorphic to $\ZZ_{2}$. 
Let $\eta$ be a 1--form with simple poles and simple zeros. 

\noindent
Then the 1--form $\eta$, with $k$ poles and $k-2$ zeros, 
is  $G$--invariant 
if and only if 
there is a $G$--invariant hosohedra $\A=\mathfrak{H}_{2}$ such that
one of the following cases is true.
\begin{enumerate}
\item[A)]
	\begin{enumerate}
	\item[$\bullet$] $V(\A)\subset\Po_{\eta}$.
	\item[$\bullet$] There are $\ell$ poles and $\ell$ zeros in a quasi--fundamental region $\widehat{\R}_{\ZZ_{2}}$ for 
	some positive 
	$\ell\geq1$,
	satisfying 
	$$k=2\times\ell+2.$$ 
	\end{enumerate}
\item[B)]
	\begin{enumerate}
	\item[$\bullet$] $V(\A)\subset\Z_{\eta}$.
	\item[$\bullet$] There are $\ell$ poles and $\ell-2$ zeros in a quasi--fundamental region $\widehat{\R}_{\ZZ_{2}}$ 
	for some positive 
	$\ell\geq2$,
	satisfying 
	$$k=2\times\ell.$$ 
	\end{enumerate}
\item[C)]
	\begin{enumerate}
	\item[$\bullet$] $V(\A)=\{x,y\}$, $x\in\Po_{\eta}$, $y\in\Z_{\eta}$.
	\item[$\bullet$] There are $\ell$ poles and $\ell-1$ zeros in a quasi--fundamental region $\widehat{\R}_{\ZZ_{2}}$ 
	for some positive 
	$\ell\geq1$,
	satisfying 
	$$k=2\times\ell+1.$$ 
	\end{enumerate}
\end{enumerate}
Moreover $G$ is the maximal subgroup of $\PSL$ satisfying one of (A)--(C) if and only if $\eta$ has isotropy 
group $G$.
\end{theorem}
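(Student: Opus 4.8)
The plan is to mimic the structure of the proofs of Theorems \ref{main2}, \ref{teoremaDihedrico} and \ref{D22}, but using the quasi--fundamental region $\widehat{\R}_{\ZZ_2}$ of $G$ rather than that of a M\"obius polyhedra, since no spherical polyhedra has isotropy $\ZZ_2$. First I would invoke Remark \ref{todaselipticas} to normalize: the nontrivial element $g$ of $G\cong\ZZ_2$ is elliptic of order $2$ with a pair of fixed points $\{x,y\}$, and after conjugating by a suitable $T\in\PSL$ we may take $\widehat g(z)=-z$, so $\{x,y\}=\{0,\infty\}$, which we identify with $V(\A)$ for the hosohedron $\A=\mathfrak{H}_2$. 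By Proposition \ref{encajeconforme} (or directly Lemma \ref{pushForma}) it suffices to classify in this normalized picture.

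For the forward direction ($\Rightarrow$), assume $\eta$ is $G$--invariant. By Theorem \ref{caracterizacion1}.3.a (applied to the order--$2$ element $g$), the fixed point set $\{x,y\}=V(\A)$ must satisfy $\{x,y\}\subset\Po_\eta\cup\Z_\eta$. Since there is only one such pair and each fixed point is independently a pole or a zero, there are exactly three cases: both in $\Po_\eta$ (case A), both in $\Z_\eta$ (case B), or one in each (case C) — this is the content of the trichotomy. In each case, $\widehat g(z)=-z$ acts freely on $\CW\setminus\{0,\infty\}$, so by (the argument of) Lemma \ref{ordenRegionQuasi} every point $a\in\widehat{\R}_{\ZZ_2}$ has $Card(\mathcal O(a))=2=Card(G)$; hence if there are $\ell$ poles and $m$ zeros in $\widehat{\R}_{\ZZ_2}$ the total poles are $2\ell+d_1$ and total zeros $2m+d_2$, where $(d_1,d_2)$ records how many of the two fixed points are poles resp.\ zeros: $(2,0)$, $(0,2)$, or $(1,1)$. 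Gauss--Bonnet $Card(\Po_\eta)-Card(\Z_\eta)=2$ then forces, respectively, $\ell-m=0$, $\ell-m=2$, $\ell-m=1$, yielding the stated relations $k=2\ell+2$, $k=2\ell$, $k=2\ell+1$, together with the positivity constraints $\ell\geq1$, $\ell\geq2$, $\ell\geq1$ needed so that $\eta$ actually has a pole not at the fixed points (equivalently, so the form is not forced into a continuous--isotropy situation; note $k\geq3$ by Remark \ref{separacionviak}).

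For the converse ($\Leftarrow$), in each of cases (A)--(C) one checks that conditions (1), (2), (3) of Theorem \ref{caracterizacion1} hold: (1) and (2) hold because placing poles/zeros symmetrically in $\widehat{\R}_{\ZZ_2}$ and its mirror image under $z\mapsto -z$, together with the prescribed behavior at $\{0,\infty\}$, makes $\Po_\eta$ and $\Z_\eta$ invariant under $g$; condition (3) is exactly the requirement $\{x,y\}=V(\A)\subset\Po_\eta\cup\Z_\eta$, which holds by construction in all three cases (this is the $g^2=\mathrm{id}$ alternative (3.a)). Hence $\eta$ is $G$--invariant. Finally, the maximality clause follows from the general minimality statement in Theorem \ref{caracterizacion1}: $\eta$ has isotropy exactly $G$ iff $G$ is maximal among subgroups of $\PSL$ satisfying (1)--(3), and one translates this back through the normalization. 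The main obstacle I anticipate is bookkeeping the boundary identifications of $\widehat{\R}_{\ZZ_2}$ correctly — making sure that zeros or poles placed on the boundary of the fundamental region are not double--counted, and correctly distinguishing the fundamental region $\widehat{\R}_{\ZZ_2}$ of $G$ (a "half--sphere" type region) from the smaller quasi--fundamental region of the hosohedron $\mathfrak{H}_2$, as flagged in the paragraph preceding the theorem.
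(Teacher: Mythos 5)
Your proposal is correct and follows essentially the same route as the paper: build the hosohedron $\mathfrak{H}_{2}$ with vertices at the fixed points of $G$, use Theorem \ref{caracterizacion1}.3.a to force the trichotomy (A)--(C) at $V(\A)$, count orbits in $\widehat{\R}_{\ZZ_{2}}$ together with Gauss--Bonnet to get the formulas for $k$, and deduce the converse and the maximality clause from Theorem \ref{caracterizacion1}. Your explicit bookkeeping of the cases $(d_1,d_2)$ and the lower bounds on $\ell$ is just a more detailed write-up of the paper's ``direct application of Gauss--Bonnet,'' so no substantive difference.
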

\begin{proof}
($\Rightarrow$)
The existence of the hosohedra $\A=\mathfrak{H}_{2}$ is assured by placing the vertices of $\A$ at the fixed 
points $\{x,y\}\subset\CW$ of $G$; 
for the edges consider a circle on $\CW$ containing the vertices $\{x,y\}$; 
the faces then are the complement, in $\CW$, of the vertices and the edges.

\noindent
From Theorem \ref{caracterizacion1}.3.a, conditions (A), (B) and (C) on the vertices $V(\A)$ follow, moreover 
since $V(\A)$ consists of exactly two points, these are the only possibilities for $V(\A)$.

\noindent
Finally by a direct application of Gauss--Bonnet the conditions on the quasi--fundamental regions 
$\widehat{\R}_{\ZZ_{2}}$ for (A), (B) and (C) follow immediately.

\smallskip
\noindent
($\Leftarrow$)
This implication is a direct consequence of Theorem \ref{caracterizacion1}.3.b, the action of $G$ on $\eta$, the 
action of $G$ on $\CW$ and the definition of $\widehat{\R}_{\ZZ_n}$.
\end{proof}

\smallskip
The case of $G\cong\ZZ_{n}$ with $n\geq 3$ now follows immediately.
\begin{theorem}[Classification of 1--forms with simple poles and simple zeros having isotropy $G\cong\ZZ_{n}$ 
with $n\geq 3$]\label{main3b}\hfil\\
Let $G<\PSL$ be a subgroup isomorphic to $\ZZ_{n}$ with $n\geq 3$. 
Let $\eta$ be a 1--form with simple poles and simple zeros and let $\ell\geq1$.

\noindent
Then the 1--form $\eta$, with $k=n \ell+2$ poles and $k-2=n \ell$ zeros, 
is  $G$--invariant if and only if 
there is a $G$--invariant hosohedra $\A=\mathfrak{H}_{n}$ such that
\begin{enumerate}
\item[$\bullet$] $V(\A)\subset\Po_{\eta}$.
\item[$\bullet$] There are exactly $\ell$ poles and $\ell$ zeros in a quasi--fundamental region $\widehat{\R}_{\ZZ_n}$.

\end{enumerate}
Moreover $G$ is the maximal subgroup of $\PSL$ satisfying the above conditions if and only if $\eta$ has 
isotropy group $G$.
\end{theorem}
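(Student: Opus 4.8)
The plan is to read the statement off Theorem~\ref{caracterizacion1}, following the scheme used for $\ZZ_{2}$ in Theorem~\ref{Z2} but exploiting that a generator of $G\cong\ZZ_{n}$ with $n\geq 3$ is an elliptic element of order $\geq 3$: condition (3.b) of Theorem~\ref{caracterizacion1} then forces \emph{both} of its fixed points into $\Po_{\eta}$, which collapses the three possibilities of Theorem~\ref{Z2} to the single one in the statement.

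For the ($\Rightarrow$) direction I would start with a generator $g$ of $G$, use Lemma~\ref{afirmacion0} to see that it is elliptic of order $n$ with a pair of fixed points $\{x,y\}\subset\CW$ (which are precisely the fixed points of $G$), and build the hosohedron $\A=\mathfrak{H}_{n}$ with vertex set $V(\A)=\{x,y\}$, edges along a circle on $\CW$ through $\{x,y\}$, and faces the two complementary regions; this $\A$ is $G$--invariant. Applying condition (3.b) of Theorem~\ref{caracterizacion1} to $g$ gives $V(\A)=\{x,y\}\subset\Po_{\eta}$, and since $V(\A)$ is exactly these two points this is the only option, so none of the other configurations of Theorem~\ref{Z2} can occur. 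After conjugating so that $\{x,y\}=\{0,\infty\}$ and $g(z)=\e^{i 2\pi/n}z$, every orbit off $\{0,\infty\}$ has exactly $n$ elements and meets the fundamental region $\R_{\ZZ_{n}}$, hence also $\widehat{\R}_{\ZZ_{n}}=\R_{\ZZ_{n}}\backslash\{0,\infty\}$, in a single point; since $\{0,\infty\}\subset\Po_{\eta}$ and poles and zeros are disjoint, the $k-2=n\ell$ poles and the $n\ell$ zeros lying off $\{0,\infty\}$ break into $\ell$ full orbits each, i.e.\ contribute exactly $\ell$ poles and $\ell$ zeros in $\widehat{\R}_{\ZZ_{n}}$. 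Here $\ell\geq 1$ is automatic, since $\ell=0$ would force $k=2$ and hence isotropy $\CC^{*}$ by Remark~\ref{separacionviak}.1.

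For the ($\Leftarrow$) direction I would simply verify conditions (1)--(3) of Theorem~\ref{caracterizacion1}. Invariance of $\Po_{\eta}$ and $\Z_{\eta}$ is immediate because $\widehat{\R}_{\ZZ_{n}}$ is a fundamental region for the action of $G$ on $\CW\backslash\{x,y\}$ and $\{x,y\}$ is $G$--fixed; condition (3) holds because every non--trivial element of $G$ is a power $g^{j}$ with the same fixed set $\{x,y\}$, so (3.b) for the powers of order $\geq 3$ and (3.a) for the order--$2$ power $g^{n/2}$ (only when $n$ is even) are both granted by $V(\A)\subset\Po_{\eta}$. The maximality equivalence is inherited verbatim from the ``moreover'' clause of Theorem~\ref{caracterizacion1}, since by the two implications just established the conditions on $\A$ together with the orbit count are equivalent to conditions (1)--(3) for this particular $G$.

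I do not expect a genuine obstacle; the argument is a bookkeeping translation of Theorem~\ref{caracterizacion1}. The one point that deserves care --- flagged in the remark preceding the theorem --- is that the counting must be done in the quasi--fundamental region $\widehat{\R}_{\ZZ_{n}}$ \emph{of the group} $G$, which is \emph{not} the quasi--fundamental region $\widehat{\R}_{\A}$ of the hosohedron $\mathfrak{H}_{n}$; keeping these two regions apart is precisely what makes the relation $k=n\ell+2$ come out right.
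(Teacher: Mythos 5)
Your proposal is correct and follows essentially the same route as the paper: construct the hosohedron $\mathfrak{H}_{n}$ with vertices at the fixed points of $G$, invoke Theorem \ref{caracterizacion1}.3 (the order $n\geq 3$ of a generator forcing both fixed points into $\Po_{\eta}$, thereby excluding the analogues of cases (B) and (C) of Theorem \ref{Z2}), and count orbits in the quasi--fundamental region $\widehat{\R}_{\ZZ_{n}}$. You simply spell out the orbit bookkeeping and the verification of conditions (1)--(3) in the converse more explicitly than the paper does, including the correct distinction between $\widehat{\R}_{\ZZ_{n}}$ and $\widehat{\R}_{\A}$.
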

\begin{proof}
Once again the existence of the hosohedra $\A=\mathfrak{H}_{n}$ is assured as in the case $n=2$ by placing 
the vertices of $\A$ on the unique fixed points $\{x,y\}\subset\CW$ of $G$; the edges being a circle containing 
the vertices; the faces being the complement of vertices and edges.

\noindent
The result now follows as an immediate consequence of Theorem \ref{caracterizacion1}.3 by noticing that 
since the order of any generator of $G$ is $n\geq 3$ then cases (B) and (C) of Theorem \ref{Z2} can not occur.
\end{proof}

\begin{remark}
Noting that the fixed points $\{x,y\}\subset\CW$ of $G\cong\ZZ_{n}$ provide us with the family of hosohedra 
$\{\A=\mathfrak{H}_{n}\}$ we can restate the above theorems in terms of the fixed points as follows.
\end{remark}
\smallskip

\begin{theorem}[Case $\ZZ_2$ revisited]
\hfill \\
Let $G<\PSL$ be a subgroup isomorphic to $\ZZ_{2}$. 
Let $\eta$ be a 1--form with simple poles and simple zeros. 

\noindent
Then the 1--form $\eta$, with $k$ poles and $k-2$ zeros, 
is  $G$--invariant 
if and only if 
one of the following cases is true.
\begin{enumerate}
\item[A)]
	\begin{enumerate}
	\item[$\bullet$] The fixed points $\{x,y\}\subset\CW$ of $G$ are poles.
	\item[$\bullet$] There are $\ell$ poles and $\ell$ zeros in a quasi--fundamental region $\widehat{\R}_{\ZZ_{2}}$ for 
	some positive 
	$\ell\geq1$,
	satisfying 
	$$k=2\times\ell+2.$$ 
	\end{enumerate}
\item[B)]
	\begin{enumerate}
	\item[$\bullet$] The fixed points $\{x,y\}\subset\CW$ of $G$ are zeros.
	\item[$\bullet$] There are $\ell$ poles and $\ell-2$ zeros in a quasi--fundamental region $\widehat{\R}_{\ZZ_{2}}$ 
	for some positive 
	$\ell\geq2$,
	satisfying 
	$$k=2\times\ell.$$ 
	\end{enumerate}
\item[C)]
	\begin{enumerate}
	\item[$\bullet$] The fixed points $\{x,y\}\subset\CW$ of $G$ are exactly a pole and a zero.
	\item[$\bullet$] There are $\ell$ poles and $\ell-1$ zeros in a quasi--fundamental region $\widehat{\R}_{\ZZ_{2}}$ 
	for some positive 
	$\ell\geq1$,
	satisfying 
	$$k=2\times\ell+1.$$ 
	\end{enumerate}
\end{enumerate}
Moreover $G$ is the maximal subgroup of $\PSL$ satisfying one of (A)--(C) if and only if $\eta$ has isotropy 
group $G$.
\end{theorem}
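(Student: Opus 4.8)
The plan is to deduce this restatement directly from Theorem \ref{Z2}, by identifying the vertex set of the auxiliary hosohedron with the fixed-point set of $G$. First I would recall that a generator $g$ of $G\cong\ZZ_{2}$ is, by Lemma \ref{afirmacion0}, an elliptic transformation, hence has exactly two fixed points $\{x,y\}\subset\CW$, and these are by definition the fixed points of $G$. As in the proof of Theorem \ref{Z2}, I would then build a $G$--invariant hosohedron $\A=\mathfrak{H}_{2}$ by declaring $V(\A)=\{x,y\}$, taking for the single edge any circle through $x$ and $y$, and letting the two faces be the components of its complement in $\CW$. Every such circle is automatically $g$--invariant: in the normalization $g(z)=-z$ one has $\{x,y\}=\{0,\infty\}$, and the circles through $0$ and $\infty$ are the straight lines through the origin, each of which is fixed by $g$. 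With this choice $V(\A)$ is literally the fixed-point set of $G$.

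Next I would match the three alternatives. Case (A) of the present statement --- both fixed points are poles --- is exactly $V(\A)\subset\Po_{\eta}$, i.e.\ case (A) of Theorem \ref{Z2}; case (B) --- both fixed points are zeros --- is $V(\A)\subset\Z_{\eta}$, case (B) of Theorem \ref{Z2}; and case (C) --- one pole and one zero --- is the mixed case (C) of Theorem \ref{Z2}. The counting conditions on the quasi--fundamental region are unchanged, because $\widehat{\R}_{\ZZ_{2}}$ is the same object in both formulations (Definition \ref{defRA}.3 together with the prescription $\widehat{\R}_{\ZZ_{n}}=\R_{\ZZ_{n}}\setminus\{x,y\}$ of this section) and makes no reference to the hosohedron. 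Hence the biconditional and the numerical relations $k=2\ell+2$, $k=2\ell$, $k=2\ell+1$ transfer verbatim, and so does the maximality clause, which is word-for-word the same as in Theorem \ref{Z2}.

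The only point requiring a remark rather than a computation is that the hosohedron $\mathfrak{H}_{2}$ used as scaffolding in Theorem \ref{Z2} has full isotropy group $\DD_{2}\supsetneq\ZZ_{2}$; this is harmless, since Theorem \ref{Z2} only asks for a $G$--invariant hosohedron (not one whose isotropy group equals $G$), and the construction above produces one whose vertex set is prescribed to be the fixed-point pair of $G$. I expect no genuine obstacle here: the argument is a dictionary translation of Theorem \ref{Z2}, whose content has already been established.
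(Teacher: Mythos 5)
Your proposal is correct and matches the paper's intent exactly: the paper gives no separate proof of this restatement, presenting it (after the preceding remark) as an immediate translation of Theorem \ref{Z2} obtained by taking the hosohedron of that theorem to have its vertices at the fixed points $\{x,y\}$ of $G$, which is precisely your dictionary. Your additional observation that Theorem \ref{Z2} only requires a $G$--invariant hosohedron (not one with isotropy exactly $G$) and that the proof of Theorem \ref{Z2} already constructs it with $V(\A)=\{x,y\}$ is the right way to justify that the translation is loss-free.
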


\begin{theorem}[Case $G\cong\ZZ_{n}$ with $n\geq 3$ revisited]\hfill\\ 
Let $G<\PSL$ be a subgroup isomorphic to $\ZZ_{n}$ with $n\geq 3$. 
Let $\eta$ be a 1--form with simple poles and simple zeros and let $r\geq1$.

\noindent
Then the 1--form $\eta$, with $k=n r+2$ poles and $k-2=n r$ zeros, 
is  $G$--invariant if and only if
$\eta$ has 
\begin{enumerate}
\item[$\bullet$]  two poles at the fixed points $\{x,y\}\subset\CW$ of 
$G\cong\ZZ_n$,
\item[$\bullet$]  exactly $r$ poles and $r$ zeros in a quasi--fundamental region $\widehat{\R}_{\ZZ_n}$.

\end{enumerate}
Moreover $G$ is the maximal subgroup of $\PSL$ satisfying the above conditions if and only if $\eta$ has 
isotropy group $G$.
\end{theorem}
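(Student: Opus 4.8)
The plan is to deduce this statement directly from Theorem \ref{main3b}, which already classifies the $G$--invariant 1--forms for $G\cong\ZZ_n$ with $n\ge 3$ in terms of a $G$--invariant hosohedron $\A=\mathfrak{H}_n$. The present statement merely rephrases the hypothesis ``$V(\A)\subset\Po_\eta$'' intrinsically, in terms of the fixed points of $G$, so the proof amounts to matching the two formulations term by term.

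First I would record the dictionary between the two descriptions. If $g\in G\cong\ZZ_n$ is a generator, then by Lemma \ref{afirmacion0} and Remark \ref{todaselipticas} $g$ is elliptic of order $n$ and has exactly two fixed points $\{x,y\}\subset\CW$, which are moreover the fixed points of every non--trivial element of $G$. By the definition of the hosohedron $\mathfrak{H}_n$ and the remark following it, placing the vertices of $\A$ at $\{x,y\}$ produces a $G$--invariant hosohedron with $V(\A)=\{x,y\}$; and any $G$--invariant hosohedron arises this way, the only remaining freedom being the choice of the circle through $\{x,y\}$ forming the edges (all such being conjugate by a rotation about $\{x,y\}$, i.e.\ by an element commuting with $G$, cf.\ Proposition \ref{encajeconforme}). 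In particular the condition ``$V(\A)\subset\Po_\eta$'' of Theorem \ref{main3b} is equivalent to ``the fixed points $\{x,y\}$ of $G$ are poles of $\eta$.''

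Next I would check that the quasi--fundamental region is unchanged. By definition $\widehat{\R}_{\ZZ_n}=\R_{\ZZ_n}\setminus\{x,y\}$, where $\R_{\ZZ_n}$ is a fundamental region for the action of $G$ on $\CW$; this is exactly the region appearing in the statement of Theorem \ref{main3b}. Hence ``there are exactly $\ell$ poles and $\ell$ zeros in $\widehat{\R}_{\ZZ_n}$'' in Theorem \ref{main3b} coincides, with $\ell=r$, with the second bullet of the present statement, and the counting relation $k=n\ell+2$ becomes $k=nr+2$ (equivalently $k-2=nr$ zeros).

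With these identifications, both the equivalence and the maximality clause follow verbatim from Theorem \ref{main3b}. I do not expect any genuine obstacle here; the only point needing a line of justification is that the hosohedron $\mathfrak{H}_n$ attached to $G$ is essentially unique, so that ``there exists a $G$--invariant hosohedron $\A=\mathfrak{H}_n$ with $V(\A)\subset\Po_\eta$'' and ``the fixed points of $G$ are poles of $\eta$'' carry the same information — and this is precisely the content of the remark preceding the theorem together with Proposition \ref{encajeconforme}.
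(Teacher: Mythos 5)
Your proposal is correct and matches the paper's own treatment: the paper offers no separate proof of this ``revisited'' version, presenting it (after the remark preceding it) exactly as a restatement of Theorem \ref{main3b} obtained by identifying the vertices of a $G$--invariant hosohedron $\mathfrak{H}_{n}$ with the common fixed points $\{x,y\}$ of $G$ and keeping the same quasi--fundamental region $\widehat{\R}_{\ZZ_n}$. Your dictionary (including the observation that for $n\geq 3$ a generator must fix both vertices, so $V(\A)=\{x,y\}$) is precisely the content the paper leaves implicit, so nothing further is needed.
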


Notice that Theorem \ref{Z2}.C provides the smallest example (in terms of the least number of poles) when 
$G\cong\ZZ_{2}$.

\begin{example}[The simplest cyclic: case C of Theorem \ref{Z2}] 
Let $p_1, p_2, x\in\CW$ be three points and  $T\in\PSL$ an elliptic transformation 
such that $T(p_1)=p_2$, $T(p_2)= p_1$ and $T(x)= x$. Let $y$ be the other fixed point of $T$.

\begin{enumerate}
\item Then 
$$\eta=\lambda\frac{(z-y)}{(z-p_1)(z-p_2)(z-x)} \ dz,
\quad\text{for }\lambda\in\CC^{*},
$$
is the simplest 1-form with isotropy group $\ZZ_2$ with exactly 3 poles.

\item There is a quasi--fundamental region $\widehat{\R}_{\ZZ_{2}}$, of the group $G$ generated by $T$, 
containing $\{p_{1},p_{2}\}$ but not containing $\{x,y\}$.  Add $\ell-1$ poles $\{p'_{i}\}$ and $\ell-1$ zeros 
$\{q'_{i}\}$ to $\widehat{\mathcal{R}}_{\ZZ_{2}}$. 
Then the 1--form  
$$\eta= \lambda\frac{(z-y)}{(z-p_1)(z-p_2)(z-x)}\,
\frac{\prod\limits_{i=1}^{2\ell-2}(z-q'_i)}{\prod\limits_{i=1}^{2\ell-2}(z-p'_i)}\ dz,
\quad\text{for }\lambda\in\CC^{*},
$$  
has isotropy group $\ZZ_2$ and has exactly $2\ell$ zeros and $2\ell+1$ poles. 
\end{enumerate}

\end{example}

\subsection{Global--geometric characterization of rational 1--forms with finite isotropy}\label{AnalClass}

Summarizing Theorems \ref{main2}, \ref{teoremaDihedrico}, \ref{D22}, \ref{Z2} and \ref{main3b}, 
we immediately obtain the following general classification result for non--trivial finite isotropy:

\begin{corollary}\label{genclass}
Let $G<\PSL$ be a non--trivial finite subgroup of $\PSL$ 
and $k\geq3$.
Then 
$$
\eta=\lambda \ \frac{\ \prod\limits_{j=1}^{k-2}(z-q_{j})\ }{\prod\limits_{\iota=1}^{k}(z-p_{\iota})}\ dz, 
\quad \lambda\in\CC^{*}, 
\quad q_{j}\in\Z_\eta, 
\quad p_{\iota}\in\Po_\eta
$$
is a 1--form on $\CW$ with exactly $k-2$ simple zeros, $k$ simple poles and with isotropy group $G$
if and only if
\begin{enumerate}
\item[1)] we can place $k-\ell_{2}\abs{G}$ poles and $k-2-\ell_{1}\abs{G}$ zeros on the vertices $V(\A)$, centers 
of edges $E(\A)$ and centers of faces $F(\A)$, of the corresponding M\"obius polyhedra $\A$, as in 
Table \ref{tablaK},
\item[2)] we can place exactly $\ell_{1}$ zeros and $\ell_{2}$ poles in a quasi--fundamental region 
$\widehat{\R}_{G}$ ($\widehat{\R}_{\ZZ_{n}}$ in the case of the cyclic groups), 
where the number of poles $k=k(\ell_{1},\ell_{2})$ is given by a simple formula that depends on the difference 
$dif=\ell_{1}-\ell_{2}$, as in Table \ref{tablaK}.
\end{enumerate}
\end{corollary}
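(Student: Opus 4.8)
The plan is to read off the statement from the five theorems it summarizes, namely Theorems \ref{main2}, \ref{teoremaDihedrico}, \ref{D22}, \ref{Z2} and \ref{main3b}, after sorting the non--trivial finite subgroups of \PSL\ by isomorphism type. First I would recall Klein's classification: every non--trivial finite $G<\PSL$ is conjugate in \PSL\ to exactly one of $A_{4}$, $S_{4}$, $A_{5}$, $\DD_{n}$ with $n\geq 2$, or $\ZZ_{n}$ with $n\geq 2$. By Lemma \ref{pushForma} the isotropy type of a rational 1--form is invariant under conjugation by elements of \PSL\ and under multiplication by $\lambda\in\CC^{*}$, so it is enough to treat one representative of each conjugacy class together with one M\"obius polyhedron $\A$ having isotropy $G$ (Proposition \ref{embedding}); in the cyclic case this is the hosohedron $\mathfrak{H}_{n}$, for which $V(\A)=\{x,y\}$ are the two fixed points of $G$ and the relevant quasi--fundamental region is $\widehat{\R}_{\ZZ_{n}}$, not $\widehat{\R}_{\A}$.

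Next I would run through the cases and match each to a row of Table \ref{tablaK}, using Table \ref{tablaBuena} for the values of $v,e,f$. For $G$ platonic, Theorem \ref{main2} gives two possibilities: $E(\A)\subset\Z_{\eta}$ with $\ell$ poles and $\ell$ zeros in $\widehat{\R}_{\A}$ (so $dif=\ell_{1}-\ell_{2}=0$ and $k=\ell\abs{G}+v+f$), or $E(\A)\subset\Po_{\eta}$ with $\ell$ poles and $\ell+1$ zeros ($dif=1$, $k=\ell\abs{G}+v+f+e$); in both, the poles sitting on $V(\A)\cup E(\A)\cup F(\A)$ number $k-\ell_{2}\abs{G}$ and the zeros there number $k-2-\ell_{1}\abs{G}$, which is exactly condition (1). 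For $G\cong\DD_{n}$, $n\geq 3$, Theorem \ref{teoremaDihedrico} adds case (B) with $V(\A)\cup E(\A)\subset\Z_{\eta}$, $F(\A)\subset\Po_{\eta}$, and $\ell$ poles, $\ell-1$ zeros in $\widehat{\R}_{\A}$ ($dif=-1$, $k=\ell\abs{G}+f$); for $\DD_{2}$, Theorem \ref{D22} adds case (C) with $V(\A)\cup E(\A)\cup F(\A)\subset\Z_{\eta}$ and $dif=-2$, $k=\ell\abs{G}=4\ell$. For $G\cong\ZZ_{n}$, Theorems \ref{main3b} and \ref{Z2} give, with $\A=\mathfrak{H}_{n}$ and region $\widehat{\R}_{\ZZ_{n}}$: the two fixed points both poles ($dif=0$, $k=n\ell+2$), and, when $n=2$, the two extra options of both being zeros ($dif=-2$, $k=2\ell$) or one of each ($dif=-1$, $k=2\ell+1$). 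Collecting these, the number $k$ depends only on $dif$ by the simple rule recorded in Table \ref{tablaK}, and a one--line Gauss--Bonnet check (the excess of poles over zeros equals $2$) confirms the consistency of each row.

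The converse direction is then immediate: given $\eta$ of the displayed product form with poles and zeros distributed as in (1)--(2), the distribution matches the hypotheses of whichever of the cited Theorems \ref{main2}, \ref{teoremaDihedrico}, \ref{D22}, \ref{Z2}, \ref{main3b} applies to $G$, hence (ultimately via the conditions (1)--(3) of Theorem \ref{caracterizacion1}) the form $\eta$ is $G$--invariant; and the clause ``$G$ maximal among subgroups of \PSL\ satisfying (1)--(2) $\iff$ $\eta$ has isotropy $G$'' is precisely the maximality clause carried by each of those theorems. The main obstacle, and essentially the only work, is the bookkeeping needed to verify that a single formula $k=k(dif)$ together with Table \ref{tablaK} faithfully reproduces every sub--case of all five theorems at once; the delicate point is reconciling the cyclic case, where $V(\A)$ means the two fixed points and $\widehat{\R}_{\ZZ_{n}}$ differs from the polyhedral quasi--fundamental region $\widehat{\R}_{\A}$ (for $n\geq 3$ it is ``half'' of it), with the platonic and dihedral cases, and in tracking the admissible ranges of $\ell$ (e.g. $\ell\geq 1$ or $\ell\geq 2$ in those cases where $V(\A)$ or $V(\A)\cup E(\A)$ carries zeros) so that $k\geq 3$ holds throughout.
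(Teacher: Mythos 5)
Your proposal is correct and follows essentially the same route as the paper: the paper offers no separate argument for Corollary \ref{genclass} beyond observing that it summarizes Theorems \ref{main2}, \ref{teoremaDihedrico}, \ref{D22}, \ref{Z2} and \ref{main3b} via Table \ref{tablaK}, which is exactly the case-by-case bookkeeping (including the cyclic-case distinction between $\widehat{\R}_{\ZZ_{n}}$ and $\widehat{\R}_{\A}$, and the maximality clause) that you carry out explicitly.
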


\begin{table}[htp]
\caption{Formula for the number of poles $k$, and what to place on $V(\A)$, $E(\A)$ and $F(\A)$, in terms of 
the difference $dif=\ell_{1}-\ell_{2}$.}
\begin{center}
\begin{tabular}{|c|c|c|c||c|c|}
 \hline
 $G$ & $A_{4}$, $S_{4}$, $A_{5}$ & $\DD_{n}$ & $\DD_{2}$ & 
 $\ZZ_{n}$ 
 & 
 $\ZZ_{2}$ 
 \\ 
 \hline
 & & & & & \\[-8pt]
 $\abs{G}$ & $12$, $24$, $60$ & $2 n$ & $4$ & $n\geq 3$ & $2$ \\[3pt]
 \hline
 $\A$ & Platonic & Dihedra & Dihedra & Hosohedra & Hosohedra \\
 \hline
 \multicolumn{6}{}{} \\
 \hline
  & \multicolumn{5}{|c|}{} \\
 $dif$ & \multicolumn{5}{|c|}{Formula for $k$} \\
 & \multicolumn{5}{|c|}{} \\
 \hline
 & & & & & \\[-8pt]
 $-2$ & & & $\ell_{2}\abs{G}$ & & $\ell_{2}\abs{G}$ \\[3pt]
 \hline
 & & \multicolumn{2}{|c||}{} & \multicolumn{2}{|c|}{} \\[-8pt]
 $-1$ & & \multicolumn{2}{|c||}{$\ell_{2}\abs{G}+f$} & \multicolumn{2}{|c|}{$\ell_{2}\abs{G}+1$} \\[3pt]
 \hline
 & \multicolumn{3}{|c||}{} & \multicolumn{2}{|c|}{} \\[-8pt]
 $0$ & \multicolumn{3}{|c||}{$\ell_{2}\abs{G}+v+f$} & \multicolumn{2}{|c|}{$\ell_{2}\abs{G}+2$} \\[3pt]
\hline
 & \multicolumn{3}{|c||}{} & & \\[-8pt]
$1$ & \multicolumn{3}{|c||}{$\ell_{2}\abs{G}+v+f+e$} & &  \\[3pt]
\hline
 \multicolumn{6}{}{} \\
\hline
 & \multicolumn{5}{|c|}{} \\
$dif$ & \multicolumn{5}{|c|}{What to place on $V(\A)$, $E(\A)$ and $F(\A)$} \\
 & \multicolumn{5}{|c|}{} \\
\hline
$-2$ &  &  & $V(\A)\cup E(\A)$ &  & $V(\A)\subset\Z_{\eta}$ \\ 
& & & $\cup F(\A)\subset\Z_{\eta}$ & & \\
\hline
$-1$ &  & \multicolumn{2}{|c||}{$V(\A)\cup E(\A)\subset\Z_{\eta}$} &  & $V(\A)=\{x,y\}$ \\
 &  & \multicolumn{2}{|c||}{$F(\A)\subset\Po_{\eta}$} &  & $x\in\Po_{\eta}$, $y\in\Z_{\eta}$ \\
\hline
$0$ & \multicolumn{3}{|c||}{$V(\A)\cup F(\A)\subset\Po_{\eta}$} & \multicolumn{2}{|c|}{$V(\A)\subset\Po_{\eta}$}  \\
& \multicolumn{3}{|c||}{$E(\A)\subset\Z_{\eta}$} & \multicolumn{2}{|c|}{} \\
\hline
$1$ & \multicolumn{3}{|c||}{$V(\A)\cup E(\A)\cup F(\A)\subset\Po_{\eta}$} &  &  \\
\hline
\end{tabular}
\end{center}
\label{tablaK}
\end{table}%

\subsection{Main result}
We can now state the main theorem.
\begin{theorem}[Classification of rational 1--form with simple poles and simple zeros according to their isotropy group]\label{mainresult}
Let $\eta$ be a rational 1--form on $\CW$ with simple poles and simple zeros. Let $k\geq 2$ denote the number of poles of $\eta$.
\begin{enumerate}
\item When $k=2$, $\eta$ is conjugate to $\widehat{\eta}=\frac{\lambda}{z}\, dz$ for $\lambda\in\CC^{*}$, it's isotropy group is $\CC^{*}=\{z\mapsto az\ \vert\ a\in\CC^{*}\}$.

\item When $k\geq 3$, 
$$
\eta=\lambda \ \frac{\ \prod\limits_{j=1}^{k-2}(z-q_{j})\ }{\prod\limits_{\iota=1}^{k}(z-p_{\iota})}\ dz, 
\quad \lambda\in\CC^{*}, 
\quad q_{j}\in\Z_\eta, 
\quad p_{\iota}\in\Po_\eta
$$
and it has finite isotropy group $G$ as in Corollary \ref{genclass}, or $G=Id$.
\end{enumerate}

\end{theorem}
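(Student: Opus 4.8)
The plan is to derive Theorem \ref{mainresult} as a direct corollary of the results already established, so the proof is essentially a matter of assembling the pieces in the right order rather than proving anything new. First I would dispose of the case $k=2$ by citing Remark \ref{separacionviak}.1 verbatim: any rational 1--form with exactly two poles is $\PSL$--conjugate to $\frac{\lambda}{z}\,dz$, whose isotropy group is the $\CC^{*}$ action $z\mapsto az$. Since conjugate 1--forms have conjugate isotropy groups (Lemma \ref{pushForma}), this settles part (1) completely.

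For $k\geq 3$, I would first invoke Remark \ref{separacionviak}.2 to note that the isotropy group $G$ is necessarily finite. If $G$ is trivial there is nothing more to say, so assume $G\neq Id$. The normal form $\eta=\lambda\,\prod_{j=1}^{k-2}(z-q_{j})\big/\prod_{\iota=1}^{k}(z-p_{\iota})\,dz$ is simply the general shape of a rational 1--form on $\CW$ with $k$ simple poles and, by Gauss--Bonnet, $k-2$ simple zeros (with the convention absorbing a possible pole or zero at $\infty$ into the count); this requires no argument beyond the degree count already used throughout \S3. Then, by Klein's classification recalled in \S2, the finite nontrivial subgroups of $\PSL$ fall into the classes $A_{4}$, $S_{4}$, $A_{5}$, $\DD_{n}$ ($n\geq 2$), and $\ZZ_{n}$ ($n\geq 2$). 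For each of these classes, the explicit description of which 1--forms have that isotropy group — in terms of placing poles and zeros on the vertices, edge--centers and face--centers of the associated M\"obius polyhedron together with a quasi--fundamental region — is exactly the content of Theorems \ref{main2}, \ref{teoremaDihedrico}, \ref{D22}, \ref{Z2}, \ref{main3b}, as collated in Corollary \ref{genclass}. Thus part (2) follows by case analysis on the isomorphism type of $G$, with each case handed off to the corresponding already--proven theorem.

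I do not expect a genuine obstacle here, since Theorem \ref{mainresult} is deliberately stated as the culmination of the preceding development; the only thing to be careful about is bookkeeping. Specifically, one must check that the five classification theorems together with the trivial--isotropy possibility genuinely exhaust all cases — i.e. that Klein's list is complete and that $\DD_{2}$ (treated separately in Theorem \ref{D22}) and $\ZZ_{2}$ (Theorem \ref{Z2}) are not accidentally omitted or double--counted — and that the stated normal form is consistent across all cases, including the placement conventions at $0$ and $\infty$. The mild subtlety is that the ``realizability'' half (every finite subgroup of $\PSL$ actually occurs as an isotropy group) is needed to make the statement non--vacuous; this is supplied by Proposition \ref{embedding} together with the constructions in Lemma \ref{classFinitos} for the platonic and dihedral cases, and by the hosohedron construction in \S\ref{ciclico} for the cyclic case. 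Assembling these, the theorem follows.

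\begin{proof}
For part (1), when $k=2$ the claim is precisely Remark \ref{separacionviak}.1: $\eta$ is $\PSL$--conjugate to $\widehat{\eta}=\frac{\lambda}{z}\,dz$, and the isotropy group of $\widehat{\eta}$ is $\CC^{*}=\{z\mapsto az\mid a\in\CC^{*}\}$; by Lemma \ref{pushForma} the isotropy group of $\eta$ is a conjugate of this, hence again isomorphic to $\CC^{*}$.

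For part (2), suppose $k\geq 3$. By Remark \ref{separacionviak}.2 the isotropy group $G$ of $\eta$ is finite. Writing the poles as $p_{1},\dots,p_{k}$ and the zeros as $q_{1},\dots,q_{k-2}$ (by Gauss--Bonnet there are exactly $k-2$ of them, with a pole or zero at $\infty$ absorbed into these lists in the usual way), $\eta$ has the asserted form
$$
\eta=\lambda\ \frac{\ \prod_{j=1}^{k-2}(z-q_{j})\ }{\prod_{\iota=1}^{k}(z-p_{\iota})}\ dz,\qquad \lambda\in\CC^{*}.
$$
If $G=Id$ we are done. Otherwise, by the classification of finite subgroups of $\PSL$ recalled in \S2, $G$ is (conjugate to) one of $A_{4}$, $S_{4}$, $A_{5}$, $\DD_{n}$ with $n\geq2$, or $\ZZ_{n}$ with $n\geq2$. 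In each case the precise description of $\eta$ in terms of its poles and zeros is given, respectively, by Theorem \ref{main2} (for $A_{4},S_{4},A_{5}$), Theorem \ref{teoremaDihedrico} and Theorem \ref{D22} (for $\DD_{n}$, $n\geq3$ and $n=2$), and Theorem \ref{Z2} and Theorem \ref{main3b} (for $\ZZ_{2}$ and $\ZZ_{n}$, $n\geq3$); these are summarized uniformly in Corollary \ref{genclass} and Table \ref{tablaK}. Conversely, Proposition \ref{embedding} together with Lemma \ref{classFinitos}, and the hosohedral construction of \S\ref{ciclico}, show that each such $G$ is actually realized. This proves the statement.
\end{proof}
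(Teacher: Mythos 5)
Your proposal is correct and follows essentially the same route as the paper, whose proof simply cites Remark \ref{separacionviak} for the $k=2$ case and the finiteness statement, and then defers the case analysis to the classification theorems summarized in Corollary \ref{genclass}. Your additional remarks on exhaustiveness of Klein's list and realizability are sound but only make explicit what the paper leaves implicit.
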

\begin{proof}
Follows directly from Remark \ref{separacionviak} and \S\ref{AnalClass}.
\end{proof}

\section{Other related results}\label{sec4}

\subsection{Bundle structure for 1--forms with finite isotropy}\label{sec:variedad}

Recall that we are studying 1--forms on $\CW$ that only have simple zeros and poles, 
from Corollary \ref{genclass}, it is natural to consider the following 
\begin{multline}
\mathcal{M}(G,\ell_{1},\ell_{2})=\Big\{ \eta \ |\ \text{Isotropy}(\eta)=G,  \\
\#(\Z_{\eta}\cap\widehat{\R}_{G})= \ell_{1}, \qquad
\#(\Po_{\eta}\cap\widehat{\R}_{G})= \ell_{2} \Big\}.
\end{multline}
This is the set of 1--forms with isotropy group $G$ and exactly $\ell_{1}$ zeros and $\ell_{2}$ poles in a quasi--
fundamental region $\widehat{\R}_{G}$.

In \cite{Mucino2} the authors prove that the space of all 1-forms  up to degree $-s$, denoted by 
$\Omega^1(-s)$, is biholomorphic to a nontrivial line bundle over $\CP^{s}\times\CP^{s-2}$. 

In a similar vein, we begin by proving the following

\begin{theorem}\label{variedad} Let $G<\PSL$ be a 
finite
subgroup. Then 
\begin{enumerate}

\item[1)] $\M(G,\ell_{1},\ell_{2})$ is a holomorphic $\big(\frac{\PSL}{G}\times\CC^*\big)$--bundle over 
$$\left(\frac{(\widehat{\R}_{G})^{\ell_1}\times(\widehat{\R}_{G})^{\ell_2}-\Delta}{S_{\ell_1}\times S_{\ell_2}}\right)$$ 
where $S_{\ell_i}$ is the symmetric group of $\ell_i$ elements and 
$\Delta\subset (\widehat{\R}_{G})^{\ell_1}\times (\widehat{\R}_{G})^{\ell_2} $ is the set of diagonals.
\item[2)] $\M(G,\ell_{1},\ell_{2})$ is a complex analytic sub--manifold of $\Omega^1(-k)$, 
of dimension $dim\big(\M(G,\ell_{1},\ell_{2})\big)=\ell_{1}+\ell_{2}+4$, 
where $k=k(\ell_{1},\ell_{2})$ is as in Table \ref{tablaK}.
\item[3)] $\M(G,\ell_{1},\ell_{2})$ is arc--connected; that is, if $\eta_1$, $\eta_2\in \M(G,\ell_{1},\ell_{2})$, then 
there exists a differential function $F:[0,1]\to \M(G,\ell_{1},\ell_{2})$ such that $F(0)=\eta_1$ and 
$F(1)=\eta_2$.
\end{enumerate}
\end{theorem}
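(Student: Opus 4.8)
The plan is to build the bundle structure directly from the description of $\M(G,\ell_1,\ell_2)$ provided by Corollary \ref{genclass}, and then read off the manifold structure, dimension, and connectivity as consequences.

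\emph{Step 1: construct the map to the base.} For $\eta\in\M(G,\ell_1,\ell_2)$, condition (1) of Corollary \ref{genclass} fixes which points sit on $V(\A)\cup E(\A)\cup F(\A)$ once the M\"obius polyhedron $\A$ (equivalently, the embedded copy of $G$ in $\PSL$) is chosen; condition (2) records the $\ell_1$ zeros and $\ell_2$ poles lying in the quasi--fundamental region $\widehat{\R}_G$. I would first argue that a choice of $\eta\in\M(G,\ell_1,\ell_2)$ determines (a) a point of $\PSL/G$ — namely the coset $T G$ where $T$ carries the standard $G_k$ and its M\"obius polyhedron $\A_k$ (Lemma \ref{classFinitos}) to the one adapted to $\eta$; by Proposition \ref{encajeconforme} any two such $T$ differ by right multiplication by an element of $G$, so the coset is well defined — and (b) an unordered configuration of $\ell_1+\ell_2$ distinct points (the zeros and poles in $\widehat{\R}_G$, pulled back by $T$ to the standard $\widehat{\R}_{G}$), no two coinciding, giving a point of the quotient $\big((\widehat{\R}_G)^{\ell_1}\times(\widehat{\R}_G)^{\ell_2}-\Delta\big)/(S_{\ell_1}\times S_{\ell_2})$. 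This is the projection $\pi$ onto the base. The remaining datum is the leading coefficient $\lambda\in\CC^*$. So the fiber is naturally $(\PSL/G)\times\CC^*$, and I would check local triviality by choosing, over a contractible neighborhood in configuration space, a continuous section of $\PSL\to\PSL/G$ (which exists since $G$ is finite, so the projection is a covering map and in particular a locally trivial fiber bundle with discrete fiber) together with the explicit rational formula of Corollary \ref{genclass} to write down a biholomorphism from $\pi^{-1}(U)$ to $U\times(\PSL/G)\times\CC^*$.

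\emph{Step 2: manifold structure and dimension.} Part (2) follows from part (1): configuration space of $\ell_1+\ell_2$ ordered distinct points in the open set $\widehat{\R}_G\subset\CW$ is a complex manifold of dimension $\ell_1+\ell_2$, the quotient by the free action of $S_{\ell_1}\times S_{\ell_2}$ is again a complex manifold of the same dimension, $\PSL/G$ has complex dimension $3$, and $\CC^*$ has dimension $1$, so the total space has dimension $\ell_1+\ell_2+3+1=\ell_1+\ell_2+4$. To see that $\M(G,\ell_1,\ell_2)$ is a complex analytic \emph{sub}manifold of $\Omega^1(-k)$, I would use that $\Omega^1(-k)$ has the zeros--poles (plus principal coefficient) holomorphic coordinates of \cite{Julio}/\cite{Mucino2}, and that the map sending a point of our total space to the corresponding $\eta$ via the formula in Corollary \ref{genclass} is a holomorphic injective immersion with closed image — injectivity and the immersion property being checked in those coordinates, where the map is, coordinatewise, the $G$--orbit map applied to the free points together with the fixed assignment on $V\cup E\cup F$.

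\emph{Step 3: arc--connectedness.} Part (3) reduces to arc--connectedness of the base and of the fiber, since a locally trivial bundle over an arc--connected base with arc--connected fiber is arc--connected. The fiber $(\PSL/G)\times\CC^*$ is arc--connected because $\PSL$ is connected (hence so is $\PSL/G$) and $\CC^*$ is. For the base I would show $\widehat{\R}_G$ is arc--connected — it is the quasi--fundamental region, a simply connected set minus finitely many points, hence still path connected in complex dimension one — and then that ordered configuration space of $m$ distinct points in a connected open subset of $\CC$ (or $\CW$) is path connected, a standard fact, which descends to the quotient; combining with an explicit path in $\PSL/G$ and in $\CC^*$ gives the desired $F$, which can be taken piecewise smooth, in particular differentiable after reparametrization.

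\emph{Main obstacle.} The delicate point is not the topology but making precise the claim that $\eta$ \emph{determines} the coset $TG\in\PSL/G$ and the configuration in $\widehat{\R}_G$ — i.e. that $\pi$ is well defined and, conversely, that the total space maps \emph{bijectively} and biholomorphically onto $\M(G,\ell_1,\ell_2)$. One must rule out that two genuinely different cosets, or different free configurations, yield the same 1--form; here maximality of $G$ (the ``isotropy is exactly $G$'' clause) and Proposition \ref{encajeconforme} are what force uniqueness of the adapted M\"obius polyhedron up to the $G$--action, and one must also verify that a free point in $\widehat{\R}_G$ never accidentally lands on $V(\A)\cup E(\A)\cup F(\A)$ after applying a group element, which is exactly Lemma \ref{ordenRegionQuasi}. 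Getting these identifications airtight, and checking the immersion/closed--image claim in the zeros--poles coordinates of $\Omega^1(-k)$, is where the real work lies; the bundle triviality and the connectedness arguments are then routine.
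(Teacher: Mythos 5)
Your proposal is correct and follows essentially the same route as the paper: the bundle structure is read off from Corollary \ref{genclass} with fiber $\big(\PSL/G\big)\times\CC^{*}$ over the configuration space of $\ell_1$ zeros and $\ell_2$ poles in $\widehat{\R}_{G}$, part (2) is obtained via the principal--coefficient plus zeros--poles coordinates on $\Omega^{1}(-k)$, and part (3) from arc--connectedness of base and fiber together with the local product structure. If anything, you are more explicit than the paper about the well--definedness of the projection (via Proposition \ref{encajeconforme} and Lemma \ref{ordenRegionQuasi}) and about local triviality via a local section of $\PSL\to\PSL/G$, points the paper's proof treats only implicitly.
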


\begin{proof} 
For (1) consider Corollary \ref{genclass}.
Since $S_{\ell_1}\times S_{\ell_2}$ acts on $(\widehat{\R}_{G})^{\ell_1}\times(\widehat{\R}_{G})^{\ell_2}-\Delta$ 
by stripping the order of the placement of the $\ell_{1}$ zeros and $\ell_{2}$ poles on the quasi--fundamental 
region $\R_{G}$, the action of $S_{\ell_1}\times S_{\ell_2}$ is holomorphic and free; 
thus
$$E=\left(\frac{(\widehat{\R}_{G})^{\ell_1}\times(\widehat{\R}_{G})^{\ell_2}-\Delta}
{S_{\ell_1}\times S_{\ell_2}}\right)$$ 
is a holomorphic manifold of (complex) dimension $\ell_{1}+\ell_{2}$.
Let $ \{V_ \alpha \}_ {\alpha \in A} $ be an atlas for $E$, that is 
a collection of open sets in $E$, such that $V_ \alpha $ is biholomorphic to a subset of 
$ \CC ^ {\ell_1 + \ell_2} $ and $ \cup _ {\alpha \in A} V_ \alpha =  E$. 
The push--forward of $\eta$ by $\PSL$ provides a 1--form in 
$\M(G,\ell_1,\ell_2)$ with isotropy $G$ up to homothecy provided by the main coefficient  
$\lambda \in \CC^*$. Thus $\{V_ \alpha \times \frac{\PSL}{G}\times\CC^*  \} _ {\alpha \in A}$ is a holomorphic 
atlas for $\M(G,\ell_1,\ell_2)$.

For (2), first note that the fact that $\M(G,\ell_{1},\ell_{2})$ is a complex analytic manifold follows directly 
from (1). 
To show that $\M(G,\ell_{1},\ell_{2})$ is a sub--manifold of 
$\Omega^1(-k)$, note that $Id:\M(G,\ell_{1},\ell_{2})\hookrightarrow\Omega^1_{S}(-k)$ is a 
submersion
into $\Omega^1_{S}(-k)\subset\Omega^1(-k)$, where $\Omega^1_{S}(-k)$ are the 1--forms of degree $-k$ 
with simple poles and zeros (which, by the way, is dense in $\Omega^1(-k)$).
That $Id:\M(G,\ell_{1},\ell_{2})\hookrightarrow\Omega^1_{S}(-k)$ is a submersion follows directly 
by using the coordinate system comprised of the principal coefficient $\lambda$, the poles 
$\Po_{\eta}$ and zeros $\Z_{\eta}$.
To relate to the coordinate system provided by the principal coefficient $\lambda$ and the coefficients of $\eta$ 
considered as a quotient of monic polynomials, use the Vi\`ete map 
$V:\Omega^1(-k)\rightarrow\Omega^1(-k)$, see \cite{Katz}, and note that $V$ is bi--rational/non--singular on 
$\Omega_{S}^1(-k)$.

To prove (3), first note that since $(\widehat{\R}_{G})^{\ell_1}\times(\widehat{\R}_{G})^{\ell_2}-\Delta$ is 
arc--connected, the base space $E$ is arc--connected. 
Moreover, each fiber is clearly arc--connected since $\PSL\times\CC^{*}$ is arc--connected. 
Because of the local cartesian product structure of $\M(G,\ell_{1},\ell_{2})$ the result follows.
\end{proof}

\begin{remark}
Of course, as shown in Theorem \ref{variedad}.2 $\M(G,\ell_{1},\ell_{2})\subset\Omega^1(-k)$.
However, by considering the fibers, it is clear that $\M(G,\ell_{1},\ell_{2})$ is not a sub--bundle of 
$\Omega^1(-k)$.
\end{remark}

\subsection{Sufficient geometric conditions for isochronicity}
\label{secIsocron}
Recall that an iso\-chro\-nous 1--form can be characterized by requiring that all its residues be purely 
imaginary.
In regards to which of the invariant 1--forms are isochronous, we have this nice geometric result 
(recall that a circle passing through $\infty\in\CW$ is a line in $\CC$).

\begin{theorem} [Sufficient geometric conditions for isochronous 1--forms with simple poles and zeros having 
finite non--trivial isotropy]
\label{sufficientIsochronous}
Let $\eta$ be a 1--form with finite non--trivial isotropy group $G$. Let $E\subset\CW$ be a circle
such that the reflection $\rho_{E}$ along $E$ satisfies that 
for all $p\in\Po_{\eta}$ and all $q\in\Z_{\eta}$
\begin{enumerate}
\item[1)] $\rho_{E}(p)\in\mathcal{O}(p)\subset\Po_{\eta}$,
\item[2)] $\rho_{E}(q)\in\mathcal{O}(q)\subset\Z_{\eta}$.
\end{enumerate}
Then there exists $\theta\in\RR$ such that $\e^{i\theta} \eta$ is isochronous.
\end{theorem}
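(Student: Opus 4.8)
The plan is to reduce the problem to a statement about residues lying on a common ray in $\CC^{*}$. Recall that $\eta$ is isochronous precisely when every residue $\text{Res}(\eta,p_{\iota})$ is purely imaginary, and more generally $\e^{i\theta}\eta$ is isochronous precisely when all the residues of $\eta$ lie on the line $\RR \cdot \e^{-i\theta}$ through the origin. So it suffices to show that the hypotheses on the reflection $\rho_{E}$ force all residues of $\eta$ to be real multiples of a single complex number. First I would reduce to a normalized situation: after conjugating by a suitable $T\in\PSL$ (using Lemma \ref{pushForma}, which only changes the residues by a nonzero common factor coming from the derivative data, hence does not affect collinearity), I may assume $E=\RR\cup\{\infty\}=\partial\HH$, so that $\rho_{E}(z)=\bar{z}$ is ordinary complex conjugation.

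The key computation is then to compare $\eta$ with its conjugate $\overline{\eta}$, by which I mean the $1$--form obtained by conjugating all coefficients. Since the poles $\Po_{\eta}$ and zeros $\Z_{\eta}$ are each $\rho_{E}$--invariant as \emph{sets} (by hypotheses (1) and (2)), the $1$--form $\overline{\eta}$ has exactly the same poles and the same zeros as $\eta$, with the same multiplicities; using the product expression from Corollary \ref{genclass} this gives $\overline{\eta} = \mu\, \eta$ for some $\mu\in\CC^{*}$ that depends only on the principal coefficient $\lambda$, namely $\mu = \bar{\lambda}/\lambda$, which has modulus $1$. Write $\mu = \e^{-2i\theta}$. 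Now for each pole $p$ with $p=\bar{p}$ (a pole on $E$), comparing residues gives $\overline{\text{Res}(\eta,p)} = \mu\,\text{Res}(\eta,p)$, so $\e^{i\theta}\text{Res}(\eta,p)$ is real; and for a pole $p$ with $\bar{p}\ne p$, hypothesis (1) says $\bar{p}=\rho_{E}(p)\in\mathcal{O}(p)$, i.e.\ $\bar{p}=g(p)$ for some $g\in G$, and $G$--invariance of $\eta$ together with the transformation rule for residues under $g\in\PSL$ (an elliptic element, by Lemma \ref{afirmacion0}) relates $\text{Res}(\eta,\bar p)$ to $\text{Res}(\eta,p)$ by a factor I can track; combined with the conjugation relation $\text{Res}(\overline{\eta},\bar p) = \overline{\text{Res}(\eta,p)}$ this again pins $\e^{i\theta}\text{Res}(\eta,p)$ to a real line. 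Doing this uniformly over all poles shows every residue of $\eta$ lies on $\RR\cdot\e^{-i\theta}$, so $\e^{i\theta}\eta$ has all residues purely imaginary, hence is isochronous.

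The main obstacle I anticipate is the bookkeeping in the second case, when a pole is moved to its mirror image by a nontrivial $g\in G$ rather than being fixed by $\rho_{E}$: one must be careful that the ``residue twist'' introduced by pushing forward under the Möbius map $g$ is consistent with the antiholomorphic twist introduced by $\rho_{E}$, so that the two contributions land on the \emph{same} ray rather than on two different rays. Concretely, since $g$ is elliptic and $g_{*}\eta=\eta$, the chain rule gives $\text{Res}(\eta,g(p)) = \text{Res}(\eta,p)$ when $g$ is computed in the right local coordinate, but the naive residue in the coordinate $z$ acquires a factor $g'(p)^{?}$ which one must show has the correct phase relative to the derivative of $\rho_{E}$ along $E$. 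I would handle this by working in the distinguished parameter $\Psi(z)=\int^{z}\eta$: both $G$--invariance and the reflection symmetry translate into genuine isometries (Euclidean translations and reflections) of the image of $\Psi$, and the residues are exactly the translation periods of $\Psi$ around the poles; a reflection of the developing image that preserves the configuration of periods forces those periods to be parallel to a fixed line, which is precisely the isochronicity condition. This reformulation via $\Psi$ is, I expect, the cleanest route and avoids the error-prone direct manipulation of $g'(p)$.
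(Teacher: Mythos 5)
Your proposal is correct and is essentially the paper's own argument: normalize by $T\in\PSL$ so that $\rho_{E}$ becomes complex conjugation, note that hypotheses (1)--(2) force the monic numerator and denominator to have real coefficients (your identity $\overline{\eta}=(\bar{\lambda}/\lambda)\,\eta$ is the same fact), and use that $\bar{p}$ lies in the $G$--orbit of $p$ so the residues at $p$ and $\bar{p}$ coincide, which pins every residue to the line $\RR\cdot\lambda$ and yields the required $\theta$ (your $\e^{i\theta}$ versus $\e^{-i\theta}$ is only a sign convention absorbed by the existential statement). The obstacle you anticipate is vacuous: the residue of a \emph{1--form} at a simple pole is coordinate--free, so $\mathrm{Res}(\eta,g(p))=\mathrm{Res}(g_{*}\eta,g(p))=\mathrm{Res}(\eta,p)$ with no $g'(p)$ factor (and likewise $T_{*}$ leaves residues exactly invariant, not merely rescaled by a common factor), which is precisely what the paper invokes, so the detour through the distinguished parameter $\Psi$, while valid, is unnecessary.
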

\begin{proof}
Clearly there is a $T\in\PSL$ such that $T(E)\subset\RR$ and thus it follows that $(T\circ \rho_{E}\circ T^{-1})(z)=\zbar$ for $z\in\CC$.
\\
Let $\widehat{\eta}=T_{*}\eta$, thus 
$$\widehat{\eta}(z)=\lambda \frac{Q(z)}{P(z)}$$ 
with $\lambda\in\CC^{*}$, $Q(z),P(z)\in\CC[z]$ 
monic polynomials.
\\
Then, from conditions (1) and (2), for each pole $\widetilde{p}_{j}$ of $\widehat{\eta}$ and for each zero $\widetilde{q}_{\iota}$ of $\widehat{\eta}$ one has that $\bar{\widetilde{p}_{j}}= g_1 \widetilde{p}_{j}$ and $\bar{\widetilde{q}_{\iota}}=g_2 \widetilde{q}_{\iota}$ for some $g_1,g_2\in T\circ G\circ T^{-1}$; in other words $\bar{\widetilde{p}_{j}}$ and $\bar{\widetilde{q}_{\iota}}$ are also a pole and a cero, respectively, of $\widehat{\eta}$.  
Hence it follows that both $Q(z)$ and $P(z)$ have real coefficients. 
\\
Hence, for any $\widetilde{p}_{j}\in\Po_{\widehat{\eta}}$ 
$$\widehat{\eta}
= \lambda \frac{Q(z) dz}{\Big(z-\widetilde{p}_j\Big)\Big(z-\bar{\widetilde{p}_j}\Big) P_{j}(z)}
= \lambda \frac{Q(z) dz}{\Big(z^{2}-2\Re{\widetilde{p}_j} z+\abs{\widetilde{p}_j}^{2}\Big) P_{j}(z)},
$$ 
with $Q(z)$ and $P_{j}(z)$ being monic polynomials with real coefficients.
\\
And since $\bar{\widetilde{p}_j}$ and $\widetilde{p}_j$ are in the same orbit, their residues are the same, so
$$\lambda \frac{Q(\widetilde{p}_j)}{\Big(\widetilde{p}_j-\bar{\widetilde{p}_j}\Big) P_{j}(\widetilde{p}_j)}=
Res(\widehat{\eta},\widetilde{p}_j)=Res(\widehat{\eta},\bar{\widetilde{p}_j})=-\lambda \frac{Q\Big(\bar{\widetilde{p}_j}\Big)}{\Big(\widetilde{p}_j-\bar{\widetilde{p}_j}\Big) P_{j}\Big(\bar{\widetilde{p}_j}\Big)}.$$
Thus the residue $Res(\widehat{\eta},\widetilde{p}_j)$ is real multiple of $\lambda$ for each pole $\widetilde{p}_j$ of $\widehat{\eta}$. 
Since $\lambda=\abs{\lambda}\e^{i\arg{(\lambda)}}$, let $\theta=\arg{(z)}\pm\pi/2$ to obtain that
$\e^{i\theta}\widehat{\eta}$ is isochronous. 
Finally since $T$ leaves the residues invariant we 
conclude that $\e^{i\theta}\eta$ is isochronous.
\end{proof}
\begin{remark}
Note that the case when $\eta$ has only two poles requires that 

\centerline{
$Res(\eta,p_{1})=-Res(\eta,p_{2})$
}
\noindent
hence in order to extend Theorem \ref{sufficientIsochronous} to this case would require that $Res(\eta,p_{1})=Res(\eta,p_{2})=0$.
\end{remark}
\begin{example}
Because of the high symmetry of the M\"obius polyhedra $\A$ and since the examples of rational 1--forms 
$\eta$ constructed in \S\ref{ejemplos} only have poles or zeros on $V(\A) \cup E(\A) \cup F(\A)$, then it is easy 
to see that the conditions of Theorem \ref{sufficientIsochronous} are satisfied. This provides an alternate proof 
that the examples presented in \S\ref{ejemplos} are isochronous.
\end{example}

The next example shows that the conditions of Theorem \ref{sufficientIsochronous} are sufficient but 
not necessary. 
\begin{example}
Let 
\begin{multline}
\eta(z)= 
i
\left[
\frac{1}{z}
+\frac{1}{z-1}
+\frac{1}{z+1}
+\frac{1}{z-i}
+\frac{1}{z+i}
\right. \\
+\frac{1}{z-\left(\frac{1}{2}-i\right)}
+\frac{1}{z+\left(\frac{1}{2}-i\right)}
+\frac{1}{z-\left(1+\frac{i}{2}\right)}
+\frac{1}{z+\left(1+\frac{i}{2}\right)}
\\
\left.
-\frac{1}{z-\frac{1}{2}}
-\frac{1}{z+\frac{1}{2}}
-\frac{1}{z-\frac{i}{2}}
-\frac{1}{z+\frac{i}{2}}
\right] dz,
\\
=\frac{(z^{4}-a^{4}) (z^{4}-b^{4}) (z^{4}-c^{4})}
{z (z^{4}-(\frac{1}{2})^{4}) (z^{4}-1) (z^{4}-(1+\frac{i}{2})^{4})}dz.
\end{multline}
with $a, b, c \in\CC$ determined by the partial fraction expansion. 
\\
By inspection it is clear that there are 3 different residues and that they are a real multiple of each other. 
Moreover, 
note that $\eta$ has isotropy group $G\cong\ZZ_{4}$.
\\
The fixed points of $G$ are $\{0,\infty\}\subset\CW$ and they are poles of $\eta$ with residue $i$ and $-5i$ 
respectively.
\\
The orbits of $1/2$, $1$, and $1+ i/2$ are also poles with residues $-i$, $i$ and $i$ respectively.
\\
The orbits of $a$, $b$, and $c$ are zeros.
\\
Hence $\eta$ has a total of 14 poles and 12 zeros 
and is an isochronous rational 1--form.
See figure \ref{contraejemploisocrona} for the phase portrait of $\eta$.

We want to see whether there is a circle $E\subset\CW$ satisfying conditions (1) and (2) of 
Theorem \ref{sufficientIsochronous}. 
Since $0$ and $\infty$ are fixed points $\mathcal{O}(0)=\{0\}$ and $\mathcal{O}(\infty)=\{\infty\}$, so by 
condition (1), $E$ must pass through $0$ and $\infty$, {\it i.e.} $E$ is a straight line through the origin. 
Letting $\rho_{E}\in\PSL$ be the reflection through $E$, it is clear that because of the 4--fold symmetry of the 
poles and zeros, there is no straight line $E$ passing through the origin that satisfies conditions (1) and (2) of 
Theorem \ref{sufficientIsochronous}.

\begin{figure}[htbp]
\begin{center}
\hskip 10pt
\includegraphics[height=0.45\textwidth]{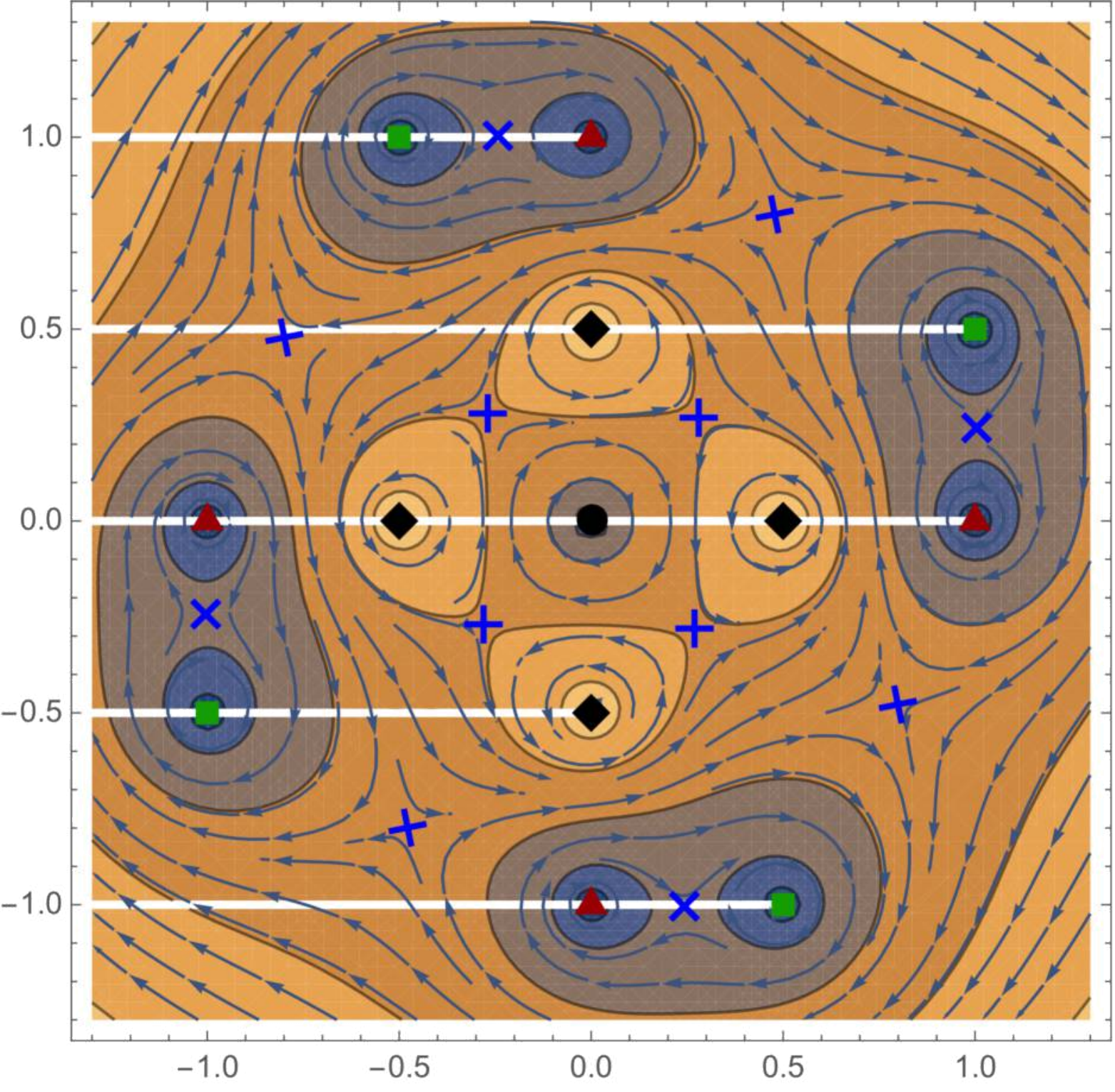}
\caption{
Example of an isochronous 1--form $\eta$. Note that there is no circle $E\subset\CW$ satisfying conditions (1) 
and (2) of Theorem \ref{sufficientIsochronous}. 
The zeros are saddles and are presented as (blue) crosses, the poles are centers and appear as (red) triangles, (green) squares, (black) diamonds and (black) dots according to their orbits (the other element of the  orbit of the origin is $\infty\in\CW$).
}
\label{contraejemploisocrona}
\end{center}
\end{figure}

\end{example}

\subsection{Langer's question}\label{secLanger}
In \cite{Langer} J.~C.~Langer studies quadratic differentials 

\centerline{
$\mathcal{F}=f(z)\,dz^{2}$, 
}
\noindent
for a rational function $f(z)$ on $\CW_{z}$, and presents a way to plot the polyhedral geometry of 
$\mathcal{F}$ using the phase portrait of the 1--form $\eta=\sqrt{f(z)}dz$. 

Since he is interested in \emph{`computational strategies for numerically plotting edges and other geodesics 
for such polyhedral geometries'}, 
J.~C.~Langer first considers the non--compact metric space $(\mathcal{F}_{fin},d)$, where $\mathcal{F}_{fin}$ 
is the finite points (consisting of regular points, zeros, and simple poles of $\mathcal{F}$); and $d(z_{1},z_{2})$ 
is the distance obtained using the metric $g=\abs{\mathcal{F}}$ associated to $\mathcal{F}$.
He then defines the polyhedral geometry of $\mathcal{F}$ as follows: the \emph{vertices}, 
$\mathcal{F}_{vert}$, are to be the finite critical points of $\mathcal{F}$; the \emph{edges}, 
$\mathcal{F}_{edge}$, are the union of the critical trajectories (which are the trajectories which tends to a finite 
limit point (necessarily a zero or simple pole) in one or both directions); and the edges in turn divide 
$\mathcal{F}_{face}=\mathcal{F}_{fin}\backslash\left( \mathcal{F}_{vert}\cup\mathcal{F}_{edge}\right)$ into $n$ 
connected components (the faces) $\mathcal{F}_k$ of a few standard types, including half planes, infinite 
strips, finite or semi-infinite cylinders.

\noindent
J.~C.~Langer procedes to show some examples of the above and asks the question:
\emph{``for which rational functions $f(z)$ does the corresponding polyhedral geometry of 
$\mathcal{F} = f(z)\,dz^2$ embed isometrically into $\RR^3$?''}

Related to this question, we can show a partial result. 
For this denote by $\mathcal{RI}\Omega^{1}\{\CW\}\subset\Omega^{1}\{\CW\}$ the isochronous rational 1--
forms on $\CW$. 
\begin{proposition}\label{geomPolyIsocrono}
The set of quadratic differentials 
$$
\mathcal{PG}=\Big\{\mathcal{F}=(f(z))^{2}\,dz^{2}\ \vert\ \mathcal{F} \text{ has polyhedral geometry} \Big\}
$$
is precisely 
$$
\mathcal{PG}=\Big\{ \mathcal{F}=\eta\otimes\eta \ \vert\ \eta\in\mathcal{RI}\Omega^{1}\{\CW\} \Big\}.
$$
\end{proposition}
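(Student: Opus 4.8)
The plan is to establish the two inclusions $\mathcal{PG}\subseteq\{\eta\otimes\eta\mid\eta\in\mathcal{RI}\Omega^{1}\{\CW\}\}$ and $\{\eta\otimes\eta\mid\eta\in\mathcal{RI}\Omega^{1}\{\CW\}\}\subseteq\mathcal{PG}$ separately. For the first inclusion, suppose $\mathcal{F}=(f(z))^{2}\,dz^{2}$ has a polyhedral geometry in Langer's sense. The key observation is that the faces of such a geometry are half-planes, infinite strips, and (semi-)infinite cylinders -- all of which are flat and, crucially, have the property that the natural parameter $\Psi(z)=\int^{z}\sqrt{\mathcal{F}}=\int^{z}f(z)\,dz$ maps them to standard flat regions in $\CC$. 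The square root $\sqrt{\mathcal{F}}=f(z)\,dz$ is globally a rational 1--form $\eta$ (since $f$ is rational, no branching occurs and $\eta=f(z)\,dz\in\Omega^{1}\{\CW\}$). I would then argue that the existence of the cylindrical/strip decomposition forces every residue of $\eta$ to be purely imaginary: a finite or semi-infinite cylinder face corresponds, under $\Psi$, to a region whose two boundary geodesics are identified by a real translation (the circumference), which is exactly the condition that the residue at the simple pole bounding the cylinder lies in $i\RR$; a strip or half-plane face corresponds to a zero-residue situation or contributes no obstruction. By the characterization of isochronous 1--forms recalled just before the proposition (all residues purely imaginary), this shows $\eta\in\mathcal{RI}\Omega^{1}\{\CW\}$, hence $\mathcal{F}=\eta\otimes\eta$ with $\eta$ rational isochronous.

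For the reverse inclusion, let $\eta\in\mathcal{RI}\Omega^{1}\{\CW\}$ and set $\mathcal{F}=\eta\otimes\eta=(f(z))^{2}\,dz^{2}$ where $\eta=f(z)\,dz$. I need to check that $\mathcal{F}$ has a polyhedral geometry, i.e.\ that the metric $g=|\mathcal{F}|$ decomposes $\mathcal{F}_{fin}$ into vertices (the finite critical points: zeros of $\mathcal{F}$, which are zeros of $\eta$, and simple poles of $\mathcal{F}$, which are the purely-imaginary-residue simple poles of $\eta$), edges (critical trajectories), and faces of the standard flat types. The essential point is that because all residues of $\eta$ are purely imaginary, the distinguished parameter $\Psi(z)=\int^{z}\eta$ is, locally around each simple pole, of the form $c\log(z-p)+\text{holomorphic}$ with $c\in i\RR$, so the metric $|\eta|^{2}$ near such a pole is a semi-infinite flat cylinder; near a simple zero it is the standard cone point with cone angle $3\pi$ (a "vertex"); and away from the critical graph the trajectory structure is locally that of $dt^{2}$, so each complementary component is flat and simply the $\Psi$-preimage of a half-plane, strip, or cylinder. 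One then invokes the general trajectory-structure theorems for quadratic differentials (which classify the global structure of trajectories of a rational quadratic differential on $\CW$ into precisely these domain types) to conclude that the decomposition into $\mathcal{F}_{vert}\cup\mathcal{F}_{edge}\cup\mathcal{F}_{face}$ is exactly a polyhedral geometry.

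I expect the main obstacle to be the first inclusion: pinning down precisely \emph{why} "has a polyhedral geometry" forces the residues to be purely imaginary rather than merely forcing the faces to be flat. The subtlety is that an arbitrary rational quadratic differential already has a trajectory decomposition into strips, half-planes, ring domains, spiral domains, and density domains; the extra hypothesis that all faces be of the listed (cylinder/strip/half-plane) types -- in particular \emph{no spiral domains and no ring domains with irrational rotation} -- is what translates to the residue condition, and this needs to be argued carefully by examining the holonomy of $\sqrt{\mathcal{F}}$ around the simple poles (a ring or cylinder around a simple pole closes up as a genuine flat cylinder precisely when the period $\oint\eta=2\pi i\,\mathrm{Res}$ is purely imaginary, otherwise one gets a spiraling foliation). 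I would handle this by a local normal-form computation at each simple pole combined with a global argument that the union of critical trajectories must be a finite graph (which itself uses isochronicity to rule out recurrent trajectories), so that the complement is a finite union of the flat standard pieces. The remaining verifications -- that zeros of $\eta$ give genuine vertices, that the gluing is isometric, and that no face of a non-standard type can appear -- are then routine consequences of the classical theory of quadratic differentials applied to $\mathcal{F}=\eta\otimes\eta$.
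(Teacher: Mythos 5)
Your proposal is correct and takes essentially the same route as the paper: the paper's (much terser) proof likewise identifies the trajectories of $\eta$ with those of $\mathcal{F}=\eta\otimes\eta$, observes that having a polyhedral geometry forces the union of critical trajectories to be the edges of a spherical polyhedron, concludes $\eta\in\mathcal{RI}\Omega^{1}\{\CW\}$, and declares the reverse inclusion obvious, so your holonomy/residue analysis at the poles simply supplies the details the paper leaves implicit. Two small slips to correct: a simple pole (resp.\ simple zero) of $\eta$ is a \emph{double} pole (resp.\ double zero, of cone angle $4\pi$, not $3\pi$) of $\mathcal{F}$, so $\mathcal{F}$ has no simple poles and the only vertices are the zeros of $\eta$; and a punctured neighborhood of a pole is swept by closed trajectories exactly when the period $\oint\eta=2\pi i\,\mathrm{Res}$ is \emph{real} (equivalently the residue is purely imaginary) --- you state this correctly in your first paragraph but invert it in your closing paragraph.
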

\begin{proof}
Of course the description $\mathcal{F}=(f(z))^{2}\,dz^{2}$ is equivalent to $\mathcal{F}=\eta\otimes\eta$ for 
$\eta=f(z)\, dz$.
Moreover, by definition, the trajectories of $\eta$ correspond to the trajectories of $\mathcal{F}$. 

\noindent
From the definition of polyhedral geometry for $\mathcal{F}$, it is required that the edges, 
$\mathcal{F}_{edge}$, be the union of critical trajectories of $\mathcal{F}$. In particular if $\mathcal{F}$ is to 
have polyhedral geometry then the union of critical trajectories of $\mathcal{F}$ must be the union of the 
edges of a spherical polyhedra. 
Thus $\eta\in\mathcal{RI}\Omega^{1}\{\CW\}$.

\noindent
The other inclusion is obvious.
\end{proof}

Recall that Theorem \ref{sufficientIsochronous} provides sufficient conditions that show when a rational 1--form 
$\eta= f(z)\, dz$ with simple poles and simple zeros is isochronous. 
Of course by choosing $\theta\in(0,\pi)$ we obtain examples of quadratic differentials 
$\mathcal{F}=(\e^{i\theta} f(z))^{2}dz^{2}$, for rational $f(z)$ which do not have a polyhedral geometry, even 
though the isotropy group $G_{\eta}$ of the associated 1--form $\eta=f(z)\, dz$ is a platonic group. 
We also have examples of 1--forms $\eta=f(z)\, dz$ invariant under a platonic group $G$ that can not be made 
isochronous, see Figure \ref{tetranoisocrono} for an example with $G_{\eta}=A_{4}$,
hence \emph{can not have a polyhedral geometry, yet its corresponding quadratic differential is rational}.

\begin{figure}[htbp]
\begin{center}
\includegraphics[height=0.5\textwidth]{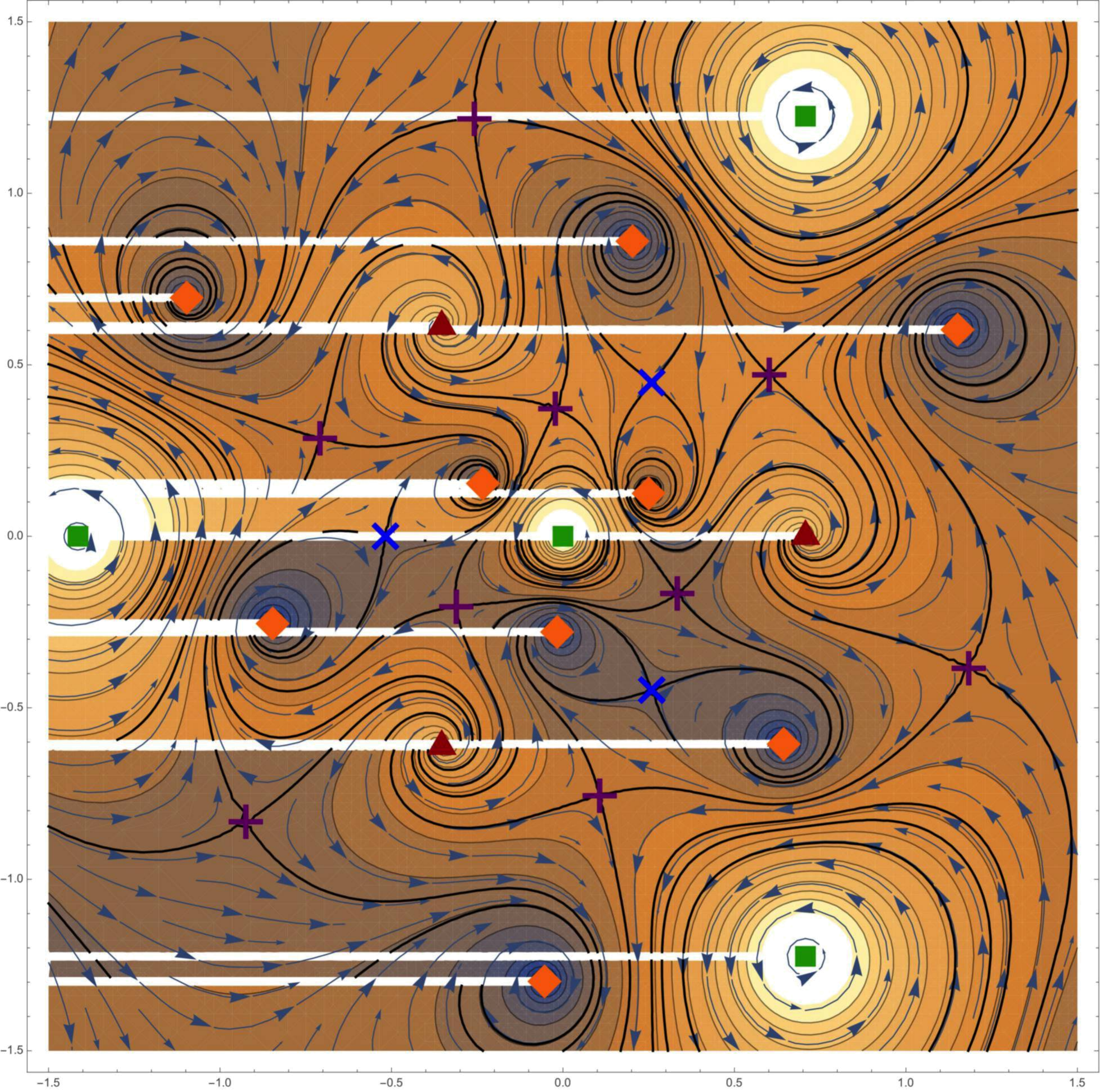}
\caption{Rational 1--form with isotropy group $A_{4}$ that is not isochronous, thus its QD is also rational but 
does not have polyhedral geometry as defined by J.~C.~Langer.
Poles appear as (green) squares, (orange) diamonds and (red) triangles, each orbit distinguished with a different symbol. 
Zeros appear as (red and blue) crosses, in this case each orbit appears with a different color. 
Note that in this picture we can only see the complete orbit for the (green) squares and the (orange) diamonds. 
}
\label{tetranoisocrono}
\end{center}
\end{figure}

\begin{example}\label{ejemploLangerOcta}
Let 
$$\eta(z)= \frac{z (1 - z^4)}{z^8 + 14 z^4 + 1} dz.$$ 
The phase portrait of the vector field associated to $\eta$ can be seen in Figure \ref{LangerOcta}.
J.C.~Langer \cite{Langer} shows that the quadratic differential $\eta\otimes\eta$ has polyhedral geometry of 
the octahedra (whose isotropy group is isomorphic to $S_{4}$).
Thus $\eta$ satisfies conditions (1) and (2) of Theorem \ref{caracterizacion1} with $G\cong S_{4}$, but its 
isotropy group is not $G$.
In fact, it's isotropy group is $G\cong A_{4}$ and it falls in case (a) of Theorem \ref{main2} with $\ell=0$ and 
$k=8$.

\noindent
A realization of the M\"obius polyhedra $\A=Tetrahedron$ is as follows: the vertices are 
$V(\A)=\{\frac{-1+\sqrt{3}}{2}e^\frac{i\pi}{4},-\frac{-1+\sqrt{3}}{2}e^\frac{i\pi}{4},\frac{1+\sqrt{3}}{2}e^\frac{-i\pi}{4},-\frac{1+\sqrt{3}}{2}e^\frac{-i\pi}{4}\}\subset\CW$, the centers of the edges are 
$E(\A)=\{0,-1,1, i, -i, \infty\}$ and the centers of faces are $F(\A)=\{\frac{-1+\sqrt{3}}{2}e^\frac{-i\pi}{4},-\frac{-1+\sqrt{3}}{2}e^\frac{-i\pi}{4},\frac{1+\sqrt{3}}{2}e^\frac{i\pi}{4},-\frac{1+\sqrt{3}}{2}e^\frac{i\pi}{4}\}$, once again 
see Figure \ref{LangerOcta}.
\end{example}

\begin{figure}[htbp]
\begin{center}
\includegraphics[height=0.45\textwidth]{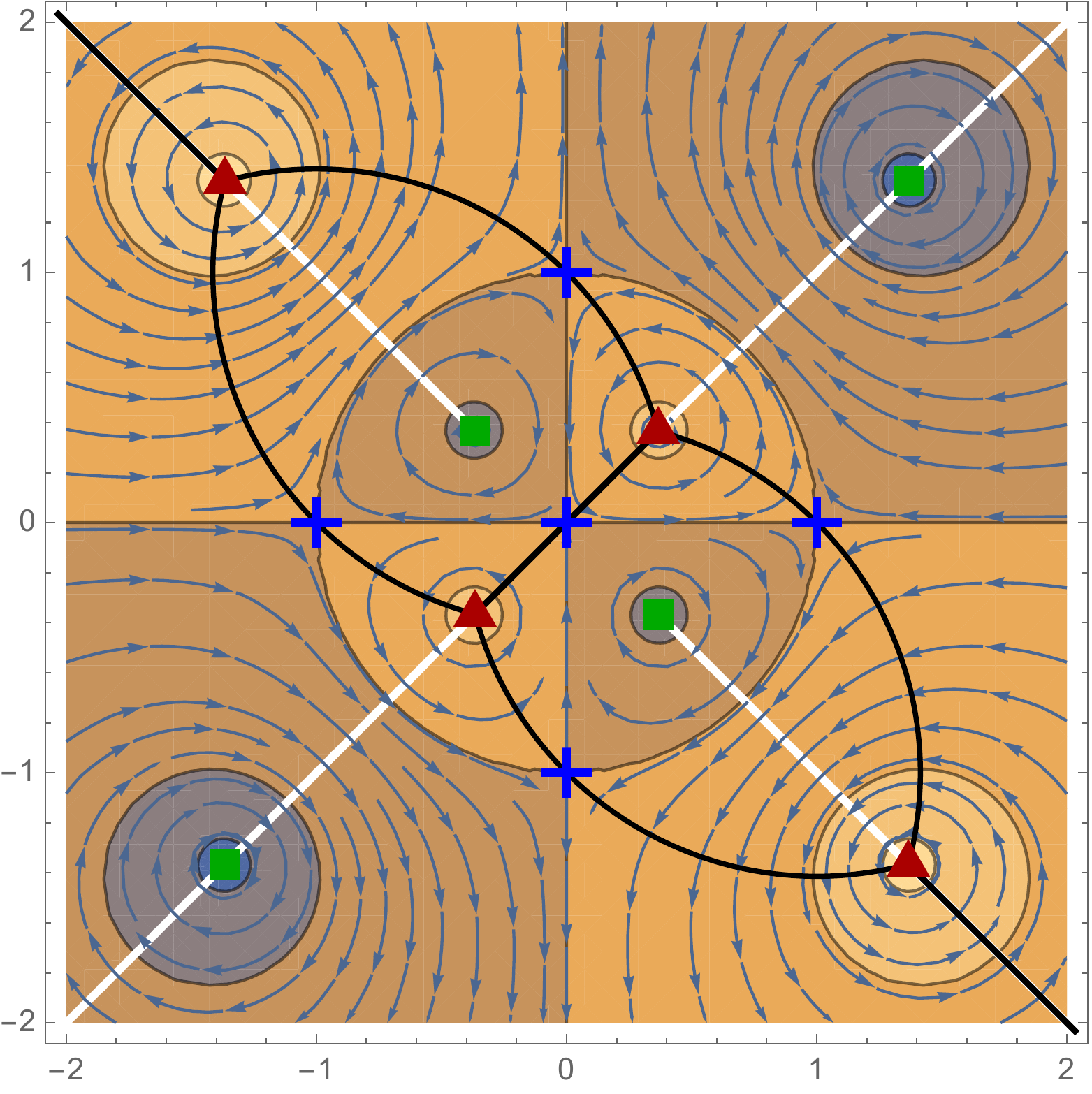}
\caption{Phase portrait of $\eta(z)= \frac{z (1 - z^4)}{z^8 + 14 z^4 + 1} dz$. The set of poles and zeros is 
invariant under $G\cong S_{4}$ the isotropy group of a octahedron. However the isotropy group of $\eta$ is 
$G\cong A_{4}$, the isotropy group of a tetrahedron. The vertices of the tetrahedron are (red) triangles, the centers of edges are (blue) crosses ($\infty\in\CW$ is also a center of an edge), and the centers of faces are (green) squares.
}
\label{LangerOcta}
\end{center}
\end{figure}

\begin{remark}
In our work we search for the symmetries of the rational 1--forms, hence in Figure \ref{LangerOcta} we observe 
a tetrahedron. However J.~C.~Langer observes an octahedron since he is searching for polyhedral 
geometries, where the edges of the polyhedron are the critical trajectories.
\end{remark}


\section{Examples} \label{ejemplos}
In this section we show a example of $1$-form with isotropy $G$, for platonic subgroups and some dihedral 
and cyclic subgroups of $\PSL$. 
\subsection{The case of $A_{4}$}
\label{a4}
Consider the tetrahedron $\A_{1}$ together with its isometry group $\G_{1}\cong A_{4}$ from 
Lemma \ref{classFinitos}. For simplicity we shall use $\A$ and $G$ instead of $\A_{1}$ and $G_{1}$.

To construct the 1--form we set four simple poles at the vertices of the tetrahedron $\A$, another four simple 
poles at the centers of faces and six simple zeros at the midpoints of the edges. See figure \ref{figuraa4}.
\\
With this construction the 1--form is:
\begin{equation}\label{tetra}
\eta=f(z)\:dz=\lambda\ \frac{4 z^6-20 \sqrt{2} z^3-4}{4 z^7+7 \sqrt{2} z^4-4 z} \:dz.
\end{equation}

\begin{figure}[htbp]
\begin{center}
\includegraphics[width=.4\textwidth]{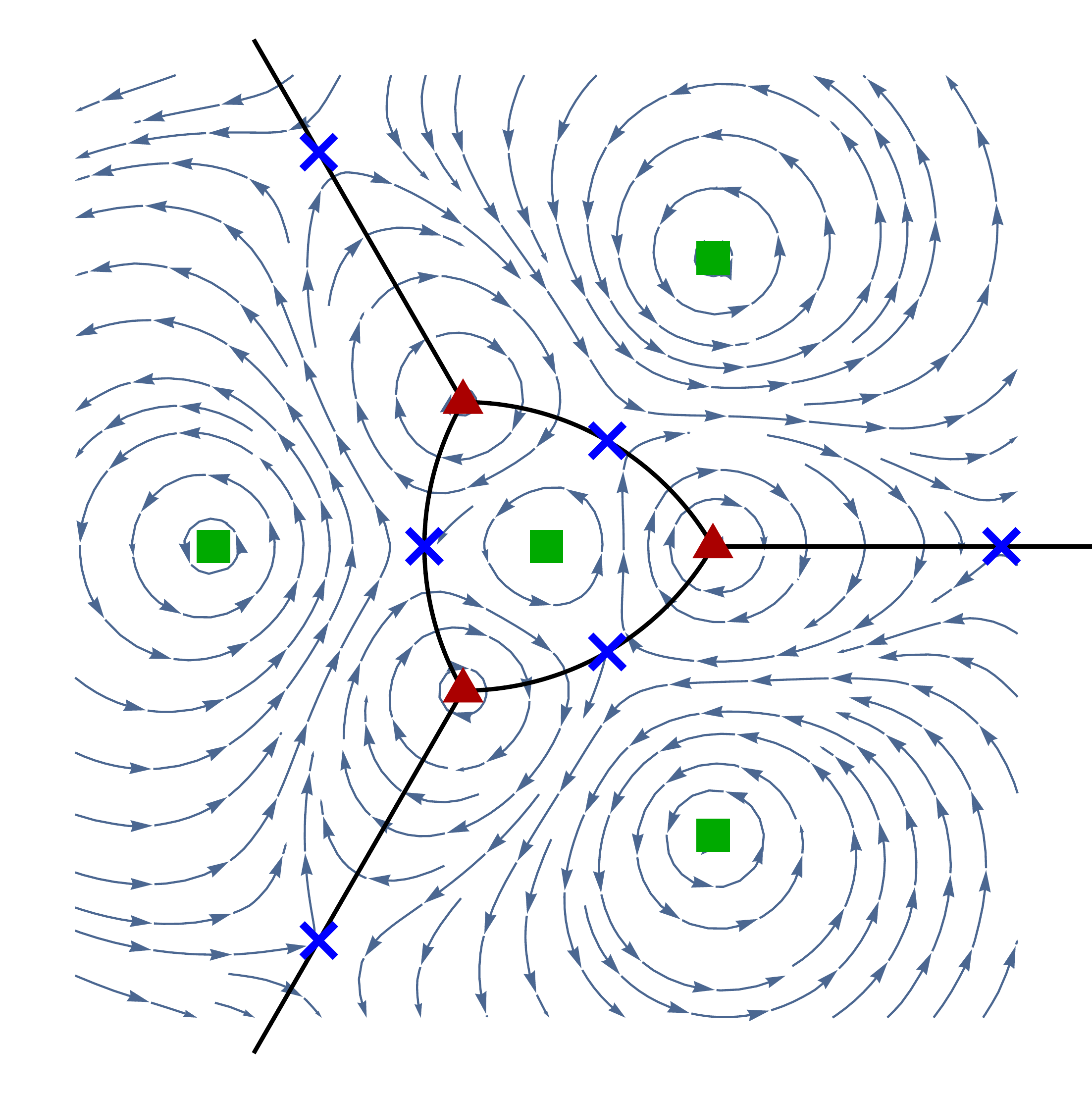}
\hskip 10pt
\includegraphics[height=.4\textwidth]{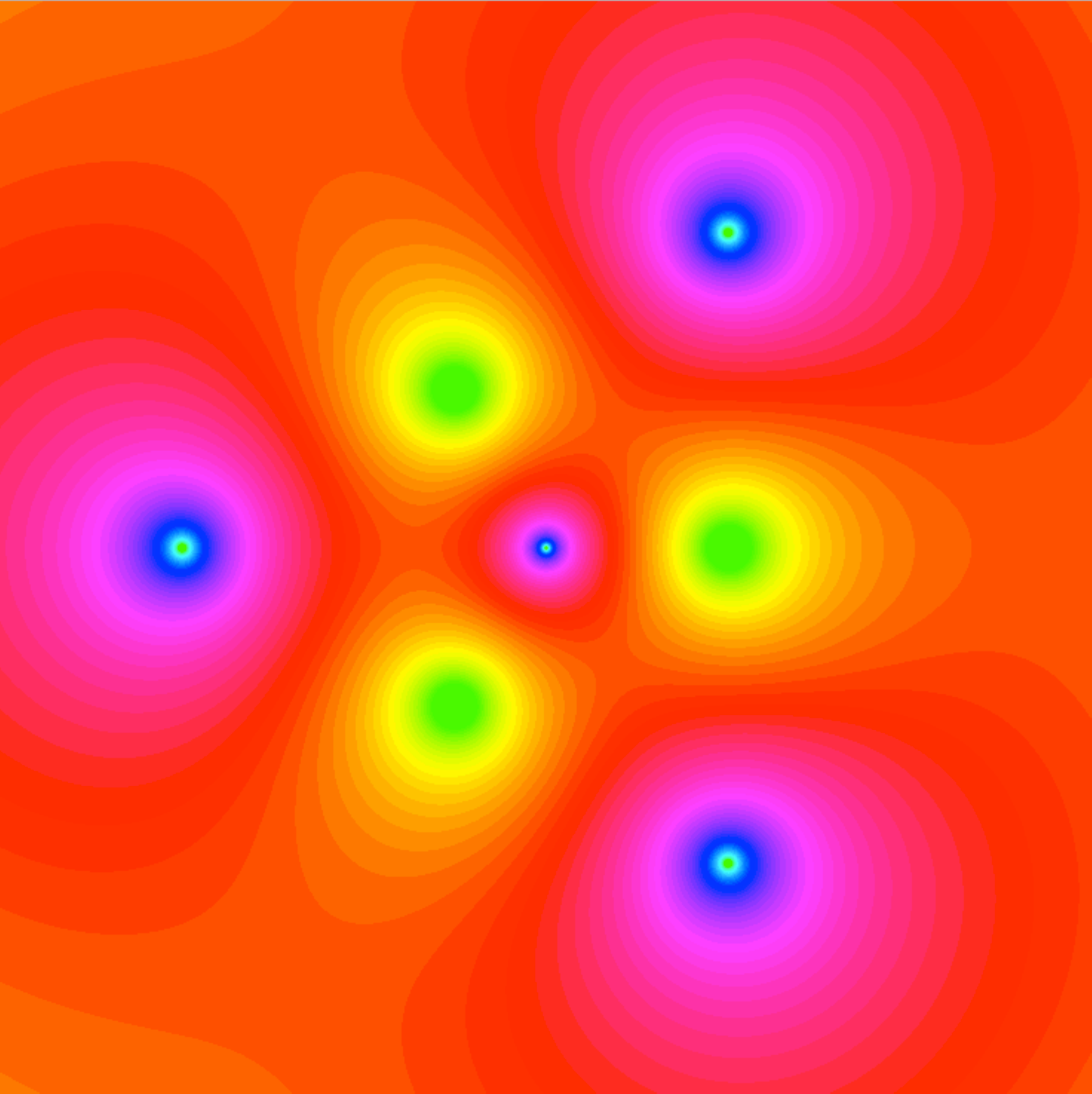}
\\
\includegraphics[width=.4\textwidth]{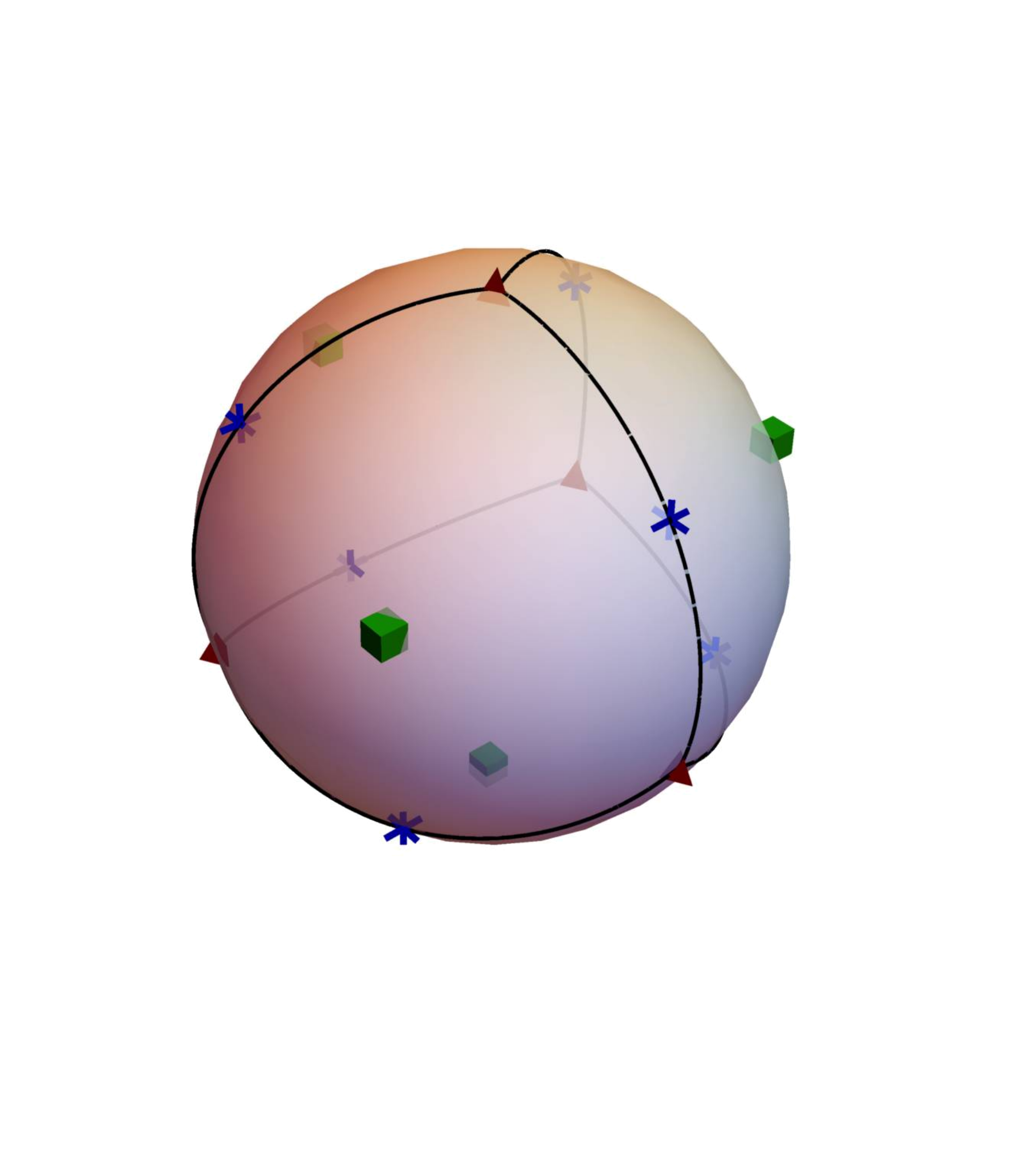}
\hskip 10pt
\includegraphics[height=.38\textwidth]{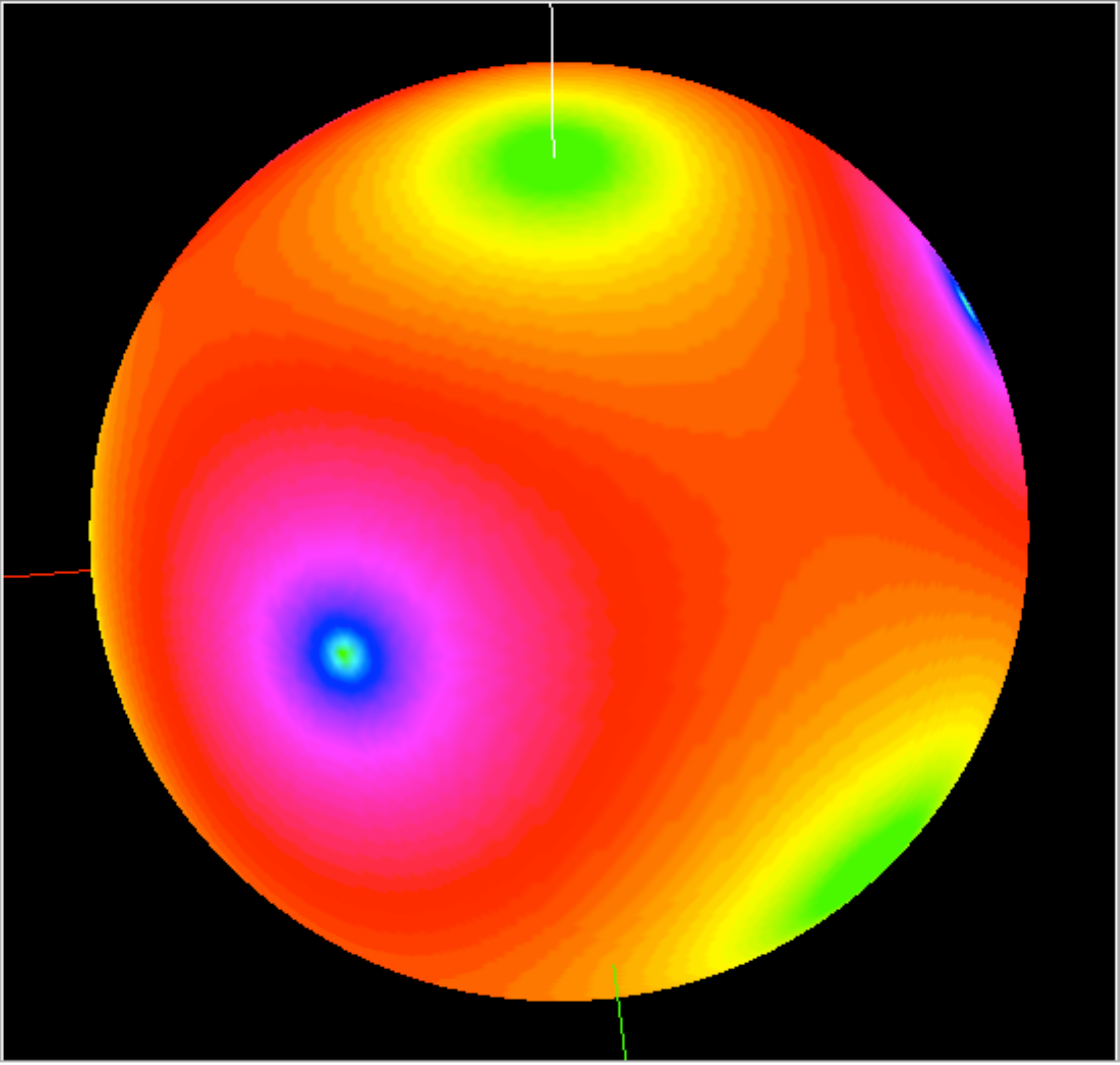}
\caption{Phase portrait of the field associated to the 1--form \eqref{tetra}. This corresponds to the tetrahedron 
and has isometry group isomorphic to $A_{4}$. In this figure we set $\lambda=-i$ so that the 
poles (zeros of the corresponding field) are centers. 
On the right hand side, the poles placed at the vertices appear as the center of the large light colored disks (green surrounded by 
yellow), while the poles placed at the centers of the faces appear as the center of the small disks surrounded 
by dark circles (blue surrounded by purple).
On the left hand, side vertices appear as (red) triangles, centers of edges appear as (blue) crosses and centers of faces appear as (green) squares.
}
\label{figuraa4}
\end{center}
\end{figure}
\noindent
By Theorem \ref{main2}.a with $\ell=0$, $\eta$ is $G$--invariant with $G\cong A_{4}$. 

In order to check that the maximality condition of Theorem \ref{main2} is satisfied, 
note that the only finite subgroups of $\PSL$ that could possibly contain $G\cong A_{4}$ are 
subgroups isomorphic to $S_{4}$. 
However, since $\eta$ has exactly 8 simple poles (and 6 simple zeros) there are not enough (simple) poles to 
place one on each of the vertices and centers of faces of an octahedron or a cube 
(see for instance Table \ref{tablaBuena}). 
Hence $\eta$ can not satisfy the requirements of Theorem \ref{main2} for $\A$ an octahedron or a cube. 
Thus $\eta$ is not $G$--invariant for $G\cong S_{4}$

\noindent
Hence the isotropy group of $\eta$ is $G\cong A_4$. 


\subsection{The case of $S_{4}$}
\label{s4}
Consider the octahedron $\A_{2}$ together with its isometry group $\G_{2}\cong S_{4}$ from 
Lemma \ref{classFinitos}.
For simplicity we shall use $\A$ and $G$ instead of $\A_{2}$ and $G_{2}$.

To construct the 1--form, we set the simple poles at the centers of the faces and at the vertices, also we set the 
simple zeros at the midpoints of the edges. See figure \ref{figures4}.
\\
The 1--form thus constructed is:
\begin{equation}\label{octa}
\eta=f(z)\:dz=- \lambda\ \frac{1-33 z^4-33 z^8+z^{12}}{z+13 z^5-13 z^9-z^{13}} \:dz.
\end{equation}
\noindent
By Theorem \ref{main2}.a with $\ell=0$, $\eta$ has isotropy subgroup $G\cong S_{4}$. 
To see that the phase portrait of the associated field is isochronous we verify that all residues of $\eta$ are  
real multiples of $\lambda$. Hence when $\lambda$ is pure imaginary $\eta$ is isochronous.

\begin{figure}[htbp]
\begin{center} 
\includegraphics[width=.4\textwidth]{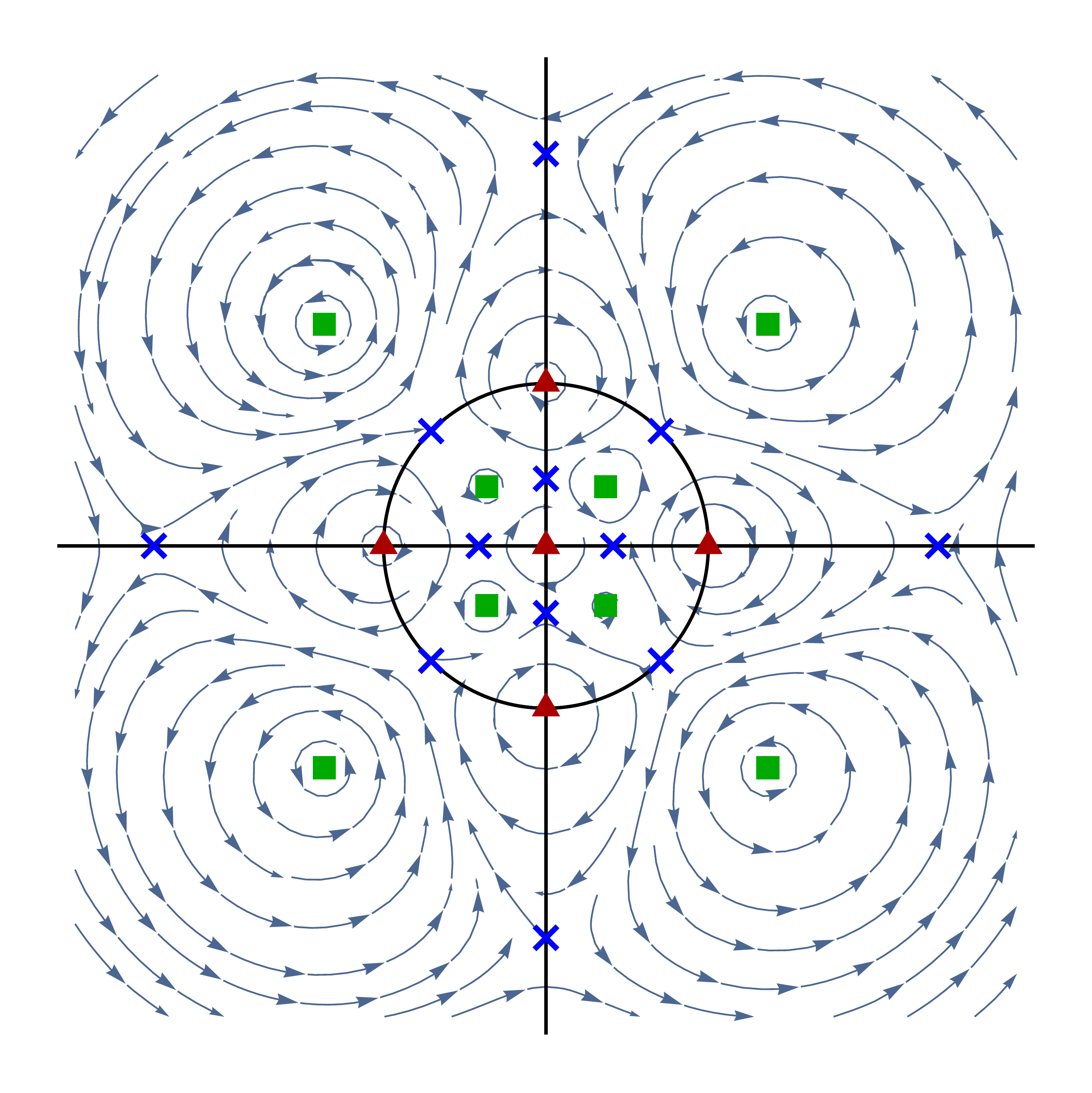}
\hskip 8pt
\includegraphics[width=.4\textwidth]{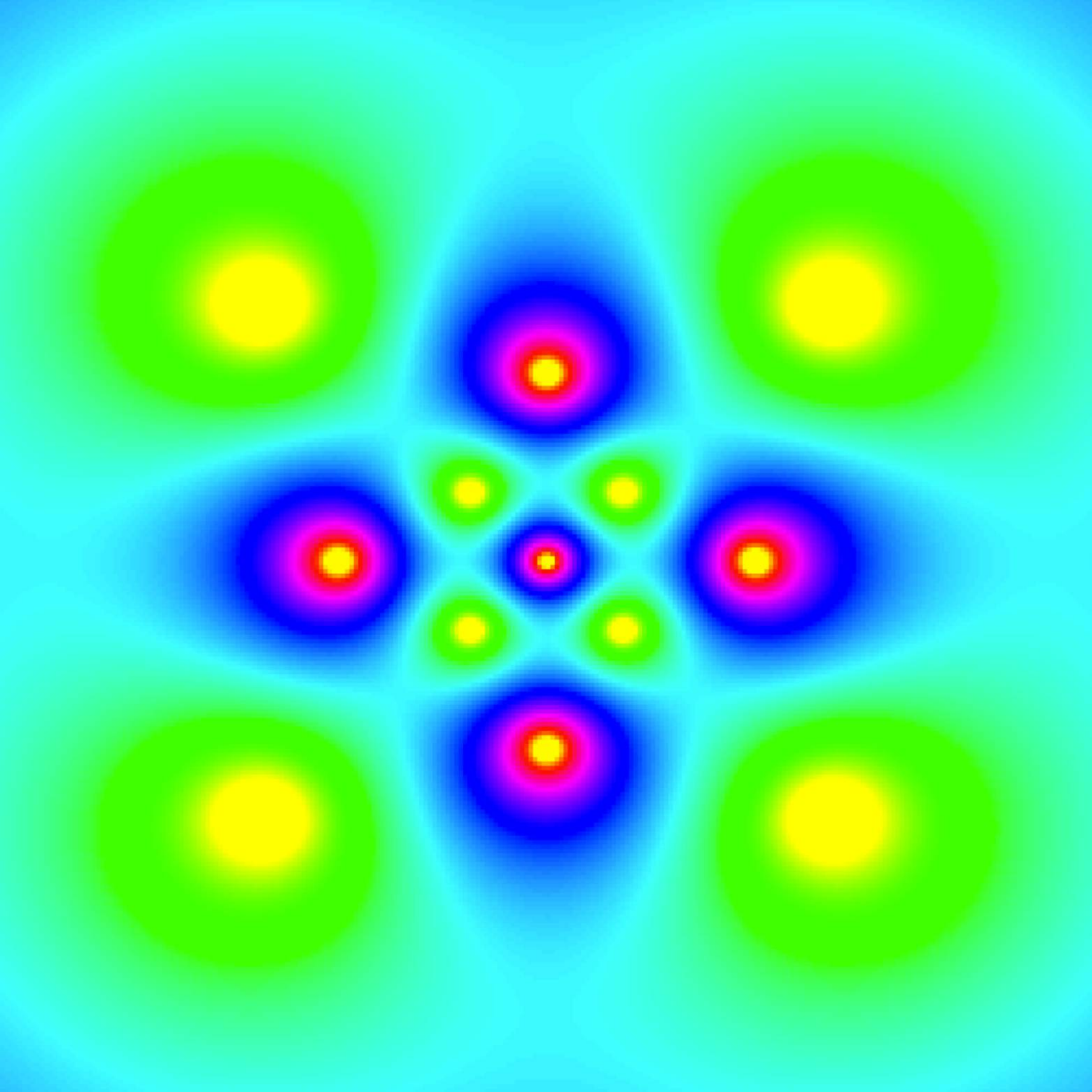}
\\
\includegraphics[width=.375\textwidth]{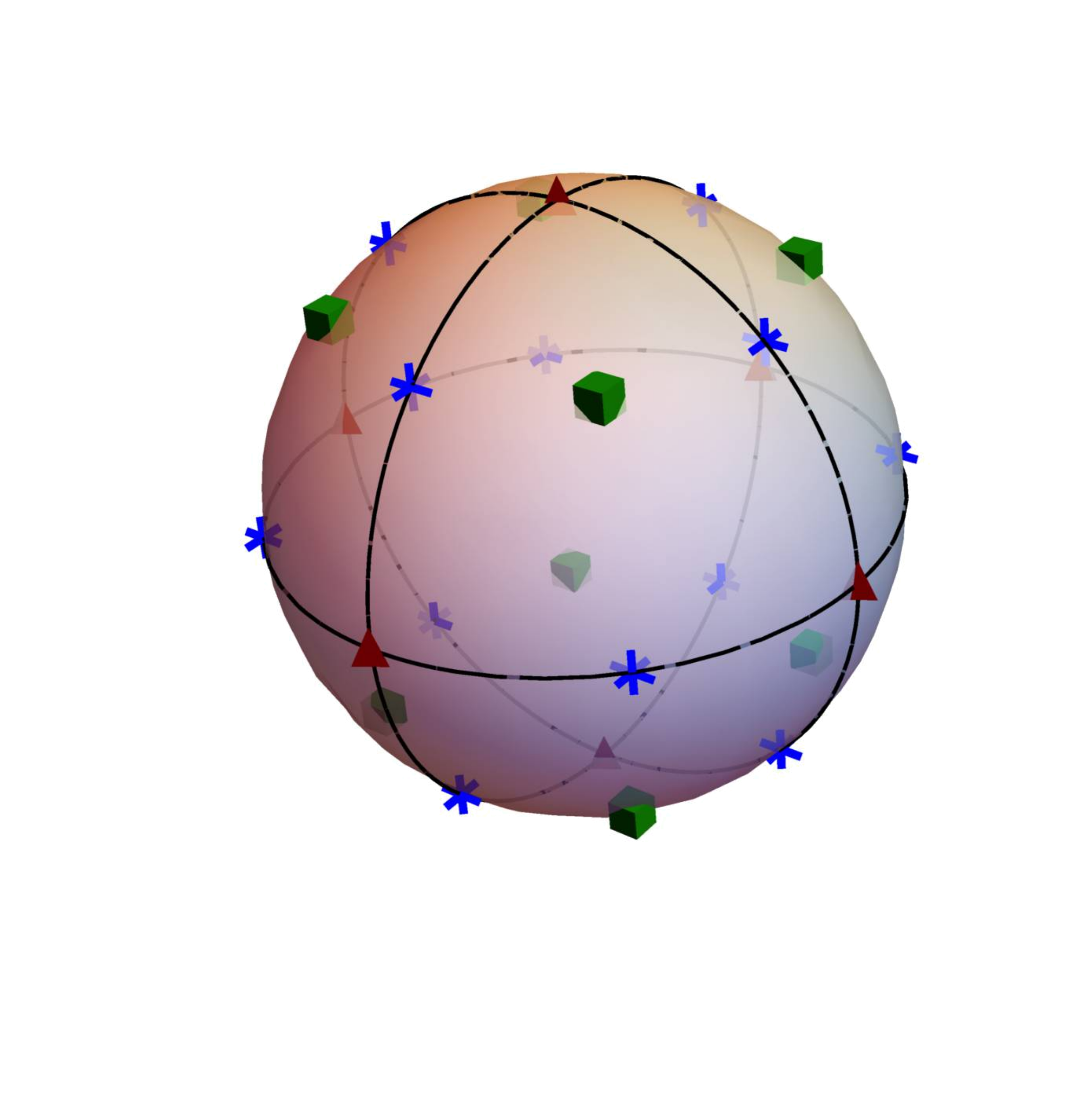}
\hskip 16pt
\includegraphics[width=.4\textwidth]{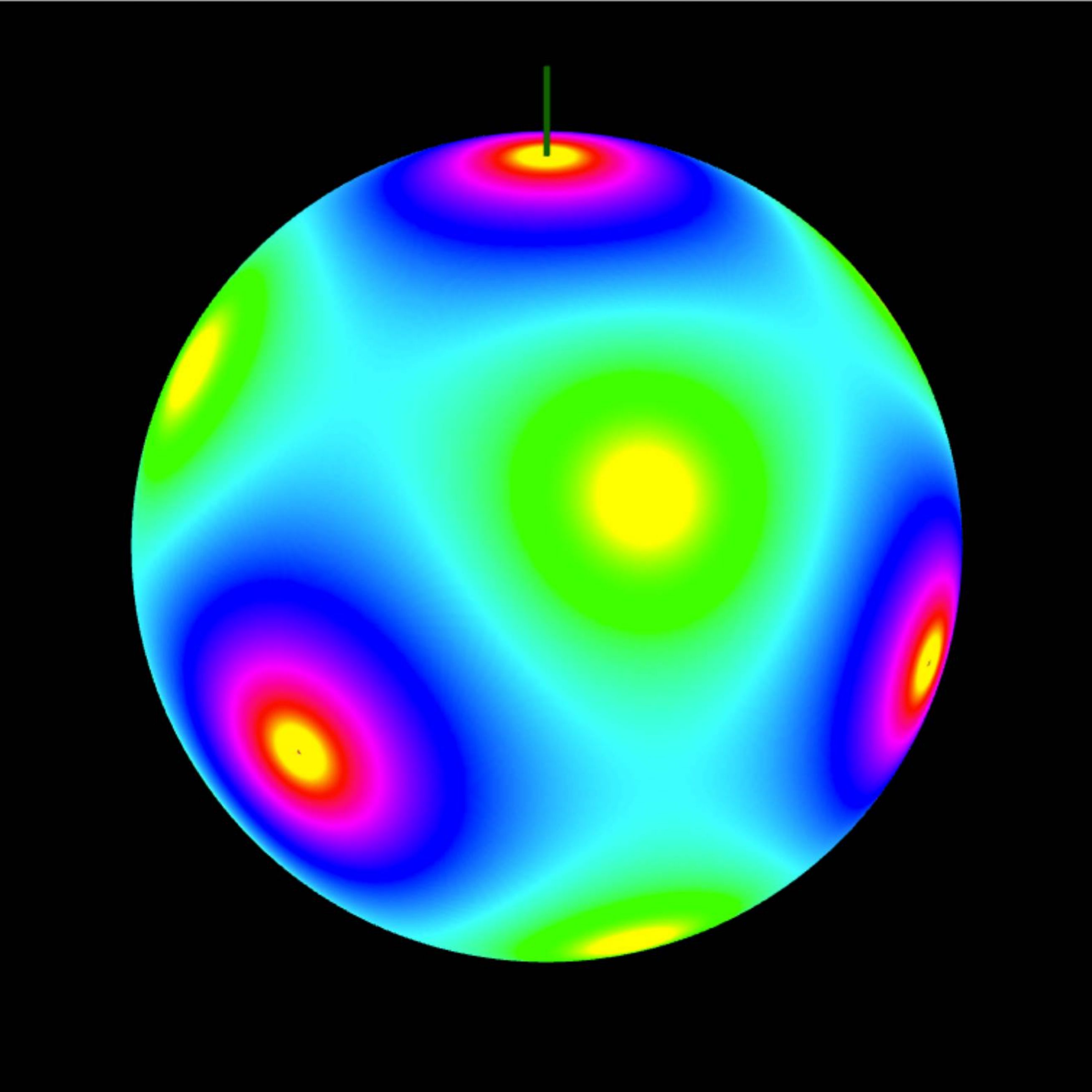}
\caption{Phase portrait of the field associated to the 1--form $\eta$ given by \eqref{octa}. 
Note that $\eta$ is invariant under the isometry group of the octahedron/cube which is isomorphic to
$S_{4}$. In this figure $\lambda= -i$ so that the poles (zeros of the corresponding field) are centers.
On the right hand side, the poles placed at the vertices appear as the centers of the darker concentric anular regions (yellow surrounded by purple and blue), while the poles placed at the 
centers of the faces appear as the centers of the lighter colored disks (yellow surrounded by green).
On the left hand side, vertices appear as (red) triangles, centers of edges appear as (blue) crosses and centers of faces appear as (green) squares.}
\label{figures4}
\end{center}
\end{figure}


\subsection{The case of $A_{5}$}
\label{a5}
Consider the icosahedron $\A_{4}$ together with its isometry group $\G_{4}\cong A_{5}$ from 
Lemma \ref{classFinitos}.
For simplicity we shall use $\A$ and $G$ instead of $\A_{4}$ and $G_{4}$.

Once again we set simple poles at the vertices and at the centers of the faces, and we set simple zeros at the 
midpoints of edges. 
\\
A routine computation then shows that the 1--form thus obtained is
\begin{equation} 
\label{dodecaedro}
\eta = \lambda
\frac{1 - 522 z^5 - 10005 z^{10} - 10005 z^{20} + 522 z^{25} + z^{30}}
{-z - 217 z^6 + 2015 z^{11} + 5890 z^{16} - 2015 z^{21} - 217 z^{26} + z^{31}}
dz.
\end{equation}
By Theorem \ref{main2}.a with $\ell=0$, $\eta$ has isotropy subgroup $G\cong A_{5}$. 
To see that the phase portrait of the associated field is isochronous we verify that all residues of $\eta$ are  
real multiples of $\lambda$. Hence when $\lambda$ is pure imaginary $\eta$ is isochronous.
The phase portrait of the associated field, with $\lambda=-i$, is shown in figure \ref{figurea5}.

\begin{figure}[htbp]
\begin{center}
\includegraphics[width=.35\textwidth]{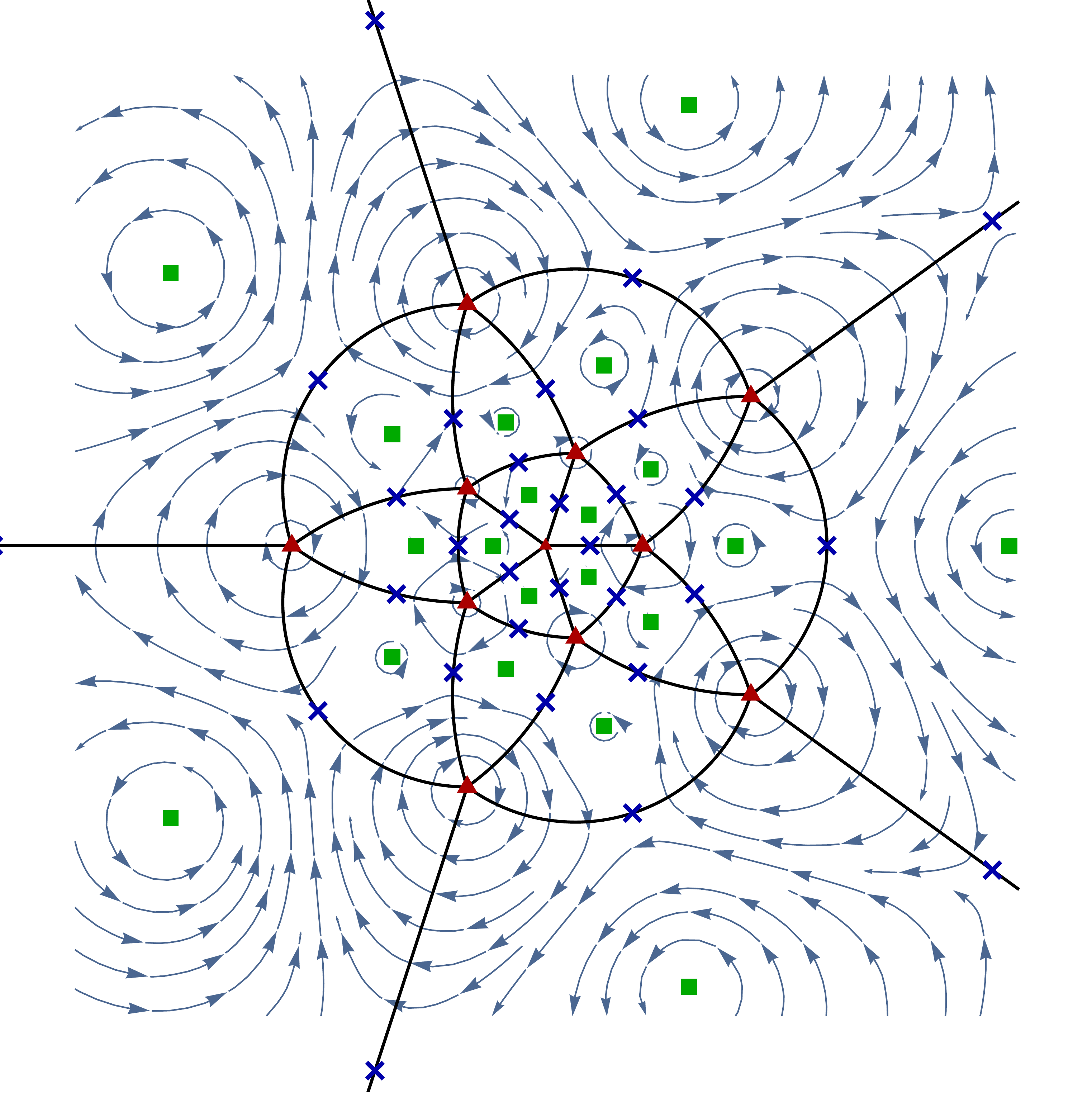}
\hskip 10pt
\includegraphics[width=.35\textwidth]{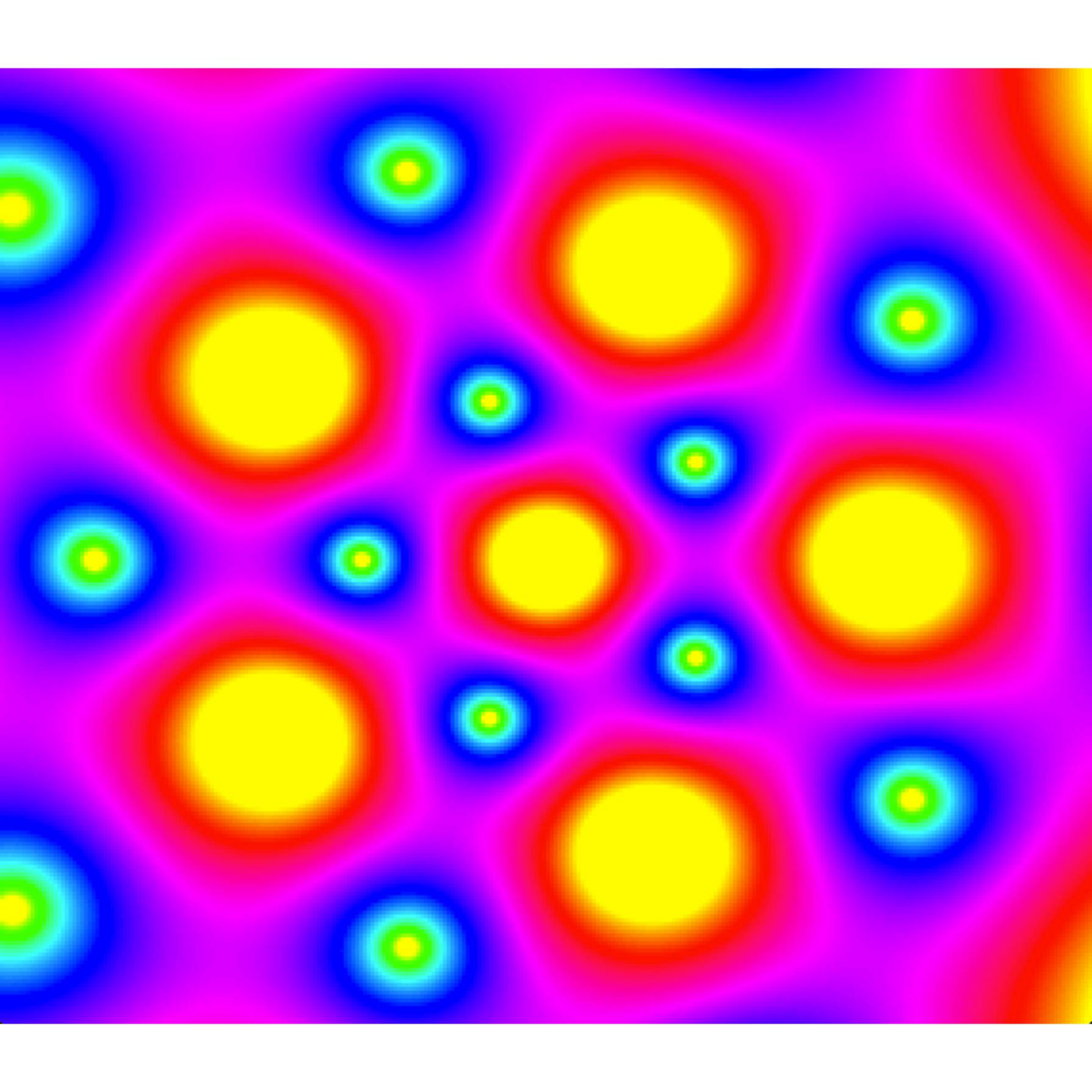}
\\
\includegraphics[width=.35\textwidth]{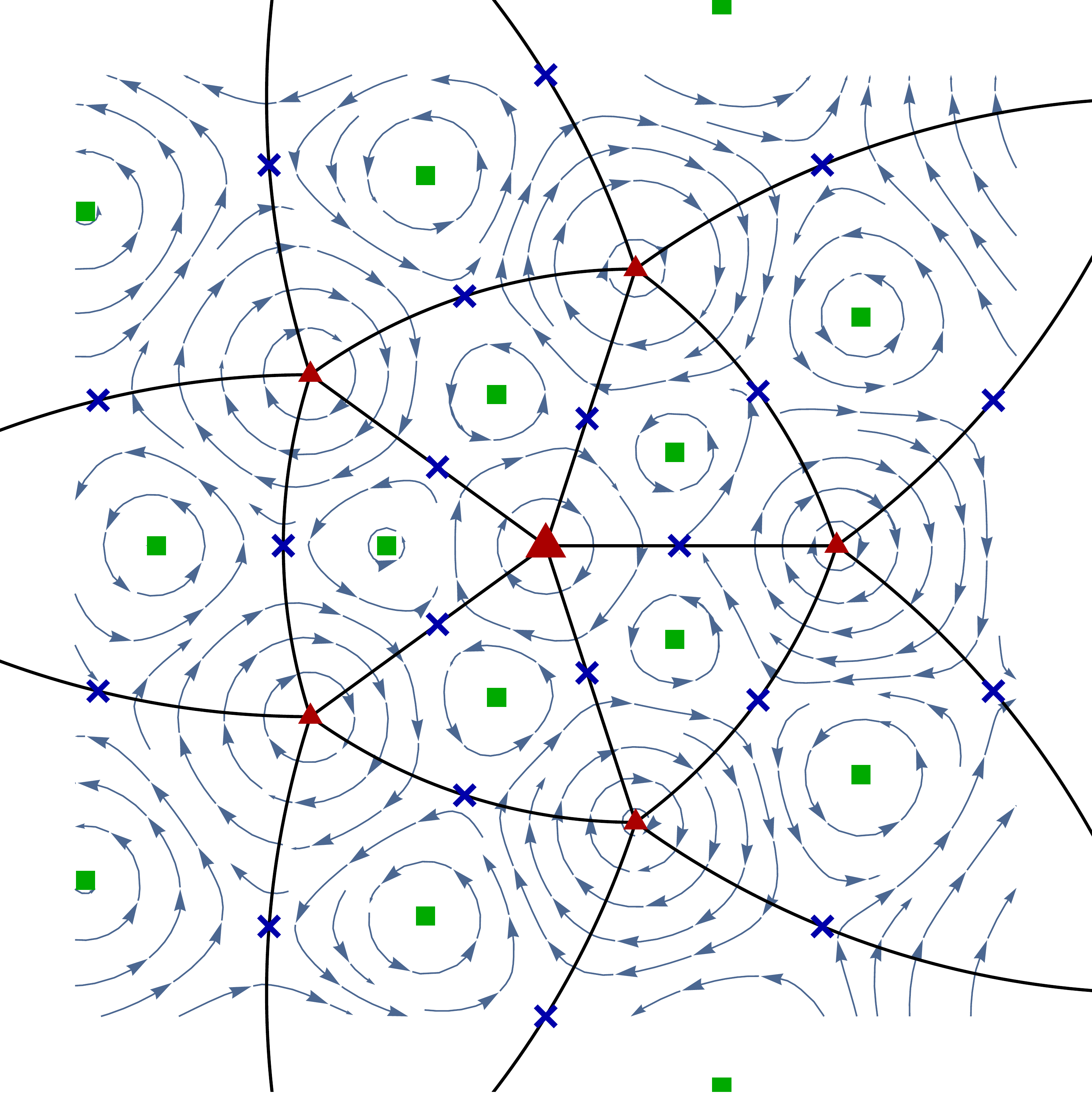}
\hskip 10pt
\includegraphics[width=.35\textwidth]{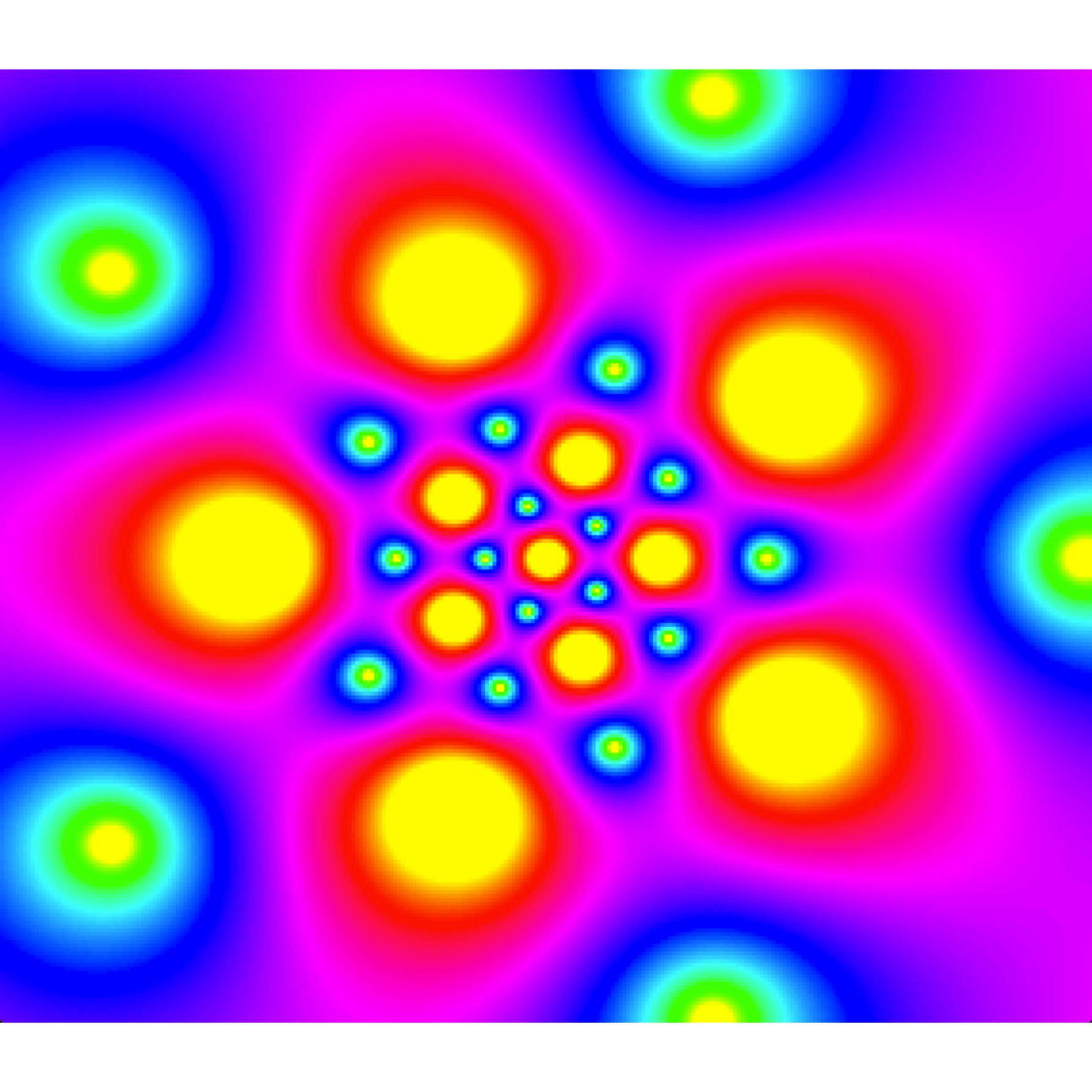}
\\
\includegraphics[width=.35\textwidth]{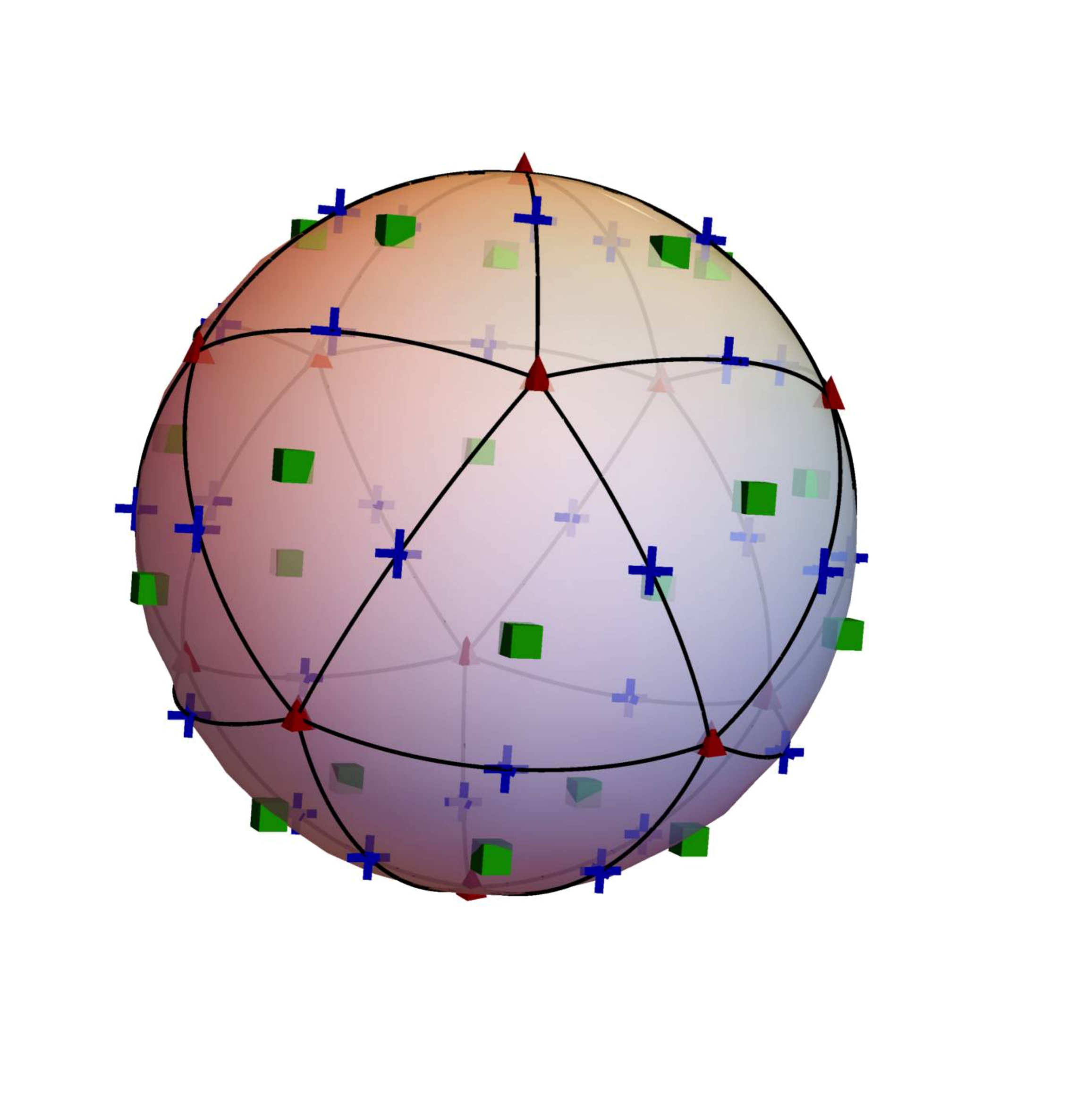}
\hskip 10pt
\includegraphics[width=.35\textwidth]{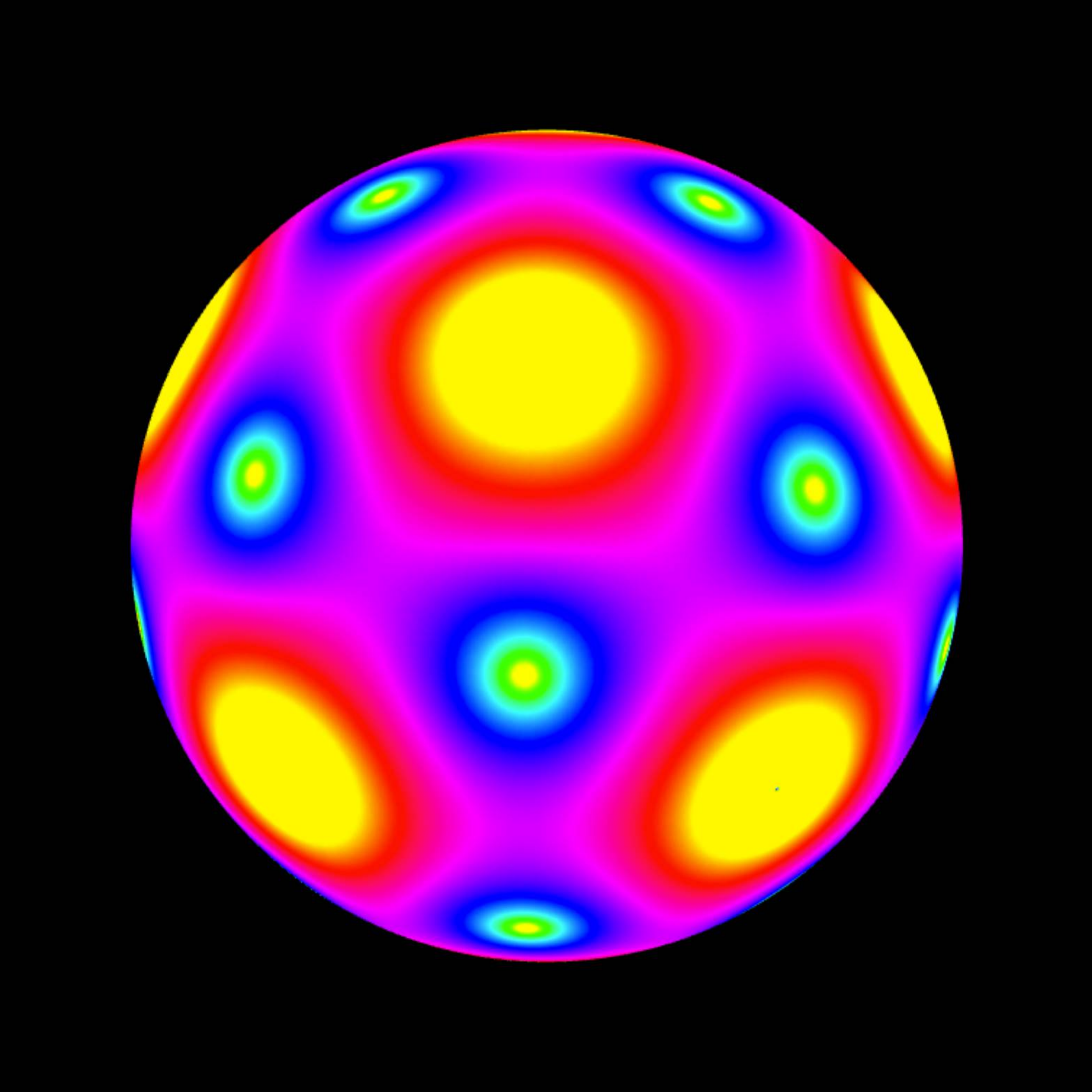}
\caption{Phase portrait of the field associated to the 1--form \eqref{dodecaedro}, corresponding to the 
dodecahedron/icosahedron whose isometry group is isomorphic to $A_{5}$. 
The topmost figures correspond to the field visualized on the rectangle $[-3,3]\times [-3,3]$ while the middle 
figures correspond to the rectangle $[-1,1]\times [-1,1]$. This is done in order to better observe the centers in 
the inner pentagon.
The bottom figures correspond to the field visualized on the Riemann sphere, where one can clearly 
appreciate the symmetries.
To see that the phase portrait of associated field is isochronous we verify that all residues of $\eta$ are real.
Once again $\lambda= -i$ so that the 
poles (zeros of the corresponding field) are centers. 
On the right hand side, the poles placed at the vertices appear as the centers of the lighter colored disks (yellow surrounded by red), 
while the poles placed at the centers of the faces appear as the centers of the darker concentric annular 
regions (yellow surrounded by blue).
On the left hand, side vertices appear as (red) triangles, centers of edges appear as (blue) crosses and centers of faces appear as (green) squares.
}
\label{figurea5}
\end{center}
\end{figure}


\subsection{Dihedral groups $\DD_{n}$}
\label{dihedric}
Consider the dihedron $\A_{6}$ together with its isometry group $\G_{6}\cong \DD_{n}$ from 
Lemma \ref{classFinitos}.
For simplicity we shall use $\A$ and $G$ instead of $\A_{6}$ and $G_{6}$.

We procede in the same way as before, that is we set simple poles on the vertices of the diehdron (the 
$n$--th roots of unity) and on the center of the faces ($0$ and $\infty$), and we set simple zeros on centers of 
the edges (the $n$--th roots of $-1$). In this way we obtain the 1--form:
\begin{equation}\label{diedricoN}
\eta_{n}=f(z)\:dz=\lambda\ \frac{z^n+1}{z(z^n-1)}dz.
\end{equation}

By Theorems \ref{teoremaDihedrico}.A.a (when $n\geq3$) and \ref{D22}.A.a (when $n=2$) with $\ell=0$, 
$\eta_{n}$ is invariant under the group $G\cong \DD_{n}$. 

In order to check that the maximality conditions of Theorems \ref{teoremaDihedrico} and \ref{D22} are 
satisfied, first note that the only finite subgroups of $\PSL$ that could possibly contain 
$G\cong \DD_{n}$ are subgroups $\widehat{G}$ isomorphic to 
\begin{enumerate}
\item $\DD_{m}$ for $n\vert m$, 
\item $A_{4}$ for $n=2$ (see for instance \cite{subA4}), 
\item $S_{4}$ for $n=4$ (see for instance \cite{subS4}), 
\item $A_{5}$ for $n=2,5$ (see for instance \cite{subA5}). 
\end{enumerate}
On the other hand $\eta_{n}$ has exactly $n+2$ simple poles, on $V(\A)\cup F(\A)$, and $n$ simple zeros, 
on $E(\A)$.
From Theorem \ref{teoremaDihedrico} the case $\widehat{G}\cong \DD_{m}$ for $m>n\geq 2$ is not possible. 
From Theorem \ref{main2} the cases $\widehat{G}\cong A_{4}, S_{4}, A_{5}$ are not possible either ($A_{4}$ 
needs 8 poles on $V(\A)\cup F(\A)$ but $\eta_{2}$ only has 4 poles; $S_{4}$ needs 14 poles on 
$V(\A)\cup F(\A)$ but $\eta_{4}$ only has 6 poles; $A_{5}$ needs 32 poles but $\eta_{n}$ has 4 and 7 poles 
respectively for $n=2$ and $5$).

\noindent
Hence we conclude that in fact the isotropy group of $\eta_{n}$ is $\DD_{n}$ for $n\geq 2$.

The phase portrait of the associated field is isochronous since all residues of $\eta$ are real multiples of 
$\lambda$, hence by requiring that $\lambda$ be purely imaginary $\eta$ is isochronous. 
See Figure \ref{diedrico5} for the phase portrait of the associated vector field.

\begin{figure}[htbp]
\begin{center}
\includegraphics[width=.4\textwidth]{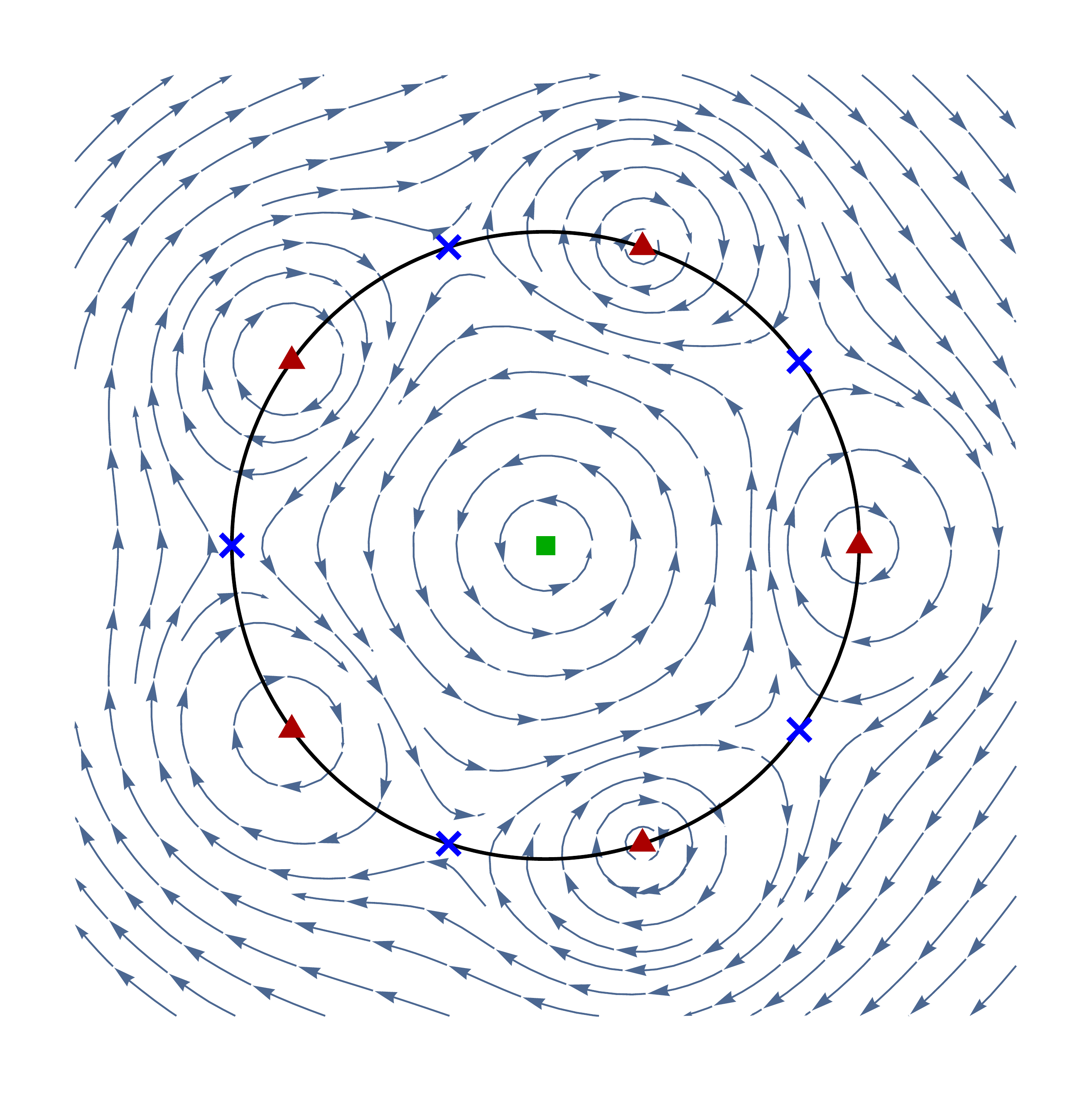}
\hskip 8pt
\includegraphics[width=.4\textwidth]{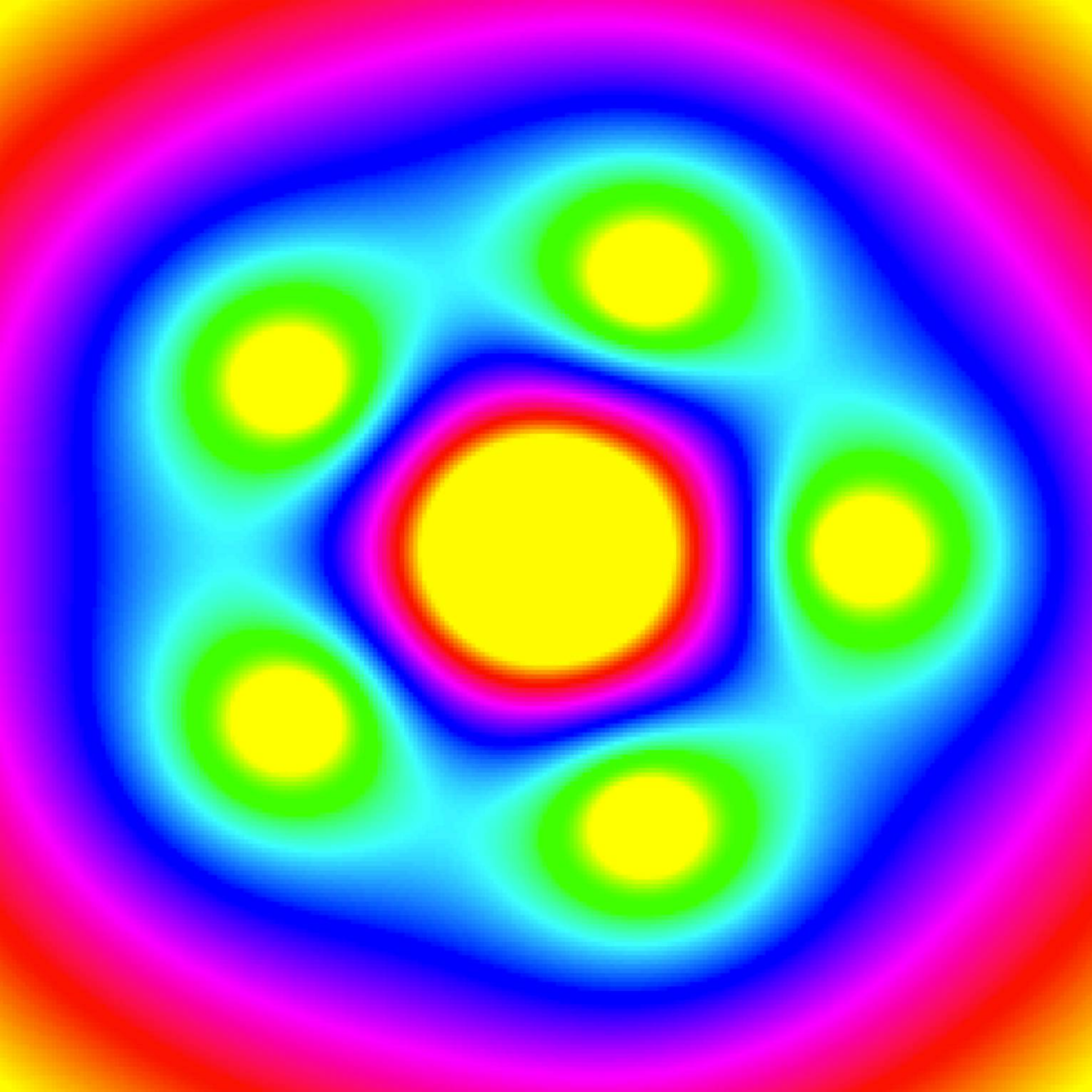}
\\
\includegraphics[width=.375\textwidth]{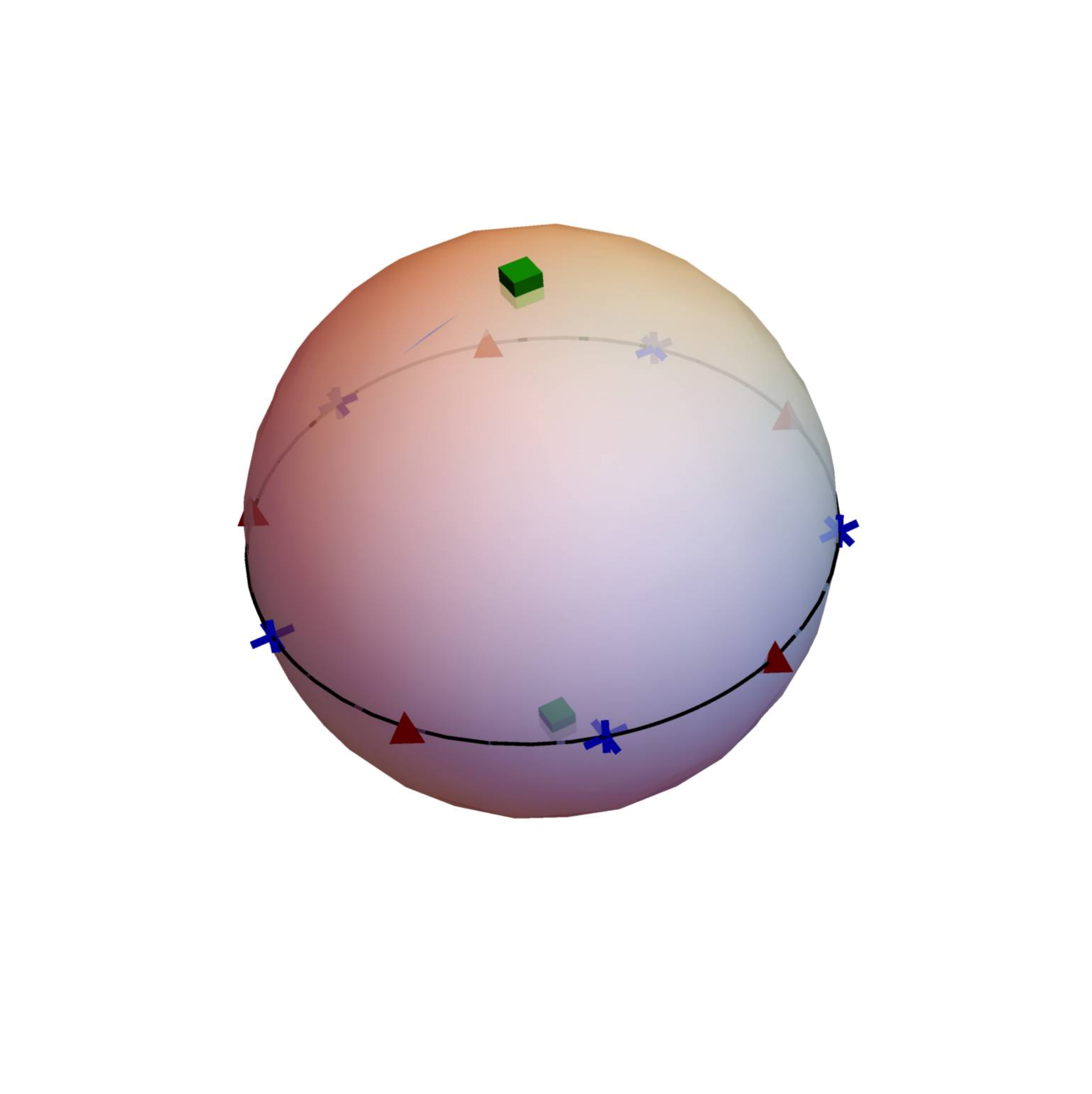}
\hskip 16pt
\includegraphics[width=.4\textwidth]{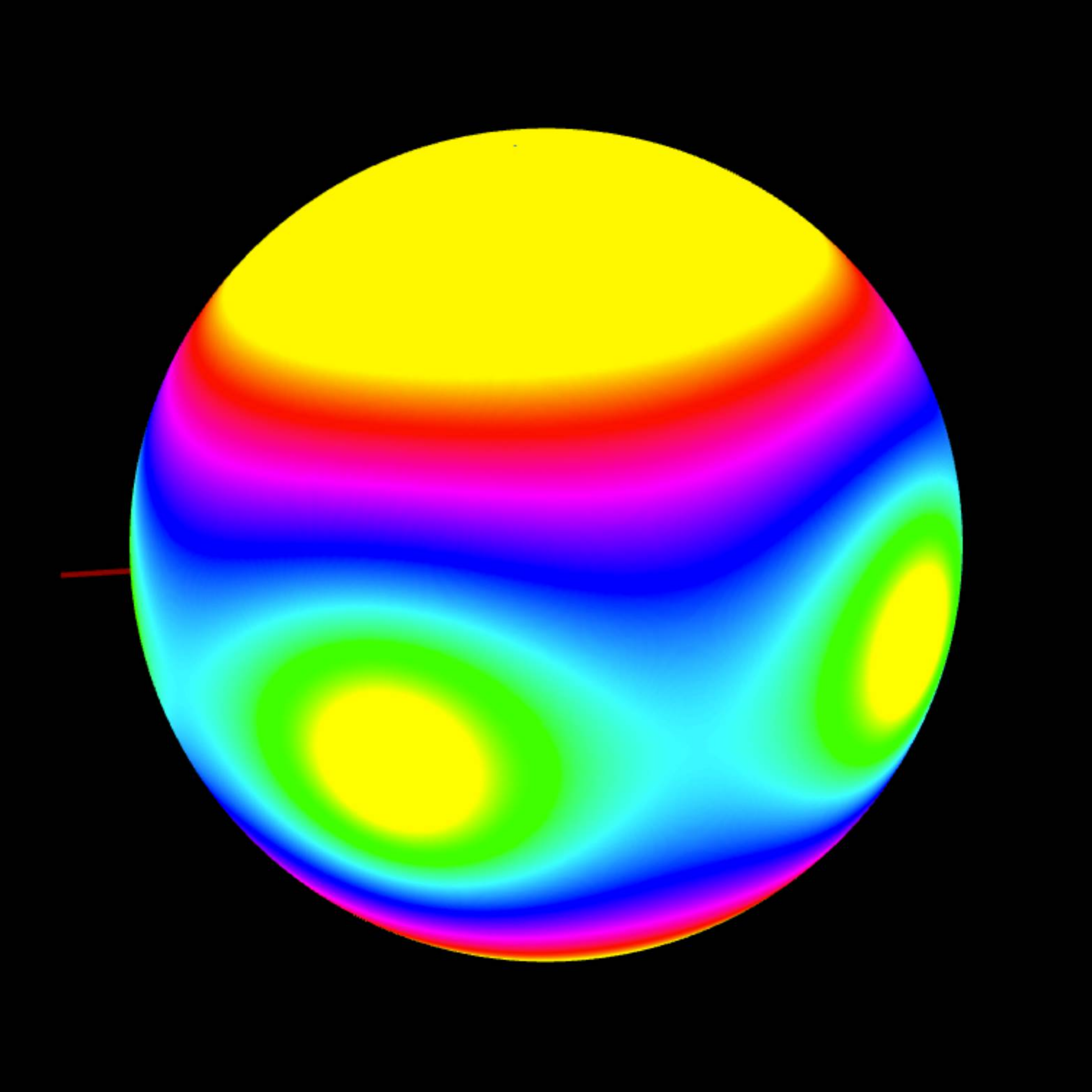}
\caption{Phase portrait of the field associated to the dihedral 1--form \eqref{diedricoN} with $n=5$.
Thus we have the isotropy group isomorphic to $\DD_{5}$. Once again $\lambda= -i$ so that the 
poles (zeros of the corresponding field) are centers. 
On the right hand side, the poles placed at the vertices appear as centers of the smaller light colored disks (yellow surrounded by 
green), while the poles placed at the centers 
of the faces appear as the centers of the large light colored disks surrounded by darker colored annular 
regions (yellow surrounded by red, purple and blue).
On the left hand side, vertices appear as (red) triangles, centers of edges appear as (blue) crosses and centers of faces appear as (green) squares.
}
\label{diedrico5}
\end{center}
\end{figure}


\subsection{Cyclic groups $\ZZ_{n}$}
\label{cyclic}
For the cyclic example, consider 
\begin{equation}\label{EjemploCiclico}
\eta_{n}=\frac{i(1+2 z^n)}{z(z^n-1)}\, dz.
\end{equation}
In this case the poles of $\eta_{n}$ are the $n$--th roots of unity, the origin and $\infty\in\CW$; 
the zeros of $\eta_{n}$ 
are the $n$--th roots of $-1/2$. Hence the conditions of Theorems \ref{Z2}.A and \ref{main3b} are satisfied with 
$\ell=1$, thus $\eta_{n}$ is $\ZZ_{n}$--invariant.

A generator $T\in G\cong\ZZ_{n}$ that leaves invariant $\eta_{n}$ also fixes $0,\infty\in\CW$, hence 
$T(z)=\e^{2 i \pi/n} z$. However, $\eta_{n}$ is not invariant under $T_2(z)=1/z$. 
In fact $T_{2*}\eta_{n}=\frac{i(2+ z^n)}{z(z^n-1)}\neq \eta_{n}$. This shows that $\eta_{n}$  
is not invariant under a group isomorphic to $\DD_{m}$ for $m\in\NN$.
Moreover, it is clear that there is no $\widehat{G}\cong\ZZ_{m}$, $m> n$, such that $\eta_{n}$ is 
$\widehat{G}$--invariant.

\noindent
Finally since the group structure of $A_{5}$, $S_{4}$ and $A_{4}$ does not contain a subgroup isomorphic to 
$\ZZ_{n}$ for $n\neq 2, 4, 5$, then for these values of $n$ the rational 1--form $\eta_{n}$ can not be invariant 
under a group isomorphic to $A_{5}$, $S_{4}$ or $A_{4}$. 

\noindent
In the case of $n=2, 4, 5$, even though each group $\widehat{G}$ isomorphic to $A_{5}, S_{4}$ or $A_{4}$ 
does contain a subgroup isomorphic to $\ZZ_{n}$ 
(for $n=2, 4, 5$) it factors through a dihedric $\DD_{n}$ on its way to $\ZZ_{n}$, so if $\eta_{n}$ is 
$\widehat{G}$--invariant, it would also have to be $\DD_{n}$--invariant which has already been shown to 
not occur for $\eta_{n}$. 

\noindent
Thus $\eta_{n}$ has isotropy group $G\cong\ZZ_{n}$.

\noindent
To see that the phase portrait of the associated field is isochronous we verify that all residues of $\eta$ are  
real multiples of $\lambda$. Hence for $\lambda$ purely imaginary $\eta$ is isochronous.
As examples see Figures \ref{ciclico2} and \ref{ciclico5} that correspond to the cases $n=2$ and $n=5$ 
respectively.

\begin{figure}[htbp]
\begin{center}
\includegraphics[width=.4\textwidth]{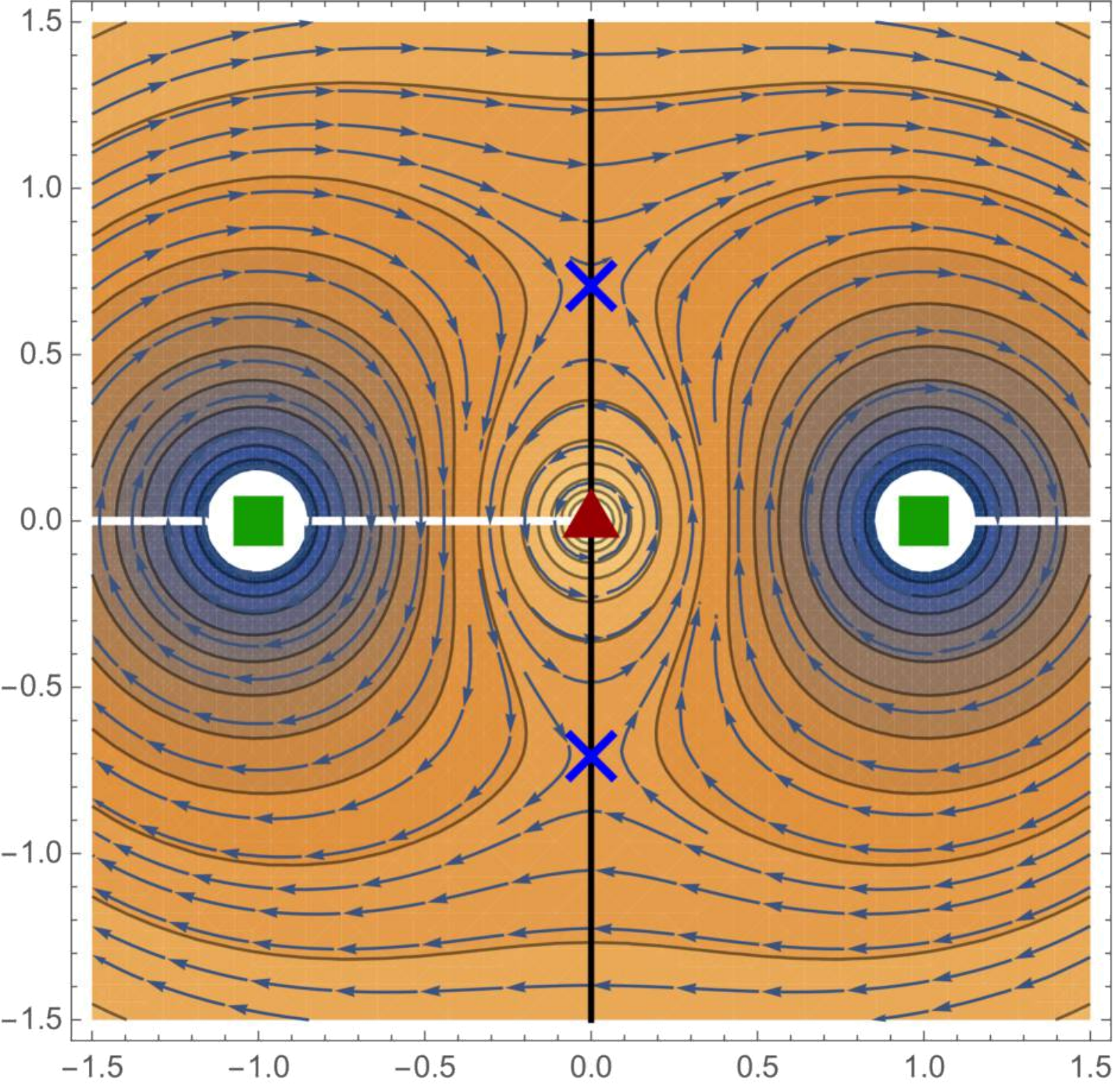}
\\
\includegraphics[width=.45\textwidth]{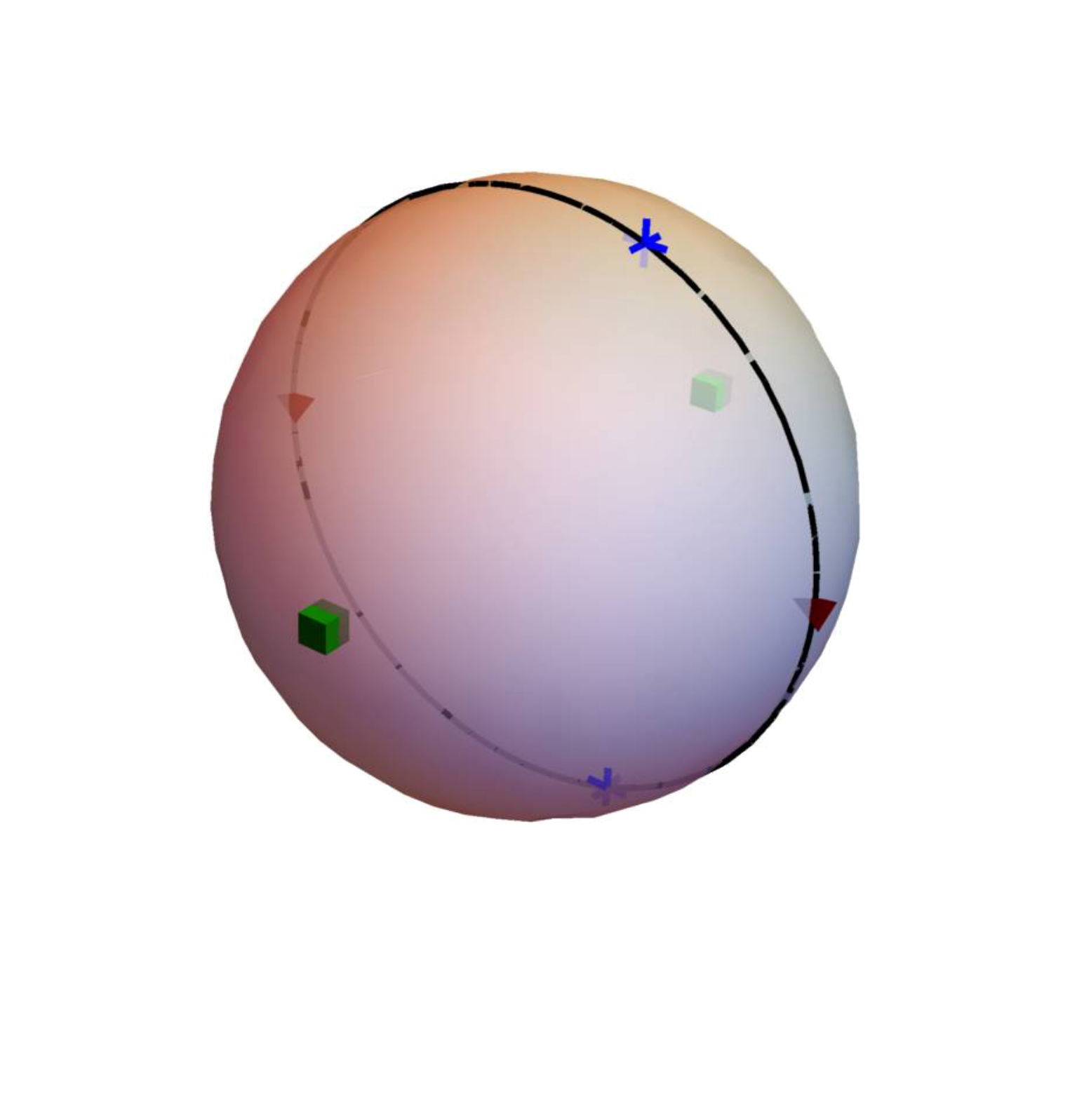}
\hskip 16pt
\includegraphics[width=.4\textwidth]{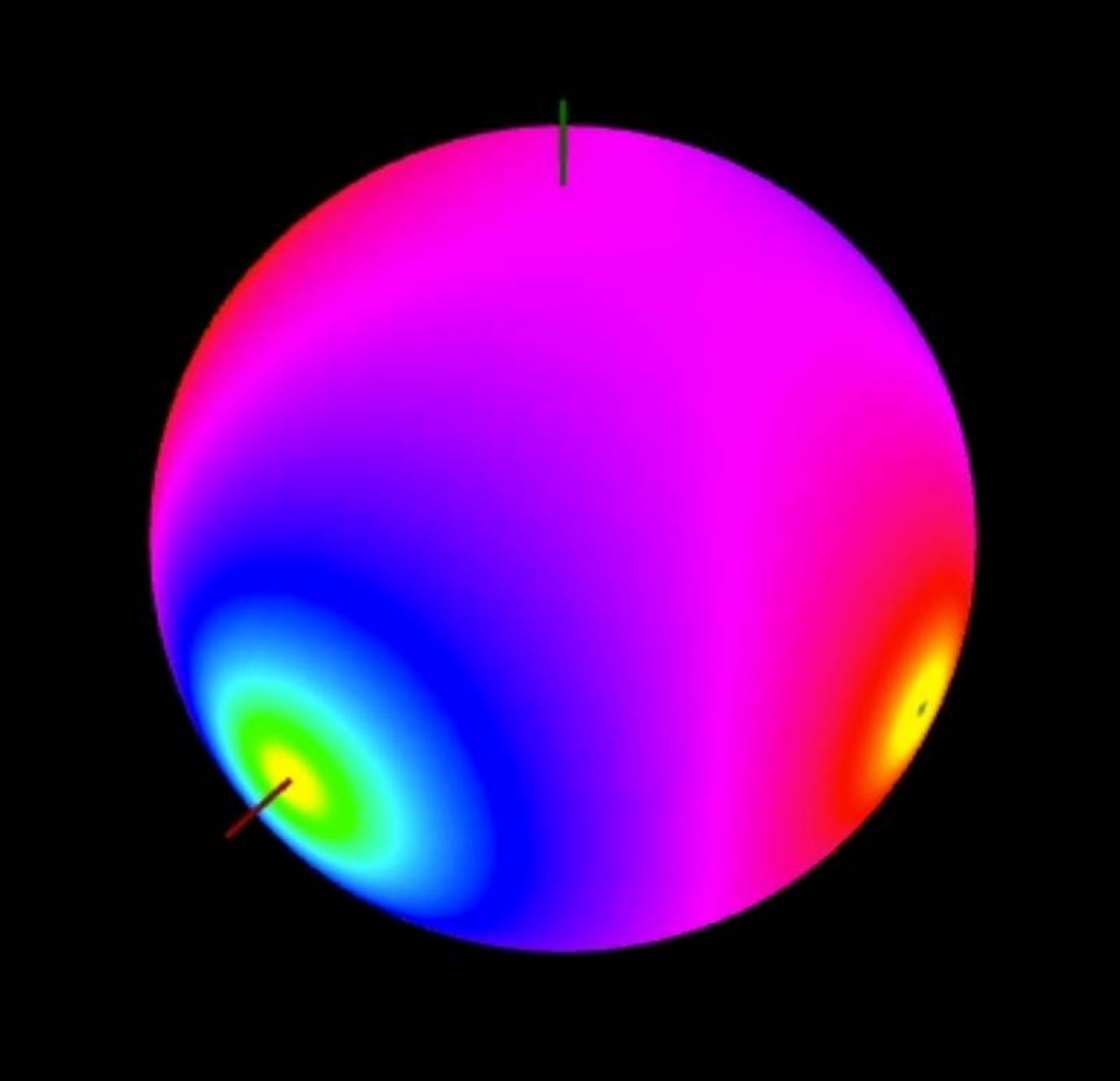}
\caption{Phase portrait of the field associated to the cyclic 1--form \eqref{EjemploCiclico} with $n=2$.
Thus we have the isotropy group isomorphic to $\ZZ_{2}$. In this case $\lambda= i$ so that the 
poles (zeros of the corresponding field) are centers. 
The poles placed at the vertices appear in the top and left bottom figure as (red) triangles at $0, \infty\in\CW$ and in the bottom right hand figure as the center of the small light colored disks (yellow surrounded by red), while the poles placed at the centers of the faces appear in the top and left bottom figure as (green) squares at $-1, 1$ and in the bottom right hand figure as the centers of the large light colored concentric annular regions (yellow surrounded by green and blue). The zeros appear as (blue) crosses on the top and left bottom figures.
The restriction for the placement of the poles and zeros on $\CC^{*}$ is that one can place them on a quasi--fundamental region in such a way that they are not on the same concentric circle centered at the origin.}
\label{ciclico2}
\end{center}
\end{figure}

\begin{figure}[htbp]
\begin{center}
\includegraphics[width=.4\textwidth]{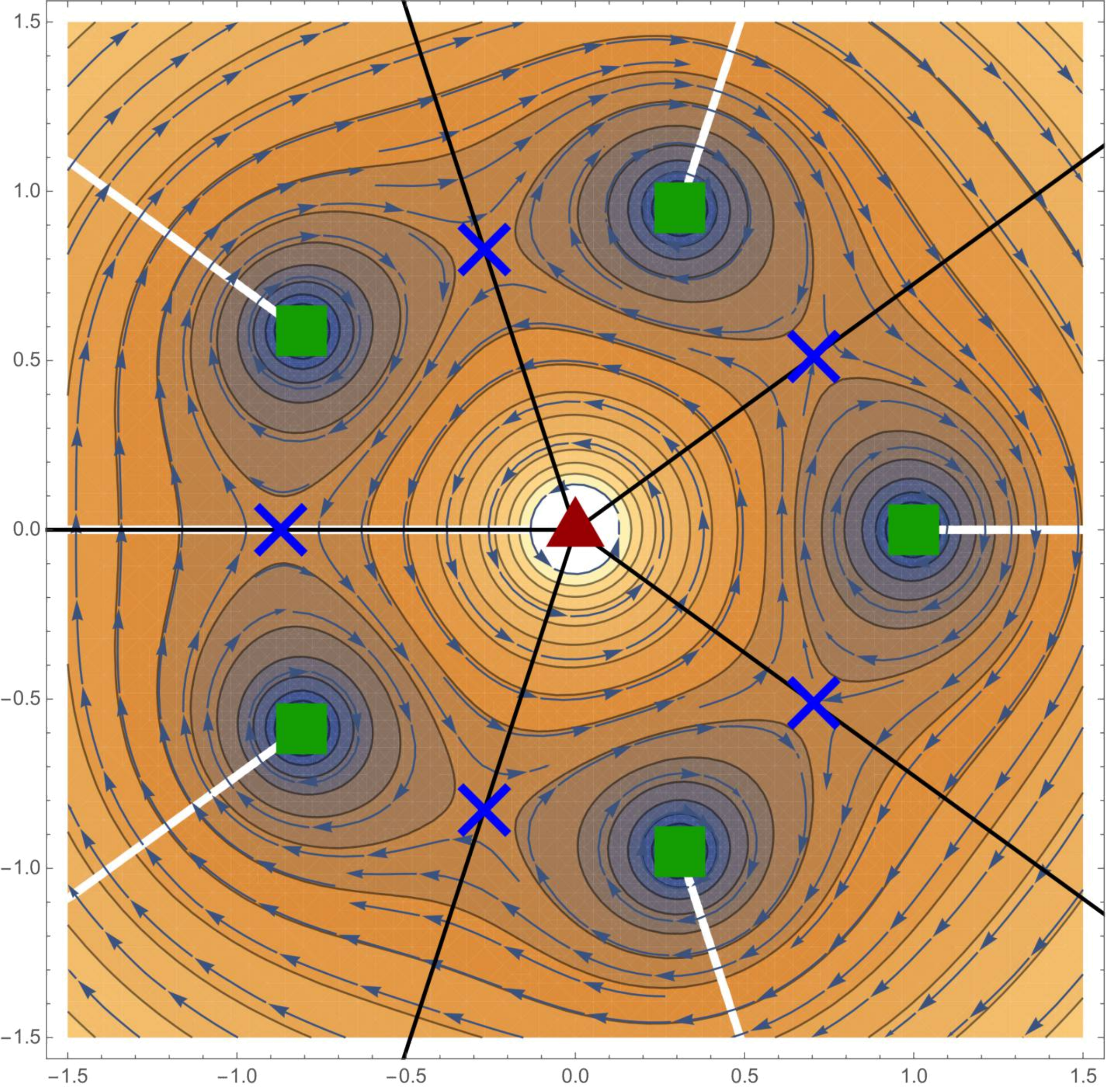}
\\
\includegraphics[width=.4\textwidth]{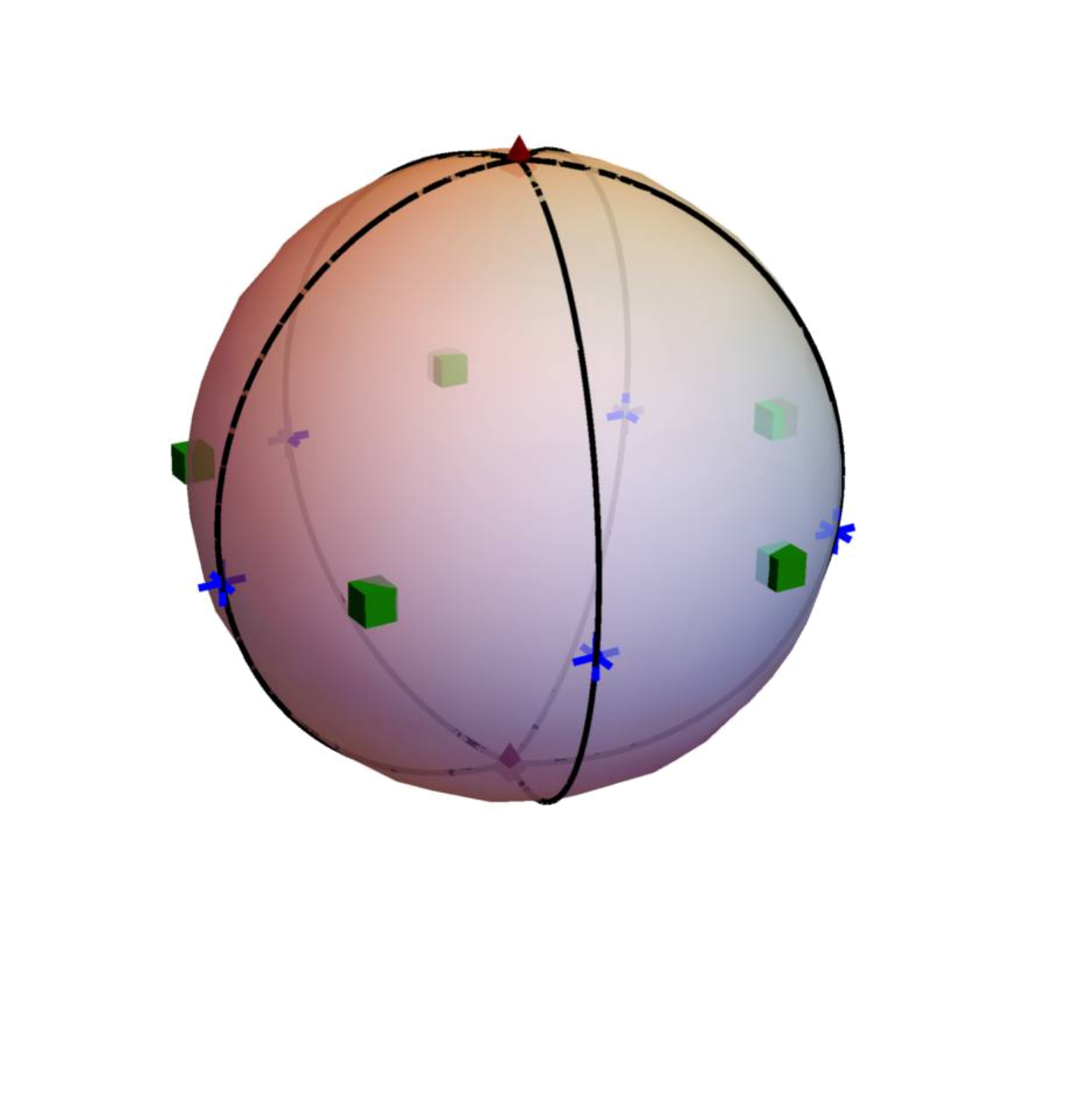}
\hskip 16pt
\includegraphics[width=.4\textwidth]{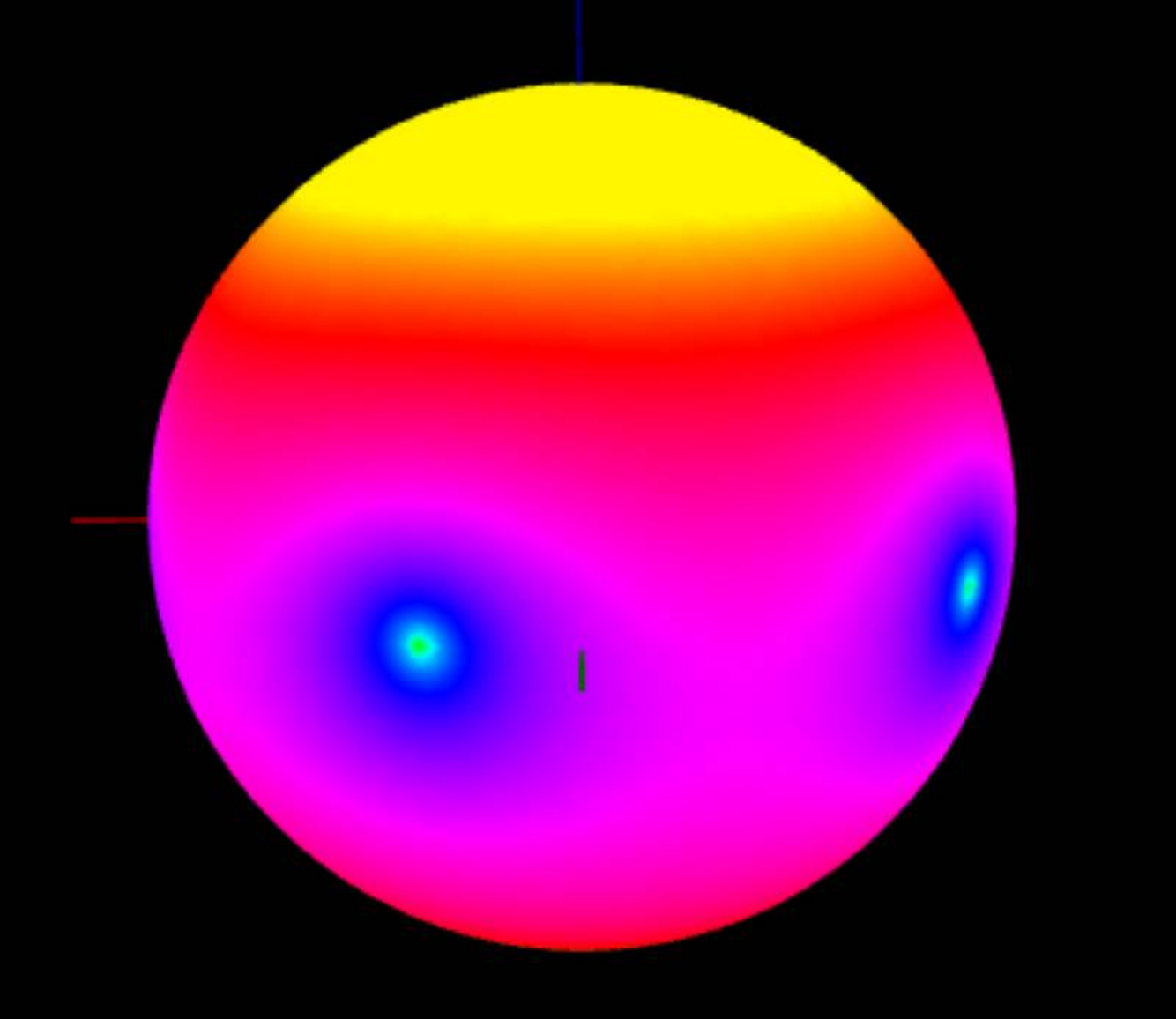}
\caption{Phase portrait of the field associated to the cyclic 1--form \eqref{EjemploCiclico} with $n=5$.
Thus we have the isotropy group isomorphic to $\ZZ_{5}$. Once again $\lambda= -i$ so that the
poles (zeros of the corresponding field) are centers. 
On the bottom right figure, the poles placed at the vertices appear as the centers of the large lighter colored disks (yellow), while the poles placed at the centers of the faces appear as the centers of the small darker colored concentric annular regions (blue) and the zeros are the saddles that are on the edges (but not on the center of the edges).
In the other two figures the vertices are the (red) triangles which correspond to the poles at the origin and $\infty\in\CW$, the (blue) crosses are zeros placed on the edges (but not on the centers of the edges) and the (green) squares are poles placed on the quasi--fundamental region with the restriction that they are not on the same parallel as the zeros (in this case these poles are placed on the center of the faces).}
\label{ciclico5}
\end{center}
\end{figure}


\end{document}